\documentclass[sn-mathphys,Numbered]{sn-jnl}


\usepackage{graphicx}%
\usepackage{multirow}%
\usepackage{amsmath,amssymb,amsfonts}%
\usepackage{amsthm}%
\usepackage{mathrsfs}%
\usepackage[title]{appendix}%
\usepackage{xcolor}%
\usepackage{textcomp}%
\usepackage{manyfoot}%
\usepackage{booktabs}%
\usepackage{algorithm}%
\usepackage{algorithmicx}%
\usepackage{algpseudocode}%
\usepackage{listings}%

\usepackage{mathtools} 
\mathtoolsset{showonlyrefs=true}




\theoremstyle{thmstyleone}%
\newtheorem{theorem}{Theorem}
\newtheorem{proposition}[theorem]{Proposition}%

\theoremstyle{thmstyletwo}%
\newtheorem{remark}{Remark}%

\theoremstyle{thmstylethree}%
%

\newtheorem{assumption}{Assumption}
\newtheorem{corollary}{Corollary}
\newtheorem{lemma}{Lemma}
\newcommand{\tr}{\mathrm{tr}\,}
\newcommand{\pa}{\partial}
\newcommand{\e}{\mathrm{e}}
\newcommand{\I}{\mathrm{i}}
\newcommand{\Z}{\mathbb{Z}}
\newcommand{\R}{\mathbb{R}}
\newcommand{\bs}[1]{\boldsymbol{#1}}

\raggedbottom

\begin{document}

\title[A new instability framework in 2-component reaction-diffusion systems]{A new instability framework in 2-component reaction-diffusion systems}


\author[1]{\fnm{Hirofumi} \sur{Izuhara}}\email{izuhara@cc.miyazaki-u.ac.jp}

\author*[1,2]{\fnm{Shunusuke} \sur{Kobayashi}}\email{s.kobayashi@cc.miyazaki-u.ac.jp}


\affil[1]{Faculty of Engineering, University of Miyazaki, 1-1 Gakuen Kibanadainishi Miyazaki, 889--2192, Japan}

\affil[2]{RIKEN iTHEMS, 2-1 Hirosawa, Wako-shi, Saitama 351--0198, Japan}



\abstract{This paper concerns pattern formation in 2-component reaction-diffusion systems with linear diffusion terms and a local interaction. 
We propose a new instability framework with 0-mode Hopf instability, $\textit{m}$ and $\textit{m}$ + 1 mode Turing instabilities in 2-component reaction-diffusion systems. 
The normal form for the codimension 3 bifurcation is derived via the center manifold reduction, which is one of the main results in the present paper. 
We also show numerical results on bifurcation of some reaction-diffusion systems and on a chaotic behavior of the normal form.}

\keywords{Reaction-diffusion system, Pattern formation, Bifurcation analysis, Normal form}


\pacs[MSC Classification]{35K57, 35B36, 35B32, 37L10}

\maketitle

\section{Introduction}\label{sec:intro}
Reaction-diffusion systems have been widely studied to understand self-organized pattern formation phenomena arising in nature, particularly biology, chemistry and physics (for instance \cite{M1982,M2002,KM}). 
The studies on pattern formation using reaction-diffusion systems are still now spreading further. 
The reason why reaction-diffusion systems attract the researchers is probably because they include a plain pattern forming mechanism called the diffusion-induced instability (Turing instability). 
As we briefly explain later, reaction-diffusion systems which are contained in a kind of framework provide that a suitable difference of diffusion coefficients can cause the destabilization of a spatially homogeneous steady state. 
This may look like a paradox since it seems that the diffusion effect promotes spatial homogeneity. 
Consequently, spatial structures called patterns appear after the instability of the spatially homogeneous state. 
Theoretically, the patterns can be captured as a bifurcation from the spatially homogeneous steady state. 
The Turing instability is typically explained for 2-component reaction-diffusion systems, 
but this idea can be systematically extended into 3 or more component reaction-diffusion systems (\cite{ASY}). 
Since $n$ components ($n\geq 3$) can represent a variety of reaction terms, 
we can expect that the dynamics of $n$-component reaction-diffusion systems is richer than that of 2-component systems from the pattern formation point of view (for instance \cite{YE}). 

Recently, a relation between Turing instability and other instabilities is being revealed. 
In \cite{IMN,FIMU}, it is reported that the cross-diffusion induced instability, which is one of the instability mechanisms, can be regarded as the Turing instability. 
In other words, this means that a kind of nonlinear diffusion called cross-diffusion can be approximated by reaction-diffusion systems. 
Besides, it is also known that nonlocal dispersals which are described by integral terms can be approximated by reaction-diffusion systems (\cite{NTY}). 
The reaction-diffusion system approximation can be also possible for free boundary problems (\cite{IMW}). 
For the interested readers in reaction-diffusion system approximation, see the review paper \cite{INY}. 

In this paper, we consider pattern formation in simple 2-component reaction-diffusion systems which mean to possess ``linear'' diffusion terms and ``local'' interactions only, that is, the form in one-space dimension is as follows: 
\[
\begin{aligned}
\partial_t u&=D_u \partial_{xx} u+f(u,v), &\quad t>0, \, 0<x<L, \\
\partial_t v&=D_v \partial_{xx} v+g(u,v), &\quad t>0, \, 0<x<L, 
\end{aligned}
\]
where $u$ and $v$ are certain quantities, the parameters $D_u$ and $D_v$ are diffusion coefficients, and the functions $f$ and $g$ describe a local interaction between $u$ and $v$. 
We consider this system under the zero flux boundary conditions and one-space dimension throughout this paper. 
For this system, many mathematical results have been obtained from the viewpoint of Turing instability and application of the bifurcation theory. 
A seminal and classical result is a bifurcation from a simple eigenvalue (\cite{CR}). 
Applying this result to the above reaction-diffusion systems, we can find the existence and stability of non-constant stationary solutions which bifurcate from a constant steady state.
As we shall see below, when the two diffusion coefficients vary suitably, the 2-component reaction-diffusion system can possess a doubly degenerate point where two real eigenvalues simultaneously become zero. 
In the vicinity of the doubly degenerate point, 
bifurcation structures, which are obtained applying the Lyapunov-Schmidt reduction, are thoroughly discussed in \cite{FMN}. 
The derivation of a finite dimensional dynamical system around the doubly degenerate point by using the center manifold reduction is also done in \cite{AGH,IK}. 
Another codimension $2$ bifurcation observed in the 2-component systems is a bifurcation with single zero eigenvalue and a pair of purely imaginary eigenvalues (\cite{GH,DIR}). This type of bifurcation is often called the Hopf--Turing bifurcation. 
In addition, it is known that, out of scope here, 3-component reaction-diffusion systems have more complicated dynamics such as a codimension $3$ bifurcation with $0$:$1$:$2$ mode interaction and wave bifurcation (\cite{O,OO,KS}). 

As addressed above, there are several types of bifurcations in simple 2-component reaction-diffusion systems. 
Here we arise a question: is there any other complicated bifurcation type in simple 2-component reaction-diffusion systems? 
Can the systems exhibit more complicated bifurcation than codimension 2 bifurcation? 
In this paper, we propose a new framework for a codimension 3 bifurcation generating spatio-temporal complicated dynamics in simple 2-component reaction-diffusion systems. 
To our best knowledge, this framefork is universal in simple 2-component systems with the Turing instability, and shows the most complicated bifurcation in the simple systems, which is presented in detail in the next section. 
A key idea to generate a codimension 3 bifurcation in 2-component reaction-diffusion systems is to destabilize a homogeneous steady state in the sense of Hopf instability. 
Therefore, as a by-product of this idea, 
we can capture a bifurcation from a spatially homogeneous time periodic solution. 
This study can be regarded as a time periodic solution version of Turing instability. 
Instability from spatially homogeneous time periodic solution has been also considered. 
When a stable periodic solution exists in the sense of ODEs, 
Kuramoto and Maginu separately discuss the stability of the time periodic solution under the addition of diffusion terms (\cite{K,M}).
But their argument is done in the one-dimensional whole space. 
Similar instability on a finite interval under the zero flux boundary conditions is discussed in \cite{RM,KI}.




The structure of this paper is as follows: 
In the next section, we introduce our instability framework which is discussed in this paper. One can see that simple 2-component reaction-diffusion systems with the zero flux boundary conditions can typically generate a codimension 3 bifurcation with $0$-mode Hopf instability, $m$ and $m+1$-mode Turing instabilities. 
Therefore, this bifurcation may be referred to as Hopf--Turing--Turing bifurcation in the present paper. 
In Section~\ref{Numerics}, we show some numerical bifurcation results around the codimension 3 bifurcation point with an aid of a numerical bifurcation Matlab package pde2path. In particular, we can find bifurcation structures from a spatially homogeneous time periodic solution. 
Section~\ref{CMR} is devoted to the rigorous analysis of the bifurcation. We derive a normal form of the codimension 3 bifurcation via the center manifold theory, and investigate possible bifurcation structures in the vicinity of the bifurcation point. This analysis extends the previous studies, and potentially suggests the existence of chaotic solutions. 
Finally, we give concluding remarks in Section~\ref{sec:Conclude}.


\section{A framework for Hopf--Turing--Turing bifurcation}\label{HTT}
We first briefly explain the Turing instability in Section~\ref{T-insta}, and then introduce our instability framework for simple 2-component reaction-diffusion systems in Section~\ref{H-insta}.  

\subsection{Turing instability}\label{T-insta}
We briefly explain the diffusion induced instability. 
We first consider the following system of ordinary differential equations: 
\[
\begin{aligned}
u'&=f(u,v),\\
v'&=g(u,v),
\end{aligned}
\]
where the functions $f$ and $g$ are smooth. 
In this system, we assume that there is an equilibrium $(\tilde{u}, \tilde{v})$ and it is linearly stable. Namely, the linearized matrix
\[
M:=
\begin{pmatrix}
f_u(\tilde{u}, \tilde{v}) & f_v(\tilde{u}, \tilde{v})\\
g_u(\tilde{u}, \tilde{v}) & g_v(\tilde{u}, \tilde{v})
\end{pmatrix}
\]
possesses two eigenvalues with negative real part, thus $\det M = f_u g_v-f_v g_u>0$ and $\tr M = f_u+g_v<0$. 
Here and hereafter, we put $f_u = \pa f (\tilde u, \tilde v)/\pa u$ and so forth.
Under this setting, we add the diffusion terms for each equation as follows:
\begin{equation}
\label{2rd-original}
\begin{aligned}
\partial_t u&=D_u \partial_{xx} u+f(u,v) & \text{for }t>0, \, 0<x<L,\\
\partial_t v&=D_v \partial_{xx} v+g(u,v) & \text{for }t>0, \, 0<x<L,
\end{aligned}
\end{equation}
When the zero flux boundary conditions 
\begin{equation}\label{BC}
\partial_x u(t,0)=\partial_x u(t,L)=\partial_x v(t,0)=\partial_x v(t,L)=0\quad \text{for }t>0
\end{equation}
are imposed, the linear stability analysis shows that 
the constant stationary solution $(\tilde{u}, \tilde{v})$ can be destabilized. Indeed, the linearized operator around $(\tilde{u}, \tilde{v})$ is given by 
\[
\begin{pmatrix}
D_u\partial_{xx}+f_u & f_v\\
g_u & D_v \partial_{xx}+g_v
\end{pmatrix}
\]
and for each Fourier cosine mode $n$, 
\[
\tilde{M}_n:=
\begin{pmatrix}
-D_u\left(\frac{n\pi}{L}\right)^2+f_u & f_v\\
g_u & -D_v\left(\frac{n\pi}{L}\right)^2+g_v
\end{pmatrix}
\]
is obtained. Here, obviously
\[
\text{tr}\tilde{M}_n=-(D_u+D_v)\left(\frac{n\pi}{L}\right)^2+f_u+g_v<0,
\]
on the other hand, 
\[
\text{det}\tilde{M}_n=\left( D_u\left(\frac{n\pi}{L}\right)^2-f_u \right)\left( D_v\left(\frac{n\pi}{L}\right)^2-g_v \right)-f_v g_u
\]
can be negative if $D_v>D_u$, $f_u>0>g_v$, and $f_v$ and $g_u$ have different signs. 
If $f_u>0>g_v$ and $g_u>0>f_v$, such reaction-diffusion systems are referred to as an activator-inhibitor system, on the other hand, 
if $f_u>0>g_v$ and $f_v>0>g_u$, they are called a substrate-depleted system. 
Fig.~\ref{NSC} shows neutral stability curves for a reaction-diffusion system with 
\(
M=
\begin{pmatrix}
\frac{9}{11} & \frac{121}{100}\\
-\frac{20}{11} & -\frac{121}{100}
\end{pmatrix}
\), where $\text{tr}M=-\frac{431}{1100}<0$ and $\text{det}M=\frac{121}{100}>0$. 
\begin{figure}[htbp]
\begin{center}
\includegraphics[width=80mm]{./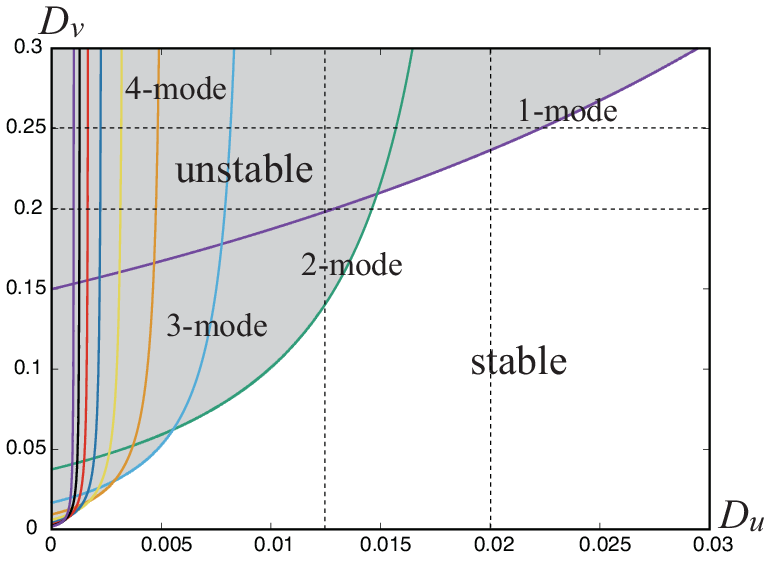}
\caption{Neutral stability curves for a reaction-diffusion system. The gray color means unstable region of $(u,v)=(0,0)$. The horizontal axis and the vertical one mean $D_u$ and $D_v$, respectively. }
\label{NSC}
\end{center}
\end{figure}
This figure implies that the diffusion-induced instability occurs by choosing suitable values of $D_u$ and $D_v$. 
Each curve corresponds to ${\rm det}\tilde{M}_n=0$ ($n=1,2,3,\cdots$). 
Moreover, doubly degenerate points where two distinct modes are simultaneously destabilized are observed. 
At these points, we can see a codimension 2 bifurcation. 
Our question in this paper is as follows: 
is there any other onset of complicated dynamics in 2-component simple reaction-diffusion systems which mean that the systems only possess ``linear'' diffusion and ``local'' reaction interaction on $f$ and $g$?  
Can we find an organizing center which generates complicated dynamics in simple reaction-diffusion systems?
In the present paper, we will answer this question and derive a finite dimensional dynamical system via the center manifold theory, which may show complicated behaviors of solutions such as a chaotic behavior.

\subsection{Hopf instability of $0$-mode}\label{H-insta}

In addition to the doubly degenerate points, the Hopf instability of 0-Fourier mode can arise. 
Therefore, 2-component reaction-diffusion systems can exhibit a simultaneously multiple bifurcation with $0$-mode Hopf, $m$-mode Turing and $m+1$-mode Turing instabilities. 

To give rise to the Hopf instability of $0$-mode, we introduce a positive parameter $\alpha$ into the reaction-diffusion system \eqref{2rd-original} as follows: 
\begin{equation}
\label{2rd}
\begin{aligned}
\partial_t u&=D_u \partial_{xx} u+f(u,v) & \text{for }t>0, \, 0<x<L,\\
\partial_t v&=\alpha\left( D_v \partial_{xx} v+g(u,v) \right) & \text{for }t>0, \, 0<x<L,
\end{aligned}
\end{equation}
where the parameter $\alpha$ can also be regarded as time constant. Under the zero flux boundary conditions \eqref{BC}, to unveil the effect of the parameter $\alpha$, we linearize \eqref{2rd} around the constant solution $(\tilde{u},\tilde{v})$, similarly to the previous section. 
Then, the linearized operator is provided as 
\[
\begin{pmatrix}
D_u\partial_{xx}+f_u & f_v\\
\alpha g_u & \alpha\left( D_v \partial_{xx}+g_v\right)
\end{pmatrix}
\]
and for each Fourier cosine mode $n$, we have 
\[
M_{n,\alpha}:=
\begin{pmatrix}
-D_u\left(\frac{n\pi}{L}\right)^2+f_u & f_v\\
\alpha g_u & \alpha\left(-D_v\left(\frac{n\pi}{L}\right)^2+g_v\right)
\end{pmatrix}.
\]
Here, we can see that for any integer $n\geq0$
\[
\begin{aligned}
\tr M_{n,\alpha} &= -(D_u+\alpha D_v)\left(\frac{n\pi}{L}\right)^2+f_u+\alpha g_v,\\
\det M_{n,\alpha} &=\alpha \text{det}\tilde{M}_{n}. 
\end{aligned}
\]
Note that for any $\alpha>0$, ${\rm det}M_{n,\alpha}$ and ${\rm det}\tilde{M}_{n}$ are the same sign. 
On the other hand, when we focus on $n=0$, the trace 
\(
\tr M_{0,\alpha} = f_u+\alpha g_v
\)
can be set as zero by adjusting the value $\alpha$ since $f_u$ and $g_v$ have different signs. 
Since $\det M_{0,\alpha}=\alpha \det \tilde{M}_{0}>0$, these imply that $0$-mode can be destabilized by virtue of the Hopf instability. 
Actually, in the previous example 
\(
M_{0,\alpha}=
\begin{pmatrix}
\frac{9}{11} & \frac{121}{100}\\
-\frac{20}{11}\alpha & -\frac{121}{100}\alpha
\end{pmatrix}
\), 
since $\tr M_{0,\alpha}=\frac{9}{11}-\frac{121}{100}\alpha$, 
we expect that 0-mode can be destabilized at $\alpha=\frac{900}{1331}\approx 0.67618332$, therefore an oscillatory behavior of uniform state may be exhibited if $\alpha$ is smaller than the value. 
Here, we note that this $0$-mode Hopf instability is independent of the Turing instability mentioned above. 
Therefore, for instance, we can select a triplet $(D_u^*,D_v^*,\alpha^*)$ satisfying 
$\tr M_{0,\alpha^*}=0$, $\det M_{0,\alpha^*}>0$, 
$\tr M_{1,\alpha^*}<0$, $\det M_{1,\alpha^*}=0$, 
$\tr M_{2,\alpha^*}<0$, $\det M_{2,\alpha^*}=0$, 
where the triplet is a triply degenerate point in the sense of $0$-mode Hopf, $1$-mode Turing and $2$-mode Turing instabilities. 
In the next section, we show some numerical bifurcation diagrams in the vicinity of the triply degenerate point for several 2-component reaction-diffusion systems.

\section{Numerical bifurcation analysis}\label{Numerics}

In this section, we numerically compute bifurcation diagrams for 2-component reaction-diffusion systems with an aid of pde2path (\cite{U-2019,U-2021,U-2022,UWR}) which is a numerical Matlab package for bifurcation analysis. 
In any cases, we set $L=1$ without loss of generality and impose the zero flux boundary conditions $u_x=v_x=0$ at $x=0$ and $1$. 

We here focus on bifurcations from a time periodic solution with spatial homogeneity. 
In each bifurcation diagram, we use the following norm for a solution $u$ in the vertical axis: 
\[
\|u\|_*=
\begin{cases}
\frac{1}{\sqrt{|L|}}\|u\|_{L^2(0,L)} & \text{for stationary solutions},\\
\frac{1}{\sqrt{T|L|}}\|u\|_{L^2((0,L)\times(0,T))} & \text{for time periodic solutions with period $T$}.
\end{cases}
\]
In addition, we use the descriptions for bifurcation diagrams listed in Table~\ref{description} throughout this section. 

\begin{table}[h]
\caption{Descriptions for bifurcation diagrams.}\label{description}
\centering
\begin{tabular}{cl}
\hline
thick curve & stable branch\\
thin curve & unstable branch\\
cyan colored curve & constant stationary solution branch\\
blue colored curve & non-constant stationary solution branch\\
magenta colored curve &  spatially homogeneous time periodic solution branch\\
red colored curve & spatially inhomogeneous time periodic solution branch\\
\hline
\end{tabular}
\end{table}

We illustrate bifurcation diagrams of three types of 2-component reaction-diffusion systems, say the Schnakenberg model, the Mimura--Murray model and an artificial system which does not possess meanings as a mathematical model.

\subsection{Schnakenberg model}\label{Schnakenberg model}

We first consider the following Schnakenberg model with a parameter $\alpha$: 
\begin{equation}\label{Schnak}
\begin{aligned}
\partial_t u&=D_u \partial_{xx} u+A-u+u^2v,\\
\partial_t v&=\alpha \left( D_v \partial_{xx} v+B-u^2v \right).
\end{aligned}
\end{equation}
The parameter values are set as $A=0.1$ and $B=1.0$ in this subsection. 
When $\alpha=1$, \eqref{Schnak} is reduced to the well known Schnakenberg model. 
This system possesses a constant stationary solution $(u,v) = \left( A+B, \frac{B}{(A+B)^2} \right)$. 
In order to investigate the stability of the constant stationary solution, we linearize \eqref{Schnak} around the constant stationary solution and use the Fourier cosine expansion as discussed above.
As a result, neutral stability curves for \eqref{Schnak} are shown in the $(D_u,D_v)$-plane as in Fig.~\ref{NSC}. 
Besides, a simple calculation provides that Hopf instability of $0$-mode occurs if the parameter $\alpha$ satisfies $\frac{B-A}{A+B}-\alpha (A+B)^2=0$ which is denoted by $\alpha^*$, that is $\alpha^* \approx 0.676183\cdots$. 
If $\alpha>\alpha^*$, then $0$-mode is stable, on the other hand, if $\alpha<\alpha^*$, it is oscillatorily unstable. 

Based on the neutral stability curves in Fig.~\ref{NSC}, we numerically compute bifurcation diagrams when either $D_u$ or $D_v$ changes as a bifurcation parameter but $\alpha$ is fixed. 
Fig.~\ref{BD-1} shows a bifurcation diagram when $\alpha=0.63$ and $D_u=0.02$, where $D_v$ acts as a bifurcation parameter.  
\begin{figure}[htbp]
\begin{center}
\includegraphics[width=80mm]{./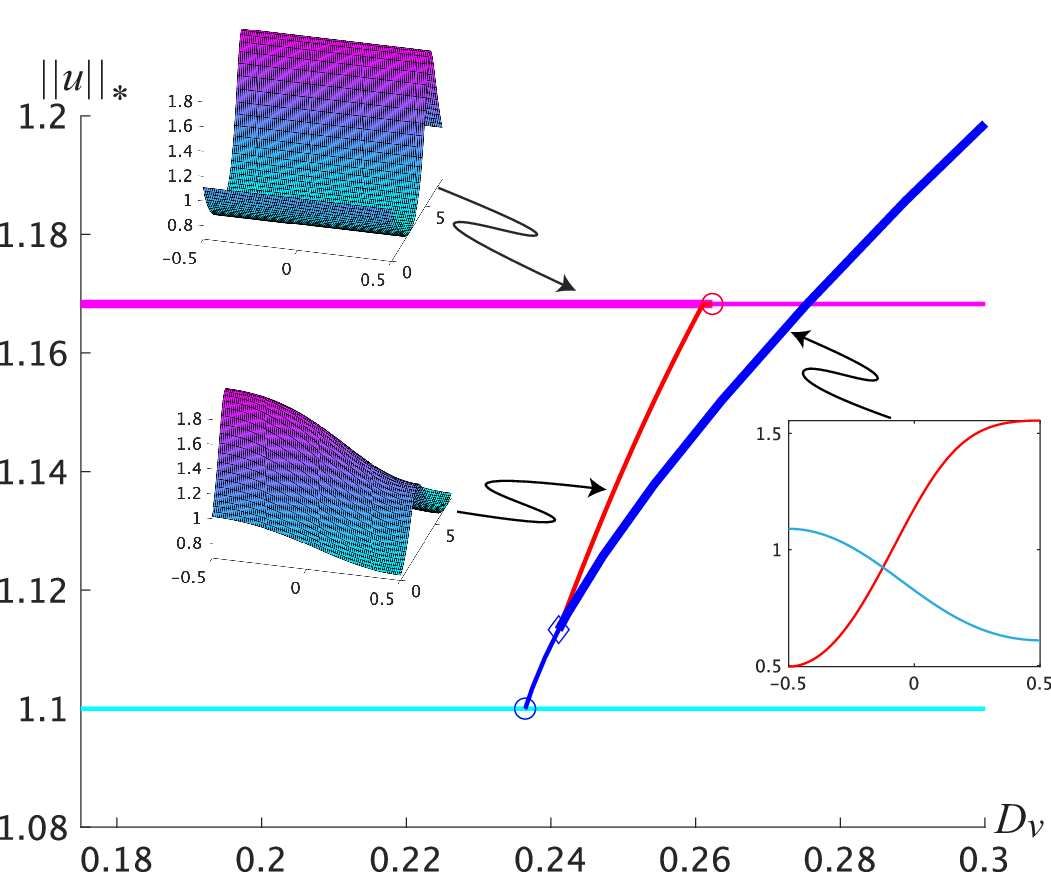}
\caption{Bifurcation diagram for \eqref{Schnak} when $\alpha=0.63$ and $D_u=0.02$. 
The horizontal axis and the vertical axis are respectively the bifurcation parameter $D_v$ and the norm $\|u\|_*$. 
The time periodic solutions in the figure indicate the profile $u$, and the stationary solution exhibits the solution profile at $D_v=0.275258$, where the red curve and the blue curve represent $u$ and $v$. }
\label{BD-1}
\end{center}
\end{figure}
\begin{figure}[htbp]
\begin{center}
\includegraphics[width=110mm]{./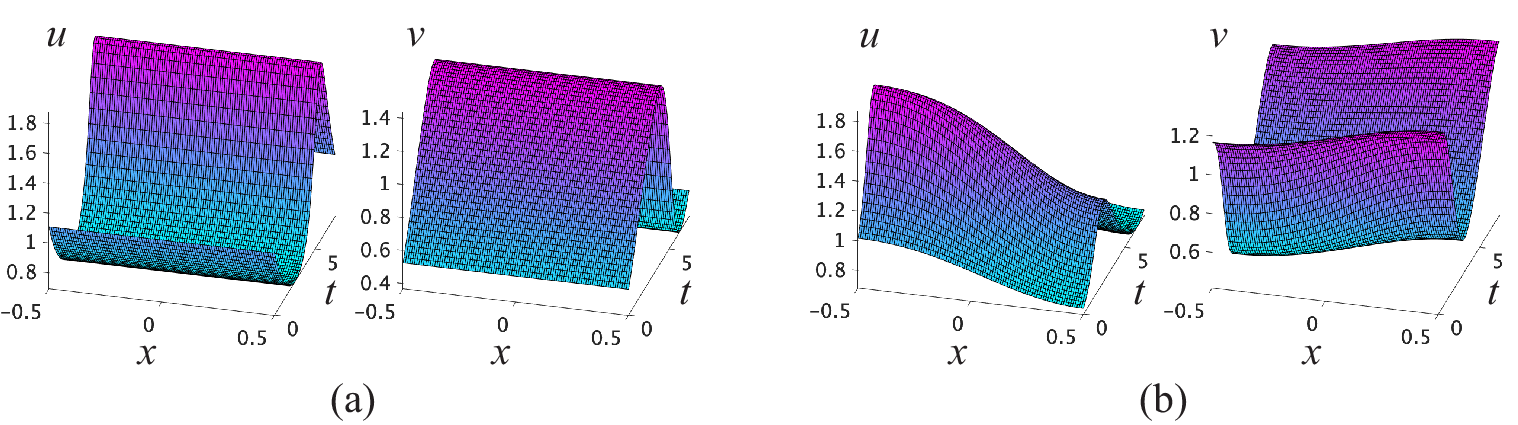}
\caption{(a) Spatially homogeneous time periodic solution with period $T=7.47676$. (b) Spatially inhomogeneous time periodic solution at $D_v=0.25044$. The period is $T=7.35746$.}
\label{profile-1}
\end{center}
\end{figure}
Since $\alpha=0.63$ in this case, the constant stationary solution $\left( A+B, \frac{B}{(A+B)^2} \right)$ (the thin cyan colored line) is unstable, thus we expect the existence of a spatially homogeneous time periodic solution. 
Actually, it exists (magenta colored line. See also Fig.~\ref{profile-1}(a).) and changes the stability according to the value of $D_v$, namely 
it is stable for $D_v<0.262314$ and unstable for $D_v>0.262314$. 
We can see that the time periodic solution is destabilized at $D_v=0.262314$ (marked with a circle) and a spatially inhomogeneous time periodic solution branch appears subcritically (the red colored curve). A typical profile of solution on this branch is exhibited in Fig.~\ref{profile-1}(b). 
This time periodic solution branch can also be regarded as a branch bifurcating from a Hopf bifurcation point at $D_v=0.241068$ (marked with a diamond) since this time periodic solution branch connects with a non-constant stationary solution branch (blue colored curve) which bifurcates from the constant stationary solution branch (cyan colored curve). 
Focusing on stable solutions in Fig.~\ref{BD-1}, we can see the spatially homogeneous time periodic solution for $D_v<0.262314$ and the non-constant stationary solutions for $D_v>0.241068$. 
In this parameter regime, there are spatially inhomogeneous time periodic solution, but they are unstable.


Fig.~\ref{BD-2} shows a bifurcation diagram of \eqref{Schnak} when $D_u=0.0125$ which is smaller than that of Fig.~\ref{BD-1} and the other parameters keep the same values.  
\begin{figure}[htbp]
\begin{center}
\includegraphics[width=80mm]{./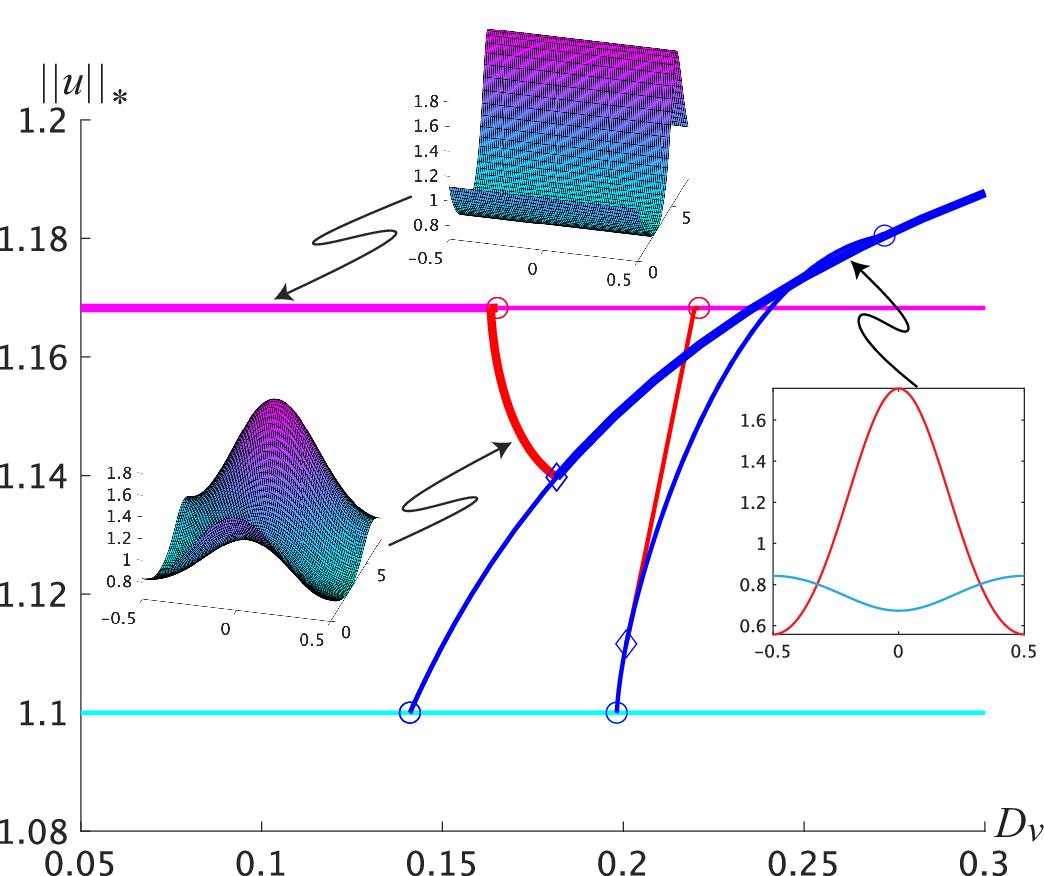}
\caption{
Bifurcation diagram for \eqref{Schnak} when $D_u=0.0125$. 
The horizontal axis and the vertical axis are respectively the bifurcation parameter $D_v$ and the norm $\|u\|_*$. 
The other parameter values are the same ones in Fig.~\ref{BD-1}. 
}
\label{BD-2}
\end{center}
\end{figure}
\begin{figure}[htbp]
\begin{center}
\includegraphics[width=52mm]{./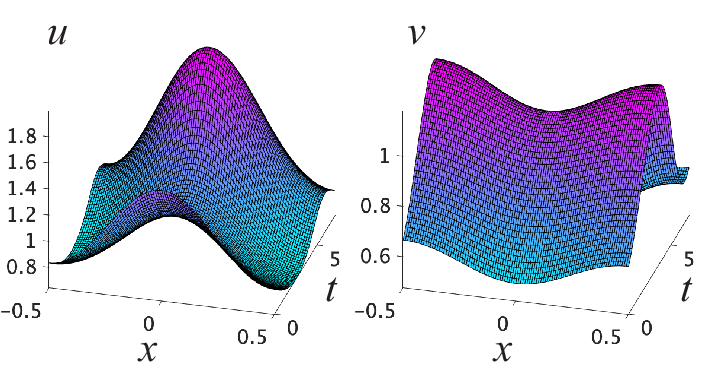}
\caption{Solution profile for $D_u=0.0125$ and $D_v=0.171085$. }
\label{hopfsol-1}
\end{center}
\end{figure}
One can see that the spatially homogeneous time periodic solution is destabilized at $D_v=0.165173$ and a spatially inhomogeneous time periodic solution branch supercritically bifurcates. Therefore, we see stable time periodic solutions with spatial heterogeneity for $0.165173<D_v<0.18156$ (see also Fig.~\ref{hopfsol-1}). 
This type of stable time periodic solution branch is never observed when $\alpha=1$. 
It seems that the oscillatory 0-mode yields this stable time periodic solution branch. 
As well as the previous case, it connects with the non-constant stationary solution branch at $D_v=0.18156$ (marked with a diamond). 
Additionally, another branch of time periodic solutions with spatial heterogeneity can be seen for $0.200817<D_v<0.220962$, but it is unstable. 

Next, we use $D_u$ as a bifurcation parameter and $D_v$ value is fixed suitably. 
We display a bifurcation diagram for \eqref{Schnak} when $D_v=0.25$ in Fig.~\ref{BD-3}. 
\begin{figure}[htbp]
\begin{center}
\includegraphics[width=80mm]{./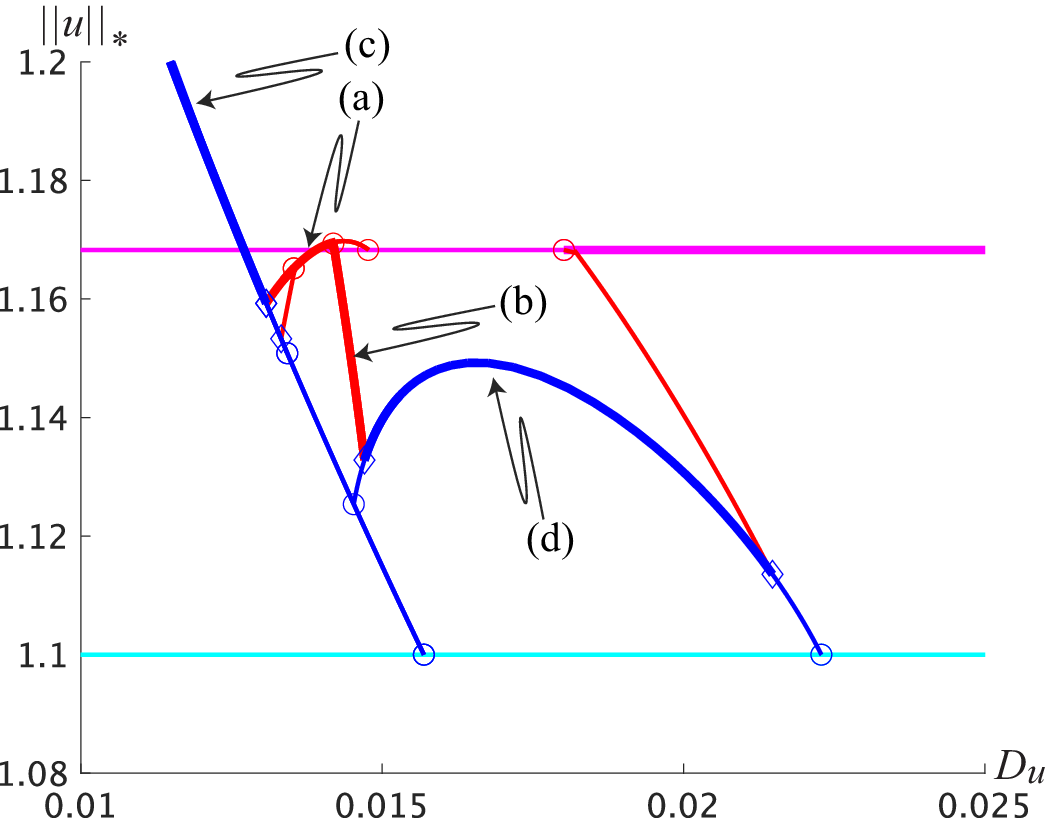}
\caption{Bifurcation diagram for \eqref{Schnak} when $D_v=0.25$. 
The horizontal axis and the vertical axis are respectively the bifurcation parameter $D_u$ and the norm $\|u\|_{*}$. 
The other parameter values are the same ones in Fig.~\ref{BD-1}.
The solution profiles at (a), (b), (c) and (d) in the figure are indicated in Fig.~\ref{hopfsol-3}. }
\label{BD-3}
\end{center}
\end{figure}
\begin{figure}[htbp]
\begin{center}
\includegraphics[width=110mm]{./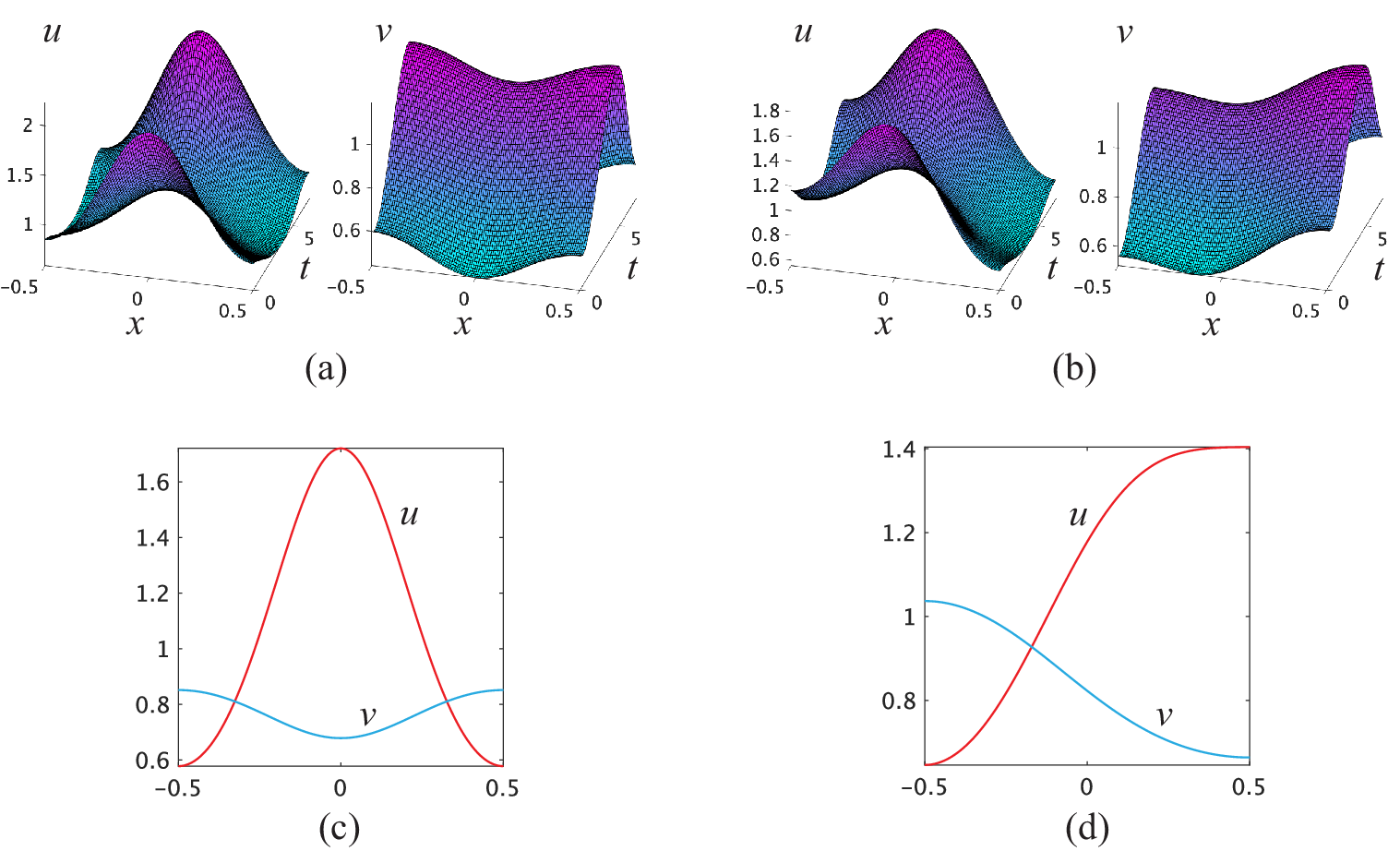}
\caption{
(a) Stable time periodic solution with spatially heterogeneity at $D_u=0.0137468$. 
(b) Stable time periodic solution with spatially heterogeneity at $D_u=0.0145548$. 
(c) Stable non-constant stationary solution at $D_u=0.012565$. 
(d) Stable non-constant stationary solution at $D_u=0.0195245$.}
\label{hopfsol-3}
\end{center}
\end{figure}
We find that the spatially homogeneous time periodic solution is stable for $D_u>0.018015$. 
Besides this time periodic solution, a time periodic solution with spatially heterogeneity in Fig.~\ref{hopfsol-3}(a), which looks like an oscillatory 2-mode solution, and a time periodic solution in Fig.~\ref{hopfsol-3}(b) which seems to an oscillatory $1:2$ mixed mode solution are also found. 
Stable periodic solutions like Fig.~\ref{hopfsol-3}(a) are observed for $0.0130752<D_u<0.0141866$ and Fig.~\ref{hopfsol-3}(b) type stable periodic solutions can be seen for $0.0141866<D_u<0.0147057$. Moreover, each time periodic solution branch connects to a different non-constant stationary solution branch whose profile is exhibited in Fig.~\ref{hopfsol-3}(c) and (d).

We show a bifurcation diagram in Fig.~\ref{BD-4} when $D_v=0.2$. 
\begin{figure}[htbp]
\begin{center}
\includegraphics[width=80mm]{./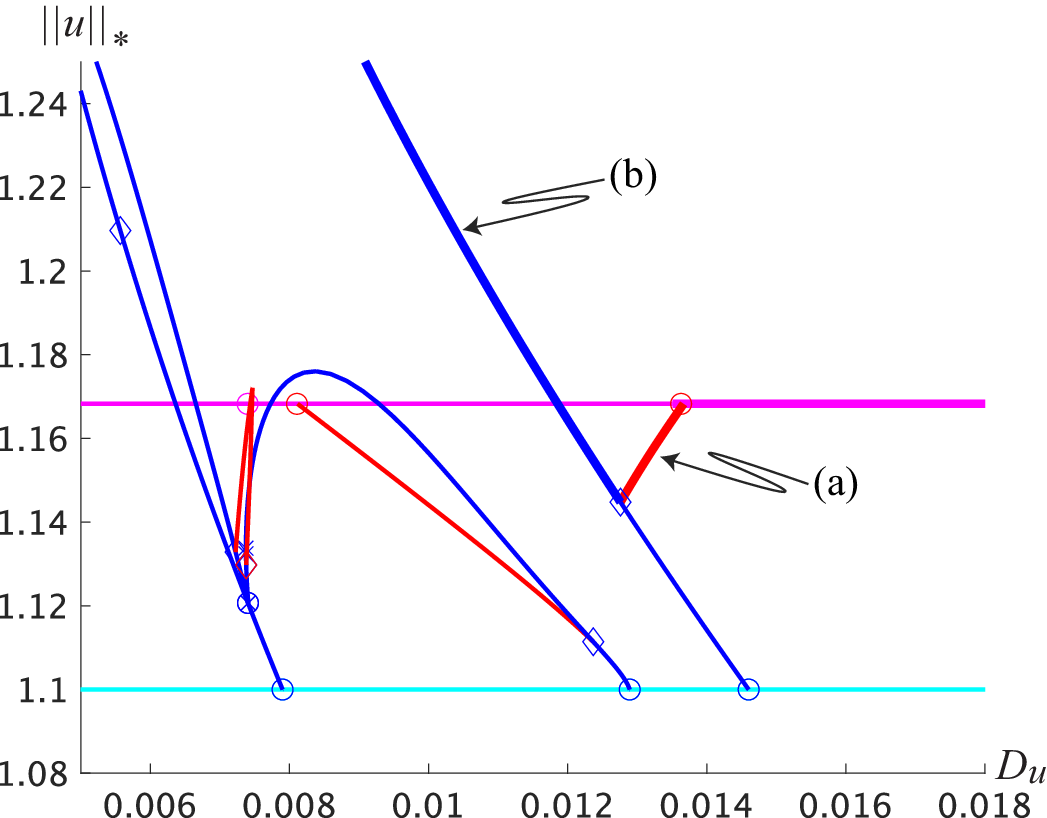}
\caption{Bifurcation diagram for \eqref{Schnak} when $D_v=0.2$. 
The horizontal axis and the vertical axis are respectively the bifurcation parameter $D_v$ and the norm $\|u\|_{*}$. 
The other parameter values are the same ones in Fig.~\ref{BD-1}. 
The solution profiles at (a) and (b) in the figure are indicated in Fig.~\ref{hopfsol-4}. }
\label{BD-4}
\end{center}
\end{figure}
\begin{figure}[htbp]
\begin{center}
\includegraphics[width=91mm]{./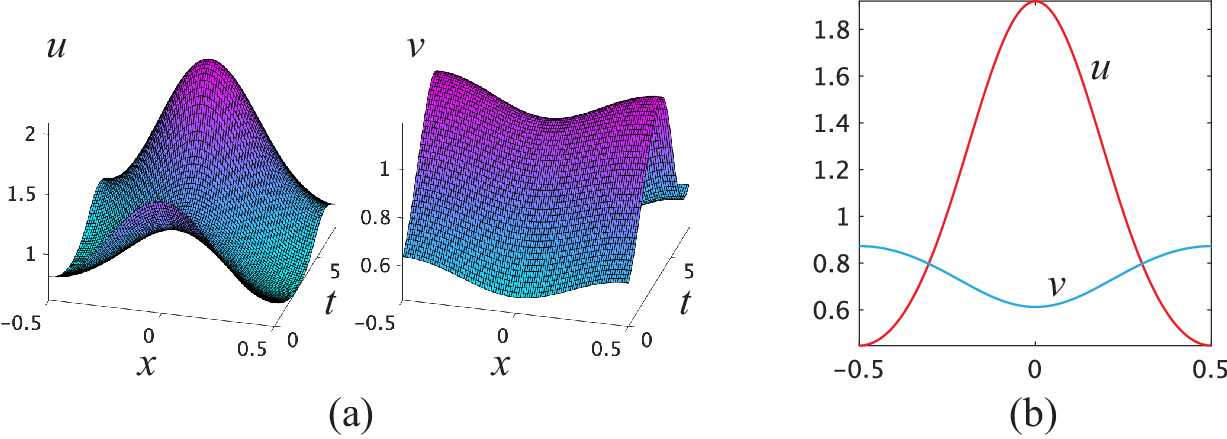}
\caption{
(a) Stable time periodic solution with spatially heterogeneity at $D_u=0.0131236$. 
(b) Stable non-constant stationary solution at $D_u=0.0101167$. }
\label{hopfsol-4}
\end{center}
\end{figure}
As shown in the figure, a stable time periodic solution branch with spatial heterogeneity primarily bifurcates from the spatially homogeneous time periodic solution at $D_u=0.0135106$ as the parameter value $D_u$ decreases. 
Therefore, we can see stable periodic solutions for $0.0127591<D_u<0.0135106$ like Fig.~\ref{hopfsol-4}(a). 
For $D_u<0.0127591$, non-constant stationary solutions exist stably, whose profiles are like Fig.~\ref{hopfsol-4}(b). 
When $D_u$ is small, some branches appear from the spatially homogeneous time periodic solution branch and a complicated bifurcation structure is observed but they all are unstable. 

Compared with Figs.~\ref{BD-1} and \ref{BD-2}, Figs.~\ref{BD-3} and \ref{BD-4} show rather complicated structures. 
As $D_u$ value decreases, infinitely many Fourier modes are destabilized one after another as in Fig.~\ref{NSC}. 
It seems that this tendency influences the complexity of the bifurcation diagram. 

\subsection{Mimura--Murray model}\label{Mimura-Murray model}

Mimura--Murray model was proposed in \cite{MM} to describe a pattern forming process of population densities of prey and predator. 
It is known that this model is classified into the activator-inhibitor systems. 
Mimura--Murray model with a parameter $\alpha$ is written as 
\begin{equation}\label{Mimura-Murray}
\begin{aligned}
\partial_t u&=D_u \partial_{xx} u+\left( \left( a+bu-u^2 \right)/c-v \right)u,\\
\partial_t v&=\alpha \left\{ D_v \partial_{xx} v+\left( u-(1+dv) \right)v, \right\}
\end{aligned}
\end{equation}
and the zero flux boundary conditions are imposed as well as in Section~\ref{Schnakenberg model}. 
Here, we use the following parameter values: 
\[
a=35,\quad b=16, \quad c=9, \quad d=0.4.
\]
Then, the constant stationary solution is given by $(u,v)=(5,10)$ which is independent of $D_u$, $D_v$ and $\alpha$. 
The neutral stability curves on the $(D_u, D_v)$ plane are shown in Fig.~\ref{NSC-2}.
\begin{figure}[htbp]
\begin{center}
\includegraphics[width=80mm]{./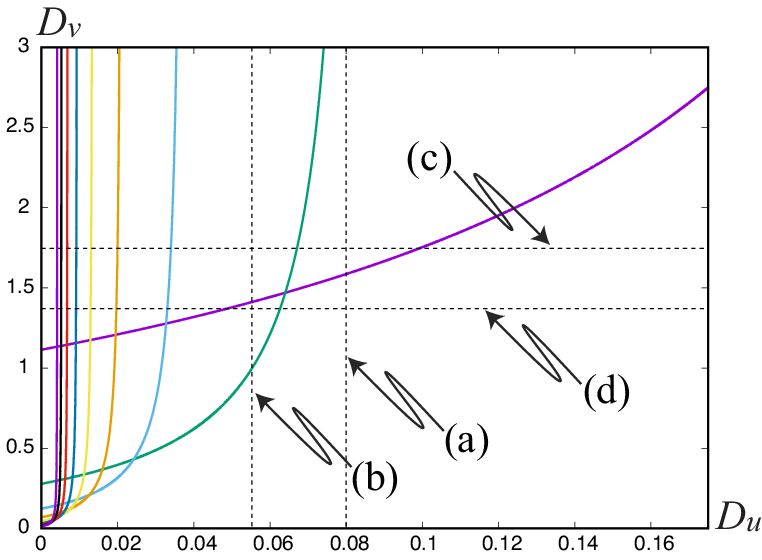}
\caption{Neutral stability curves for the Mimura--Murray model. 
The horizontal axis and the vertical one mean $D_u$ and $D_v$, respectively. }
\label{NSC-2}
\end{center}
\end{figure}
In addition, since the necessary condition for Hopf bifurcation of 0-mode is provided by $\alpha<\frac{5}{6}$, 
we set $\alpha=0.75$, namely the constant stationary solution $(u,v)=(5,10)$ is oscillatorily unstable. 

In this subsection, we show four cases of bifurcation diagrams: 
\begin{description}
\item Fig.~\ref{MM-BD}(a) : $D_u=0.08$ and $D_v$ is a bifurcation parameter.
\item Fig.~\ref{MM-BD}(b) : $D_u=0.055$ and $D_v$ is a bifurcation parameter.
\item Fig.~\ref{MM-BD}(c) : $D_v=1.75$ and $D_u$ is a bifurcation parameter. 
\item Fig.~\ref{MM-BD}(d) : $D_v=1.375$ and $D_u$ is a bifurcation parameter. 
\end{description}
\begin{figure}[htbp]
\begin{center}
\begin{tabular}{cc}
\includegraphics[width=56mm]{./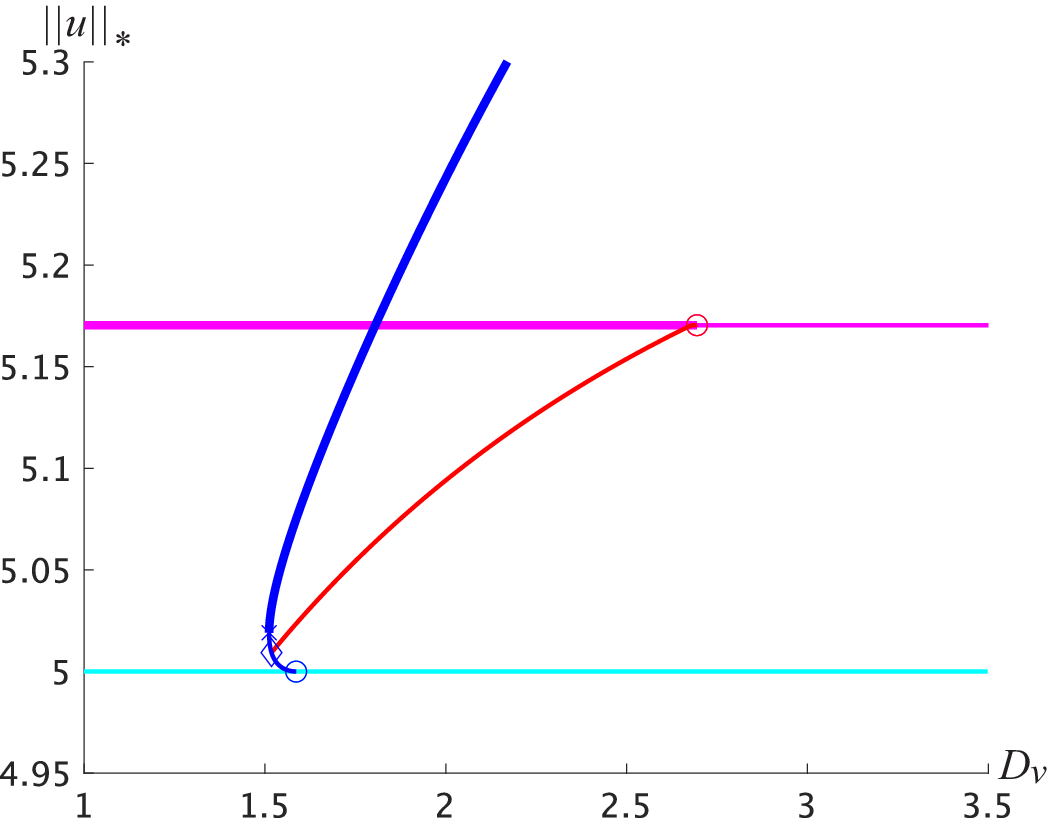}&
\includegraphics[width=56mm]{./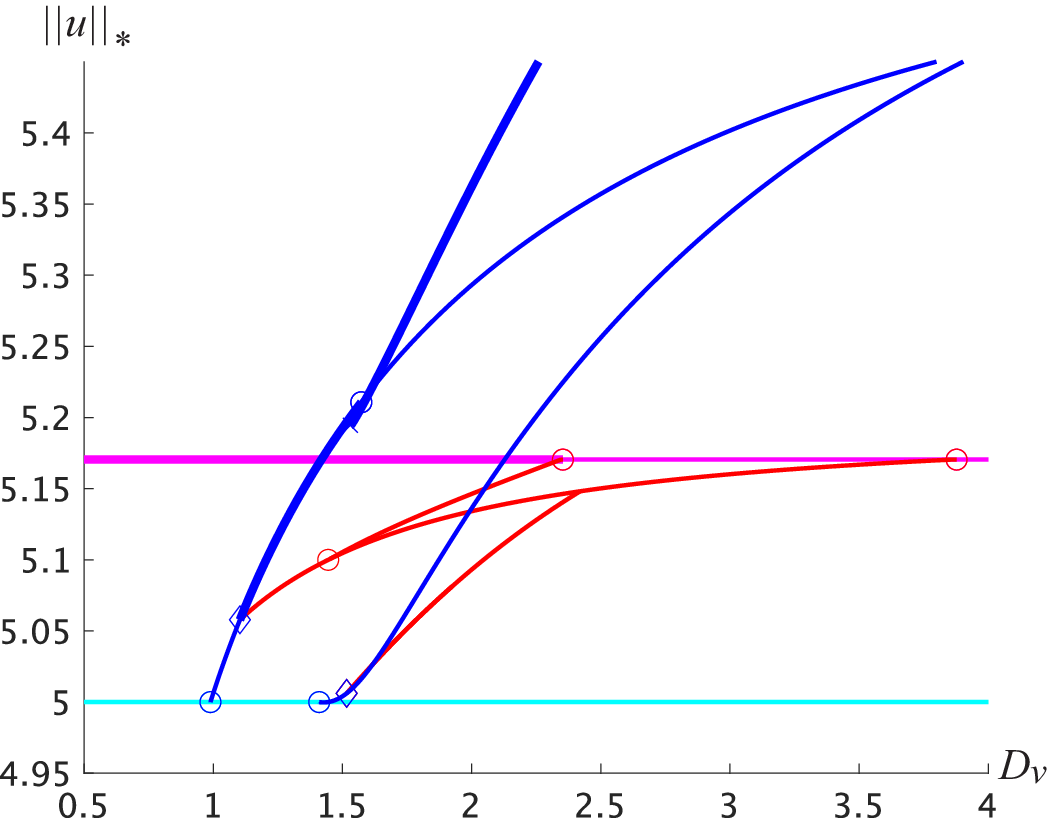}\\
(a) & (b)\\
\includegraphics[width=56mm]{./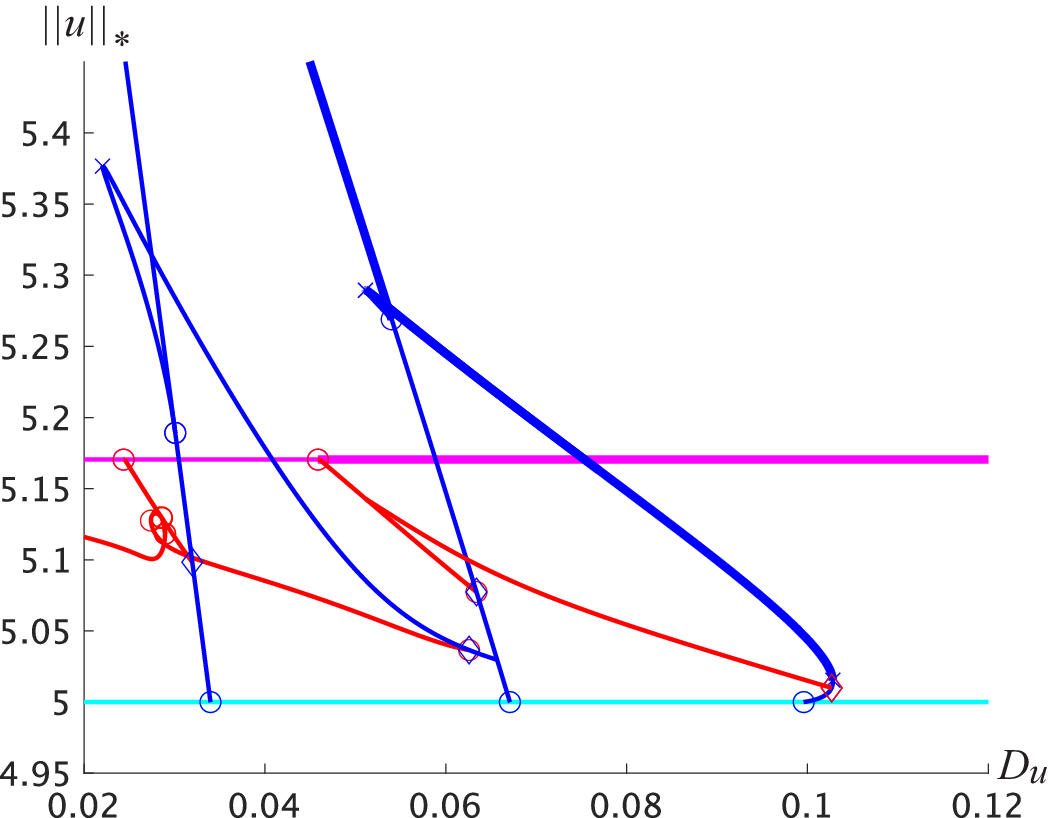}&
\includegraphics[width=56mm]{./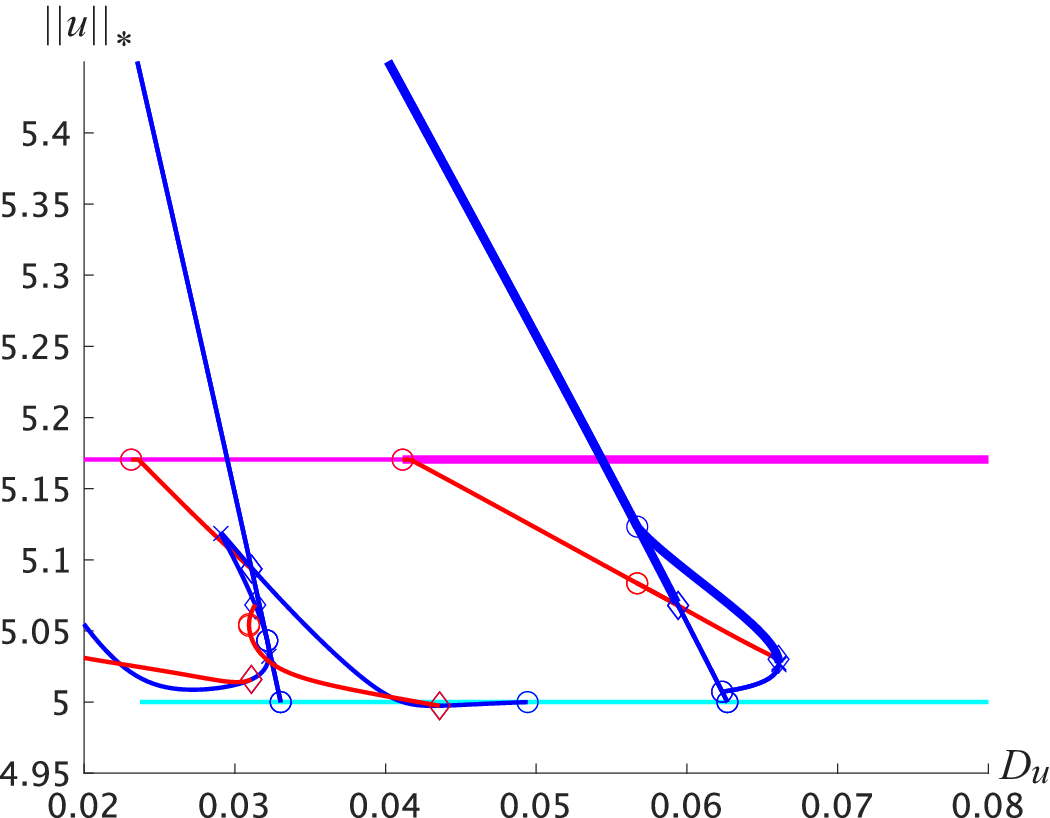}\\
(c) & (d)
\end{tabular}
\caption{Bifurcation diagrams for \eqref{Mimura-Murray} when $\alpha=0.75$. 
The horizontal axis and the vertical axis are respectively the bifurcation parameter indicated in each figure and the norm $\|u\|_{*}$. 
(a) $D_u=0.08$.
(b) $D_u=0.055$.
(c) $D_v=1.75$.
(d) $D_v=1.375$.
}
\label{MM-BD}
\end{center}
\end{figure}

\begin{figure}[htbp]
\begin{center}
\includegraphics[width=110mm]{./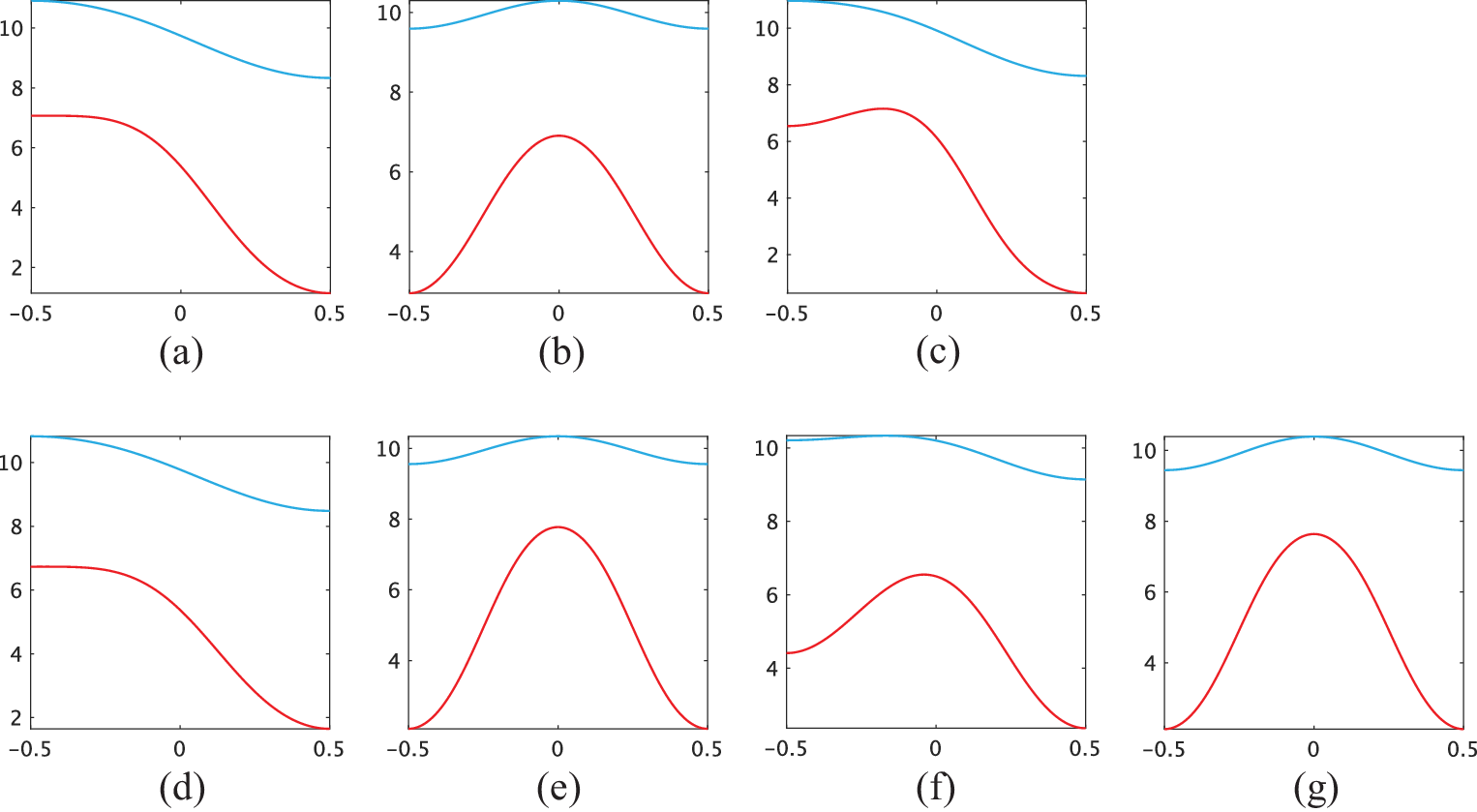}
\caption{Stable stationary solutions.  
(a) $D_u=0.08$ and $D_v=1.89834$.
(b) $D_u=0.055$ and $D_v=1.32413$.
(c) $D_u=0.055$ and $D_v=1.88668$.
(d) $D_u=0.0859729$ and $D_v=1.75$.
(e) $D_u=0.0507831$ and $D_v=1.75$.
(f) $D_u=0.058582$ and $D_v=1.375$. 
(g) $D_u=0.0484719$ and $D_v=1.375$. 
}
\label{stationarysol}
\end{center}
\end{figure}
Fig.~\ref{stationarysol} displays examples of non-constant stationary solutions on the stable branches in Fig.~\ref{MM-BD}. 
It is observed from Fig.~\ref{MM-BD} that there is no stable time periodic solution except for a spatially homogeneous time periodic solution (thick magenta colored line) in these parameter regimes. 
However, several unstable time periodic solution branches with spatial heterogeneity are obtained. 
It appears that many of these periodic solution branches do not exist for the original Mimura--Murray model with $\alpha=1$. 
The appearance of these branches may attribute to the Hopf instability of 0-mode.




\subsection{Artificial system}

So far, we considered the well known reaction-diffusion systems, Schnakenberg model and Mimura--Murray model.  
Our framework discussed in Section~\ref{HTT} can apply to any 2-component reaction-diffusion systems which cause the Turing instability. 
Therefore, we treat an artificial system which does not have any meanings from mathematical model point of view as a final example. 
The system considered is 
\begin{equation}\label{Ar}
\begin{aligned}
\partial_t u&=D_u \partial_{xx} u+u-3v-u^2,\\
\partial_t v&=\alpha \left( D_v \partial_{xx} v+2u-4v-3u^2 \right),
\end{aligned}
\end{equation}
and we impose the zero flux boundary conditions. 
This system is also included into the activator-inhibitor systems. 
Obviously, $(u,v)=(0,0)$ is a constant stationary solution, and the necessary condition for the Hopf instability of $(u,v)=(0,0)$ is given by $\alpha<1/4$. 
A bifurcation diagram when $\alpha=0.24$ and $D_v=0.24$ is shown in Fig.~\ref{BD-5}. 
\begin{figure}[htbp]
\begin{center}
\includegraphics[width=80mm]{./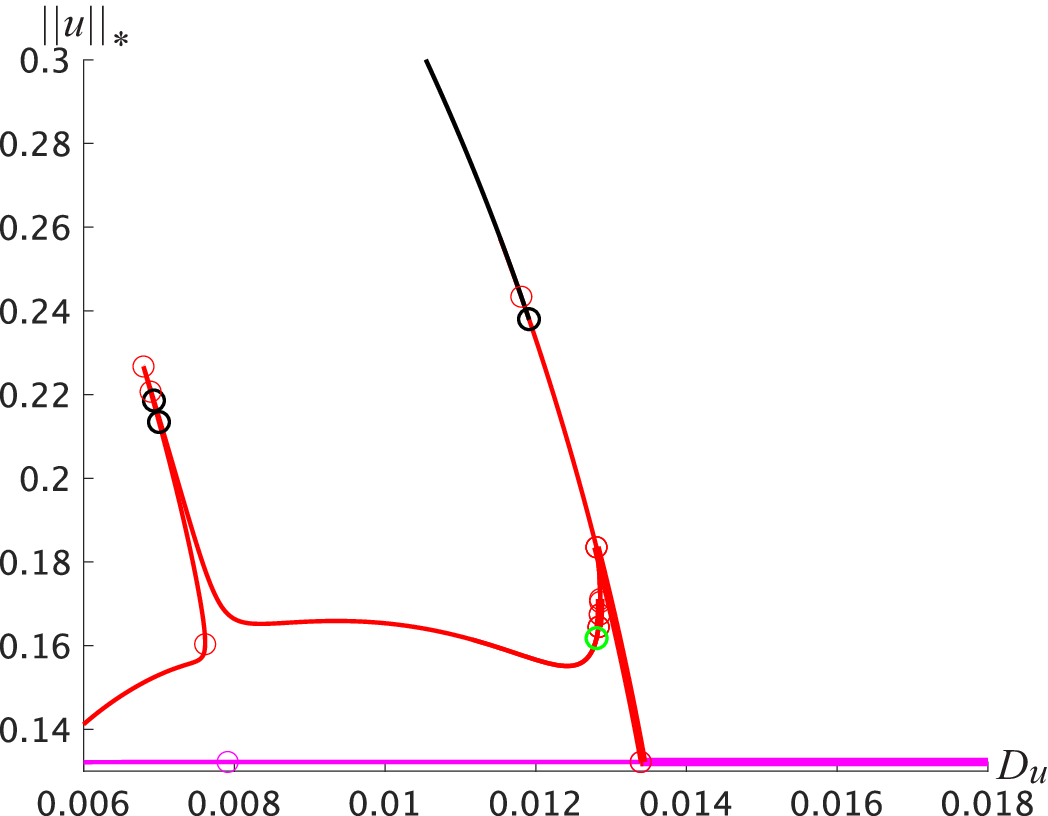}
\caption{Bifurcation diagram for \eqref{Ar} when $\alpha=0.24$ and $D_v=0.24$. }
\label{BD-5}
\end{center}
\end{figure}
\begin{figure}[htbp]
\begin{center}
\includegraphics[width=110mm]{./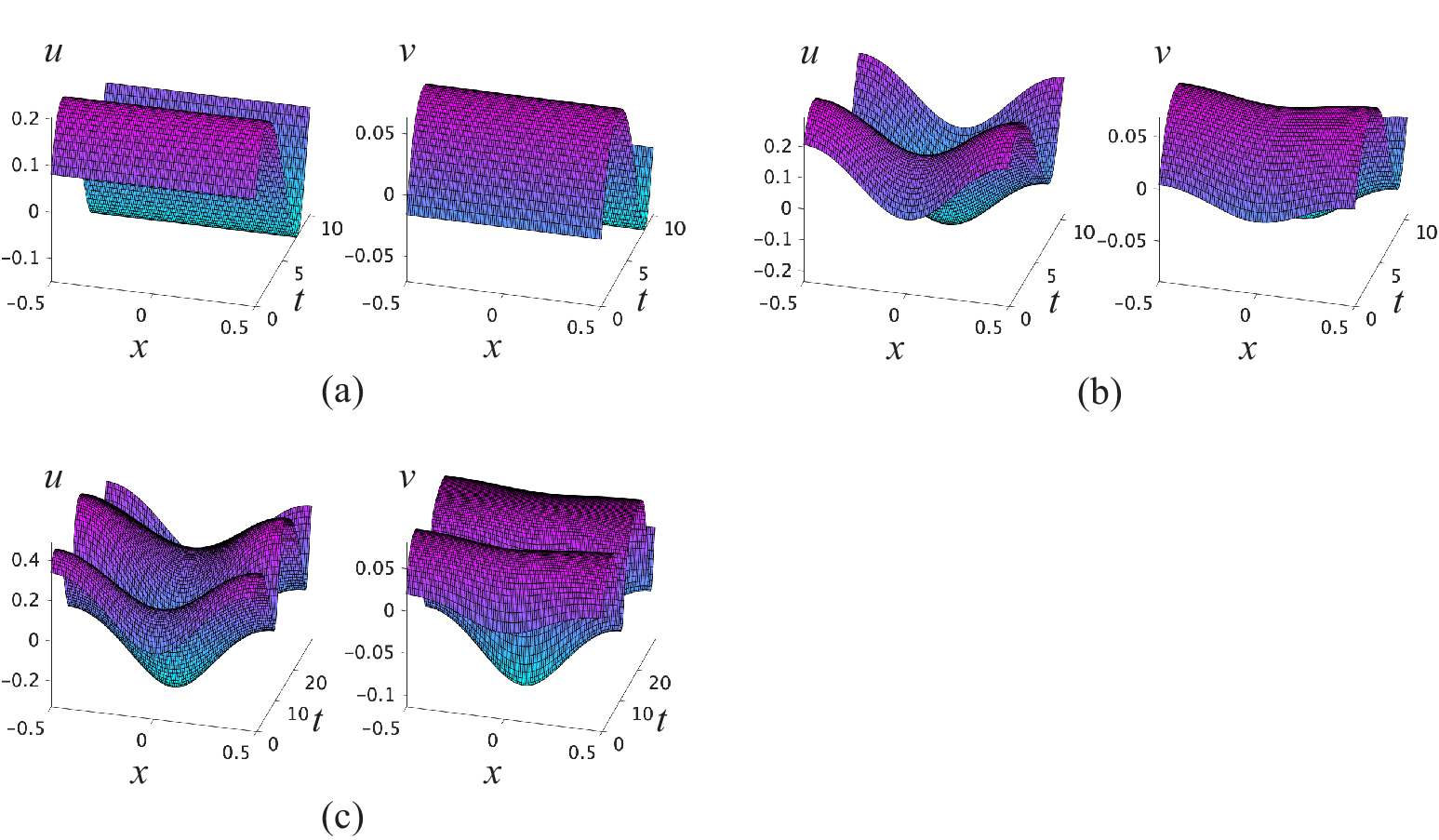}
\caption{Solution profiles. 
(a) Spatially homogeneous time periodic solution. The period is $T=10.1409$.
(b) A stable time periodic solution with spatial heterogeneity at $D_u=0.0132228$. The period is $T=10.5842$. 
(c) An unstable time periodic solution at $D_u=0.0118248$ which is located on the black branch in Fig.~\ref{BD-5} bifurcating from the period-doubling bifurcation point. The period is $T=29.7548$. 
}
\label{hopfsol-5}
\end{center}
\end{figure}
When $D_u$ value is relatively large, a spatially homogeneous time periodic solution exists stably(see Fig.~\ref{hopfsol-5}(a)), but it is destabilized at $D_u=0.0133924$ and a stable time periodic solution branch with spatial heterogeneity appears due to a bifurcation (Fig.~\ref{hopfsol-5}(b) shows a typical stable periodic solution). 
This stable time periodic solution branch loses its stability at $D_u=0.0128053$. 
However, as the value of $D_u$ decreases further, this branch undergoes a period-doubling bifurcation at $D_u=0.0119096$ which is marked with a black circle. 
The black curve in Fig.~\ref{BD-5} indicates a time periodic solution branch bifurcating from the period-doubling bifurcation point. A solution profile on this branch is exhibited in Fig.~\ref{hopfsol-5}(c) which shows that the period is much longer. 
In Fig.~\ref{BD-5}, there are two more period-doubling bifurcation points marked with black circles though we do not compute period-doubling branches. 
Moreover, interestingly, a torus bifurcation point marked with a green circle is also detected. 
Since period-doubling bifurcation points, a torus bifurcation point and other bifurcation points can be seen, it is expected that \eqref{Ar} possesses much more complicated bifurcation structure.  

\bigskip

In this section, we numerically investigated bifurcation diagrams for three reaction-diffusion systems with a aid of a Matlab package pde2path. 
Depending on systems and parameter values, bifurcation structures drastically change. 
Moreover, it is suggested that the Hopf instability of 0-mode makes bifurcation structure richer. 
For these complicated bifurcations, it seems that 
the organizing center is a triply degenerate point;  Hopf instability of 0-mode and diffusion-induced instabilities of $m$ and $m+1$-modes. 
To our best knowledge, this triply degenerate point is probably a point where the most complicated bifurcation structures can be generated in 2-component reaction-diffusion systems. 
In the next section, we focus on this point and derive a finite dimensional dynamical system via the center manifold reduction from 2-component reaction-diffusion systems to unveil a variety of pattern dynamics.



\section{Center manifold reduction}\label{CMR}

In this section, we derive the normal form for the Hopf--Turing--Turing bifurcation from \eqref{2rd} with \eqref{BC}.
In particular, we focus on the triply degenerate point with $0$-mode Hopf instability and $1$ and $2$-modes Turing instabilities. For the normal form for the Hopf--Turing--Turing bifurcation with $0:m:m+1$, see Remark~\ref{rem:normal}.

\subsection{Dynamical system on Fourier space}
In order to derive the normal form for the Hopf--Turing--Turing bifurcation, 
we impose the following assumption on \eqref{2rd}: 
\begin{assumption}\label{ass:rd}
  \begin{enumerate}
  \item[(1)]
  The functions $f(u, v)$ and $g(u, v)$ are sufficiently smooth;
  \item[(2)]
    The system \eqref{2rd} has the trivial solution $(u,v) = (\tilde u, \tilde v)$;
  \item[(3)]
    The linearized matrix $M$ satisfies $\tr{M} < 0$ and $\det{M} > 0$.
  \end{enumerate}
\end{assumption}
We define the phase space for the dynamical system \eqref{2rd} as
\begin{align*}
\mathscr{X}_\mathrm{N} := \{&(u,v) \in H^{2} (0,L) \times H^{2} (0,L);\, \partial_x u = \partial_x v = 0 \ \text{at} \ x = 0, L\},
\end{align*}
where
\begin{equation*}
\|(u,v)\|_{\mathscr{X_\mathrm{N}}} := \sqrt{\|u\|^{2}_{H^{2}(0,L)} + \|v\|^{2}_{H^{2}(0,L)}}.
\end{equation*}
Note that solutions of \eqref{2rd} with \eqref{BC} can be considered as those of periodic boundary problem with period $2L$.
If $(u(x, t), v(x, t)) \in \mathscr{X}_\mathrm{N}$ is a solution of \eqref{2rd}, then the extended solution $(u^* (x, t), v^* (x, t))$ for $x \in [0, 2L]$ such as
\[
u^* (x, t) = 
\begin{cases}
    u(x, t) & x \in [0, L],\\
    u(2L - x, t) & x \in [L, 2L],
\end{cases} \
v^* (x, t) = 
\begin{cases}
    v(x, t) & x \in [0, L],\\
    v(2L - x, t) & x \in [L, 2L]
\end{cases} 
\]
is a solution to the following system:
\begin{equation}\label{rde2}
  \begin{cases}
    \partial_t u = D_u \partial_{xx} u + f(u,v), \quad (x, t) \in (0, 2 L) \times (0, \infty)\\
    \partial_t v = \alpha \left( D_v \partial_{xx} v + g(u,v) \right)  \quad (x, t) \in (0, 2 L) \times (0, \infty),\\
    u(x, t) = u(x + 2L, t), \ \partial_x u(x, t) = \partial_x u(x + 2 L, t), \ t \in (0, \infty), \\
    v(x, t) = v(x + 2L, t), \ \partial_x v(x, t) = \partial_x v(x + 2 L, t), \ t \in (0, \infty).
  \end{cases}
\end{equation}
Hence, we consider the dynamical system \eqref{rde2} in a phase space
\begin{equation}\label{eq:pbc}
\begin{aligned}
\mathscr{X}_{\mathrm{per}} = \{(u,v) &\in H^{2}_{\mathrm{per}} (0, 2L) \times H^{2}_{\mathrm{per}} (0, 2L);\\
&(u(x), v(x)) = (u(2L - x), v(2L - x))\}.
\end{aligned}
\end{equation}
From the above formulation, we can represent a solution to \eqref{rde2} as the complex Fourier series:
\begin{equation*}
  u(x, t) - \tilde u = \sum_{m\in\Z}u_m(t) \e^{\I m k x}, \quad  v(x, t) - \tilde v = \sum_{m\in\Z}v_m(t) \e^{\I m k x}, \quad k = \dfrac{\pi}{L}.
\end{equation*}
Then, by the projection
\begin{equation}
\mathcal{P}(u,v) = \left\{\dfrac{1}{2L}\int^{2L}_{0} ( u(t,x), v(t,x) )\mathrm{e}^{-\mathrm{i}mkx}dx\right\}_{m\in\mathbb{Z}}
\end{equation}
the system \eqref{rde2} is equivalent to
\begin{equation}\label{eq:infinite}
  \begin{pmatrix}
    \dot u_m\\
    \dot v_m
  \end{pmatrix}
  =
  M_{m, \alpha}
  \begin{pmatrix}
    u_m\\
    v_m
  \end{pmatrix}
  +
  \begin{pmatrix}
    F_m\\
    G_m
  \end{pmatrix}, \quad m\in\Z,
\end{equation}
where $(u_m, v_m) \in \R \times \R$ $(m \in \mathbb{Z})$ and $(F_m, G_m)$ is written by
\begin{multline*}
  F_m = \sum_{m_{1}, m_{2} \in \mathbb{Z} \atop m_1 + m_2 = m}\bigg(\dfrac{f_{uu}}{2} u_{m_1} u_{m_2} + f_{uv} u_{m_1} v_{m_2} + \dfrac{f_{vv}}{2} v_{m_1} v_{m_2}\bigg)\\
  + \sum_{m_{1}, m_{2}, m_{3}\in \mathbb{Z} \atop m_1 + m_2 + m_3= m}\bigg(\dfrac{f_{uuu}}{6} u_{m_1} u_{m_2} u_{m_3} + \dfrac{f_{uuv}}{2} u_{m_1} u_{m_2} v_{m_3} \\
  + \dfrac{f_{uvv}}{2} u_{m_1} v_{m_2} v_{m_3} + \dfrac{f_{vvv}}{6} v_{m_1} v_{m_2} v_{m_3}\bigg),
\end{multline*}
\begin{multline*}
  G_m = \alpha \Bigg\{ \sum_{m_{1}, m_{2} \in \mathbb{Z} \atop m_1 + m_2 = m}\bigg(\dfrac{g_{uu}}{2} u_{m_1} u_{m_2} + g_{uv} u_{m_1} v_{m_2} + \dfrac{g_{vv}}{2} v_{m_1} v_{m_2}\bigg)\\
  + \sum_{m_{1}, m_{2}, m_{3} \in \mathbb{Z} \atop m_1 + m_2 + m_3= m}\bigg(\dfrac{g_{uuu}}{6} u_{m_1} u_{m_2} u_{m_3} + \dfrac{g_{uuv}}{2} u_{m_1} u_{m_2} v_{m_3}\\
  + \dfrac{g_{uvv}}{2} u_{m_1} v_{m_2} v_{m_3} + \dfrac{g_{vvv}}{6} v_{m_1} v_{m_2} v_{m_3}\bigg) \Bigg\}.
\end{multline*}
The phase space for the dynamical system \eqref{eq:infinite} is defined by
\begin{equation}
\begin{aligned}\label{eq:phase_space}
\mathscr{X}_{F} := \Big\{ \{(u_{m},v_{m})\}_{m\in\mathbb{Z}} &;\, \ (u_{m},v_{m}) = (u_{-m}, v_{-m}), \\
&\qquad \qquad \| \{(u_{m},v_{m})\}_{m\in\mathbb{Z}} \|^{2}_{\mathscr{X}_F} < \infty \Big\}
\end{aligned}
\end{equation}
with the norm $\| \{(u_{m},v_{m})\}_{m\in\mathbb{Z}} \|^2_{\mathscr{X}_F} := \sum_{m\in\mathbb{Z}}(1+m^{2})^{2}|(u_{m},v_{m})|^{2}$, 
which is equivalent to $\mathscr{X}_{\mathrm{per}}$ by the projection $\mathcal{P}:\mathscr{X}_{\mathrm{per}} \to \mathscr{X}_{F}$.

Note that $(u_{m}, v_{m}) = (u_{-m}, v_{-m})$ holds from the symmetry \eqref{eq:pbc}.
Therefore, it is sufficient to consider the system \eqref{eq:infinite} for $m \ge 0$.
In this section, we study bifurcation structures of the system \eqref{eq:infinite} on \eqref{eq:phase_space} instead of the system \eqref{rde2} on \eqref{eq:pbc}.
We also remark that the systems \eqref{2rd} and \eqref{rde2} are invariant under the spatial translation $x \mapsto x + \eta$ $(\eta \in \mathbb{R})$ and the reflection $x \mapsto -x$.
This implies that the systems are invariant with respect to the actions defined by
\[
\mathcal{T}_\eta \begin{pmatrix} u(x) \\ v(x) \end{pmatrix} =  \begin{pmatrix} u(x + \eta) \\ v(x + \eta) \end{pmatrix}, \quad \mathcal{S}  \begin{pmatrix} u(x) \\ v(x) \end{pmatrix} =  \begin{pmatrix} u(-x) \\ v(-x) \end{pmatrix},
\]
and hence, normal forms on center manifolds are equivalent with respect to the symmetry operations:
\begin{equation}\label{o2}
(u_m, v_m) \mapsto (\e^{\mathrm{i}mk\eta}u_m,\e^{\mathrm{i}mk\eta}v_m), \quad (u_m, v_m) \mapsto (\bar u_m, \bar v_m).
\end{equation}

We now consider the linearized eigenvalue problem for each Fourier mode.
In the following, we regard a triplet $(\alpha, D_u, D_v)$ as the bifurcation parameters.
We call a pair of parameters $(D_u,D_v)$ satisfying $\det M_{\pm1, \alpha}= \det M_{\pm2, \alpha} = 0$ as 1:2 degenerate point.
The 1:2 degenerate point is given by
\begin{equation}\label{degenerate-point}
(D_u, D_v) = (D_u^*, D_v^*) := \left(\dfrac{5 \delta - \sqrt{25 \delta^2 - 16 f_u g_v \delta}}{8 g_v k^2} , \dfrac{g_v k^2 D_u^* - \delta}{k^2 (k^2 D_u^* - f_u)} \right),
\end{equation}
where we put $\delta := f_u g_v - f_v g_u$.
As we have already seen in Section~\ref{HTT}, the neutral stability curves are independent of $\alpha$.
This implies that for fixed $\alpha = \alpha^*$, the linearized operator could have 1:2 degenerate point and Hopf instability point simultaneously.
That is, the following holds:
\begin{proposition}
Assume that the system \eqref{2rd} satisfies Assumption~\ref{ass:rd}.
Then, for given $L > 0$, there exists $(\alpha, D_{u}, D_v) = (\alpha^{*}, D_{u}^{*}, D_v^{*})$ such that the linearized operator of \eqref{2rd} at the constant stationary solution $(u,v) = (\tilde u, \tilde v)$ has a pair of purely imaginary eigenvalues and zero eigenvalues whose multiplicity is two.
More precisely, the eigenvalues of $M_{0, \alpha}$, $M_{1, \alpha}$ and $M_{2, \alpha}$ at $(\alpha, D_{u}, D_v) = (\alpha^{*}, D_{u}^{*}, D_v^{*})$ are $\pm \mathrm{i}\omega$, $0$ and $0$, respectively.
The others are strictly negative.
\end{proposition}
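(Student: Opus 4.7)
The plan is to construct the triplet $(\alpha^{*}, D_{u}^{*}, D_{v}^{*})$ explicitly. Since the key identity $\det M_{n,\alpha} = \alpha \det \tilde M_{n}$ makes the Turing neutral curves $\alpha$-independent, I would first locate the $1{:}2$ degenerate point in the $(D_{u}, D_{v})$-plane (where the $n=1$ and $n=2$ determinants vanish), and then tune $\alpha$ so that the $0$-mode trace vanishes while its determinant remains positive, producing purely imaginary eigenvalues.

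For the first step, writing $q_{n} = (n\pi/L)^{2} = (nk)^{2}$, the condition $\det \tilde M_{n} = 0$ becomes the quadratic equation $p(q_{n}) = 0$ with $p(q) := D_{u} D_{v} q^{2} - (D_{u} g_{v} + D_{v} f_{u}) q + \delta$ in $q$. Requiring the two prescribed roots $q = k^{2}$ and $q = 4k^{2}$, Vieta's formulas give the pair
\begin{equation*}
D_{u} D_{v} = \frac{\delta}{4 k^{4}}, \qquad D_{u} g_{v} + D_{v} f_{u} = \frac{5 \delta}{4 k^{2}},
\end{equation*}
which eliminates $D_v$ and yields a quadratic in $D_{u}$ whose solutions are exactly formula~\eqref{degenerate-point}. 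Positivity of $(D_{u}^{*}, D_{v}^{*})$ follows from $\delta = \det M > 0$ together with the standard Turing sign condition $f_{u} g_{v} < 0$ (which is implicit, since otherwise no Turing instability occurs); under this condition the discriminant $25\delta^{2} - 16 f_{u} g_{v} \delta$ is positive and the appropriate branch of~\eqref{degenerate-point} gives a positive $D_{u}^{*}$, whence $D_{v}^{*}$ is positive as well.

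Next, I would set $\alpha^{*} := -f_{u}/g_{v}$, which is positive by the same Turing sign condition and which forces $\tr M_{0,\alpha^{*}} = f_{u} + \alpha^{*} g_{v} = 0$. Since $\det M_{0,\alpha^{*}} = \alpha^{*} \delta > 0$, the eigenvalues of $M_{0,\alpha^{*}}$ are exactly $\pm \mathrm{i}\omega$ with $\omega = \sqrt{\alpha^{*} \delta}$. At this same $\alpha^{*}$, the identity $f_{u} + \alpha^{*} g_{v} = 0$ simplifies the trace to $\tr M_{n,\alpha^{*}} = -(D_{u}^{*} + \alpha^{*} D_{v}^{*})(nk)^{2}$, which is strictly negative for every $n \ge 1$. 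Combined with $\det M_{1,\alpha^{*}} = \det M_{2,\alpha^{*}} = 0$, this shows that $M_{1,\alpha^{*}}$ and $M_{2,\alpha^{*}}$ each possess a simple zero eigenvalue accompanied by a strictly negative one.

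To finish, I would verify that no further instability occurs at higher modes. Because $p(q) = D_{u}^{*} D_{v}^{*} (q - k^{2})(q - 4 k^{2})$ is positive for $q > 4 k^{2}$, we obtain $\det M_{n,\alpha^{*}} = \alpha^{*} p(q_{n}) > 0$ for all $n \ge 3$; together with the already established $\tr M_{n,\alpha^{*}} < 0$, both eigenvalues of $M_{n,\alpha^{*}}$ then have strictly negative real part. The computations are all elementary linear algebra on $2 \times 2$ matrices, so I do not expect a serious analytical obstacle; the only delicate point is the bookkeeping of signs needed to ensure simultaneously $D_{u}^{*}, D_{v}^{*} > 0$, $\alpha^{*} > 0$, and $\omega \neq 0$, which all hinge on the implicit Turing sign requirement $f_{u} g_{v} < 0$ that must accompany Assumption~\ref{ass:rd}.
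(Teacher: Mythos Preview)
Your proposal is correct and follows exactly the construction the paper sketches in Section~\ref{H-insta} and just before the proposition: the paper states the $1{:}2$ degenerate point~\eqref{degenerate-point} and observes that the Hopf criticality $\alpha^{*}$ can be chosen independently, but gives no formal proof. Your Vieta argument for $(D_u^{*},D_v^{*})$, the choice $\alpha^{*}=-f_u/g_v$, and the check that $p(q_n)>0$ for $n\ge 3$ supply precisely the missing details, and you correctly flag that the sign bookkeeping relies on the implicit Turing condition $f_u g_v<0$ (equivalently $f_u>0>g_v$, cf.\ Section~\ref{T-insta}) rather than on Assumption~\ref{ass:rd} alone.
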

We call the parameter set $(\alpha, D_{u}, D_v) = (\alpha^{*}, D_{u}^{*}, D_v^{*})$ as Hopf--Turing--Turing instability point.


\subsection{Normal form for the Hopf--Turing--Turing bifurcation}\label{sec:NF}

Put
\begin{equation}\label{eigen}
\begin{aligned}
\mu_0 &= \mu_0(\alpha) = \dfrac{\tr M_{0, \alpha}}{2},\\
\omega_0 &= \omega_0(\alpha) = \dfrac{1}{2}\sqrt{4\det M_{0, \alpha} - (\tr M_{0, \alpha})^2},\\
\mu_j^{\pm} &= \mu_j^{\pm}(\alpha, D_u, D_v) = \dfrac{\tr M_{j, \alpha} \pm \sqrt{(\tr M_{j, \alpha})^2 - \det M_{j, \alpha}}}{2} \quad (j = 1, 2).
\end{aligned}
\end{equation}
Then, the eigenvalues of $M_{0, \alpha}$ can be written by $\lambda_0^\pm = \mu_0 \pm \I \omega_0$ for $|\alpha - \alpha^*| \ll 1$.
Note that $\mu_0 = 0$, $\omega_0 > 0$, $\mu_j^+ = 0$ and $\mu_j^- = \tr M_{j, \alpha} < 0$ $(j = 1, 2)$ hold on the Hopf--Hopf--Turing instability point.

To derive the normal form on the center manifolds, we introduce the change of variables $(\alpha_j, \beta_j) = T_j^{-1}(u_j, v_j)$ for $j = 0, 1,2$, that transforms \eqref{eq:infinite} into
\begin{equation}\label{eq:tth-diagonal}
  \begin{cases}
    \begin{pmatrix}
      \dot\alpha_0\\
      \dot\beta_0
    \end{pmatrix}
    =
    \begin{pmatrix}
      \mu_0 & -\omega_0\\
      \omega_0 & \mu_0
    \end{pmatrix}
    \begin{pmatrix}
      \alpha_0\\
      \beta_0
    \end{pmatrix}
    + T_0^{-1}
    \begin{pmatrix}
      F_0\\
      G_0
    \end{pmatrix}, \bigskip \\
    \begin{pmatrix}
      \dot\alpha_j\\
      \dot\beta_j
    \end{pmatrix}
    =
    \begin{pmatrix}
      \mu_j^+ & 0\\
      0 & \mu_j^-
    \end{pmatrix}
    \begin{pmatrix}
      \alpha_j\\
      \beta_j
    \end{pmatrix}
    + T_j^{-1}
    \begin{pmatrix}
      F_j\\
      G_j
    \end{pmatrix}, \quad j = 1,2,\bigskip\\
    \begin{pmatrix}
      \dot u_m\\
      \dot v_m
    \end{pmatrix}
    = M_{m, \alpha}
    \begin{pmatrix}
      u_m\\
      v_m
    \end{pmatrix} + 
    \begin{pmatrix}
      F_m\\
      G_m
    \end{pmatrix}, \quad m \in\mathbb{N}\setminus\{1,2\}
  \end{cases}
\end{equation}
where
\begin{equation*}
T_0 = 
\begin{pmatrix} - f_v & 0 \\
f_u - \mu_0 & \omega_0
\end{pmatrix}, \quad
  T_j =
  \begin{pmatrix}
    -f_v & -f_v\\
    f_u - \mu_j^+ & f_u - \mu_j^-
  \end{pmatrix}.
\end{equation*}
By applying the center manifold reduction (\cite{HI}), we obtain the following.
\begin{proposition}\label{prop:normalform-tth}
  Put $\boldsymbol{\alpha} := (\alpha_{0}, \beta_{0}, \alpha_{1}, \alpha_{-1}, \alpha_{2}, \alpha_{-2})$ and $\bs\mu := (\alpha, D_u, D_v) - (\alpha^*, D_u^*, D_v^*)$.
For a sufficiently small $\varepsilon > 0$, let $\mathcal{N}_{\varepsilon}$ be a neighborhood of $\mathscr{X}_F \times \mathbb{R}^{3}$:
  \begin{equation}
  \begin{aligned}
  \mathcal{N}_{\varepsilon} &:= \big\{ (\{(u_m, v_m)\}_{m\in\mathbb{Z}}, \bs{\mu}) \in \mathscr{X}_F \times \mathbb{R}^{3};\, \|\{(u_m, v_m)\}_{m \in \mathbb{Z}}\|_{\mathscr{X}_F} + |\bs{\mu}| < \varepsilon \big\}.
\end{aligned}
  \end{equation}
  Then, for given functions $f$ and $g$ satisfying Assumption~\ref{ass:rd} and a positive constant $L$,  there exists a positive constant $\varepsilon$ such that the local center manifold $\mathcal{M}^{c}_{loc}$ of \eqref{eq:infinite} is contained in $\mathcal{N}_{\varepsilon}$.
  Furthermore, the dynamics of the dynamical system \eqref{eq:infinite} on $\mathcal{M}^{c}_{loc}$ is locally topologically equivalent to the dynamics given by the following dynamical system:
  \begin{equation}\label{eq:reduced}
    \begin{cases}
      \dot \alpha_0 = \mu_0 \alpha_0 - \omega_0 \beta_0 + A_1\alpha_0^2 + A_2 \alpha_0 \beta_0 + A_3 \beta_0^2 + A_4 \alpha_1^2 + A_5 \alpha_2^2\\
      \quad+ (a_1 \alpha_0^2 + a_2 \alpha_1^2 + a_3 \alpha_2^2) \alpha_0 + (a_4 \alpha_0 + a_5 \beta_0) \alpha_0 \beta_0\\
      \quad+ (a_6 \beta_0^2 + a_7 \alpha_1^2 + a_8 \alpha_2^2) \beta_0 + a_9 \alpha_1^2 \alpha_2
      + \mathcal{O}_4,\\
      \dot \beta_0 = \omega_0 \alpha_0 + \mu_0 \beta_0 + B_1\alpha_0^2 + B_2 \alpha_0 \beta_0 + B_3 \beta_0^2 + B_4 \alpha_1^2 + B_5 \alpha_2^2\\
      \quad + (b_1 \alpha_0^2 + b_2 \alpha_1^2 + b_3 \alpha_2^2) \alpha_0 + (b_4 \alpha_0 + b_5 \beta_0) \alpha_0\beta_0\\
      \quad + (b_6 \beta_0^2 + b_7 \alpha_1^2 + b_8 \alpha_2^2)\beta_0 + b_9 \alpha_1^2 \alpha_2
      + \mathcal{O}_4,\\
      \dot \alpha_1 = \mu_1^+ \alpha_1 + (C_1\alpha_0 + C_2\beta_0)\alpha_1 + C_3 \alpha_1 \alpha_2\\
      \quad + (c_1\alpha_0^2 + c_2 \alpha_0 \beta_0 + c_3 \beta_0^2 + c_4 \alpha_1^2 + c_5 \alpha_2^2) \alpha_1\\
      \quad + c_6 \alpha_0 \alpha_1 \alpha_2 + c_7 \beta_0 \alpha_1\alpha_2 + \mathcal{O}_4,\\
      \dot \alpha_2 = \mu_2^+ \alpha_2 + D_1 \alpha_1^2 + (D_2 \alpha_0 + D_3 \beta_0) \alpha_2\\
      \quad + (d_1 \alpha_0^2 + d_2 \alpha_0 \beta_0 + d_3 \beta_0^2 + d_4 \alpha_1^2 + d_5 \alpha_2^2) \alpha_2\\
      \quad + (d_6 \alpha_0 + d_7 \beta_0) \alpha_1^2 + \mathcal{O}_4,
    \end{cases}
  \end{equation}
where the coefficients $A_{j}$, $a_{j}$, $B_{j}$, $b_{j}$, $D_{j}$, $d_{j}$, $E_{j}$ and $e_{j}\in\mathbb{R}$ depend on the coefficients and parameters appearing in \eqref{eq:infinite}, and $\mathcal{O}_4 = \mathcal{O}(|(\bs{\alpha}, \bs{\mu})|^4)$.
\end{proposition}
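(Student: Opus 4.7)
The plan is to verify the hypotheses of the center manifold theorem of Haragus--Iooss \cite{HI} for the diagonalized system \eqref{eq:tth-diagonal} on $\mathscr{X}_F$ at the Hopf--Turing--Turing point. The preceding proposition shows that the spectrum of the linear part decomposes as $\sigma_c \cup \sigma_s$, where $\sigma_c = \{\pm\I\omega_0, 0, 0, 0, 0\}$ consists of the critical eigenvalues from $M_{0,\alpha^*}$ together with the simple zero eigenvalues of $M_{\pm 1, \alpha^*}$ and $M_{\pm 2, \alpha^*}$, and $\sigma_s$ is strictly bounded away from the imaginary axis. The spectral gap is uniform in $m$ since $\det M_{m,\alpha^*}\sim \alpha^* D_u^* D_v^* k^4 m^4$ as $|m|\to\infty$, while $\tr M_{m,\alpha^*}\sim -(D_u^*+\alpha^* D_v^*)k^2 m^2$. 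Combined with Assumption~\ref{ass:rd} on the smoothness of $f,g$ and standard estimates on the Nemytskii operators defining $F_m,G_m$ in the weighted sequence space $\mathscr{X}_F$, the theorem produces a locally invariant $C^k$ center manifold $\mathcal{M}^c_{loc}\subset\mathcal{N}_\varepsilon$ tangent to the central subspace at the origin, parametrized by $\bs\alpha=(\alpha_0,\beta_0,\alpha_1,\alpha_{-1},\alpha_2,\alpha_{-2})$.

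The symmetry $u_m=u_{-m}$, $v_m=v_{-m}$ inherited from \eqref{eq:pbc} forces $\alpha_{-1}=\alpha_1$ and $\alpha_{-2}=\alpha_2$ on $\mathcal{M}^c_{loc}$, so the reduction is effectively four-dimensional in $(\alpha_0,\beta_0,\alpha_1,\alpha_2)$. The structure of the reduced vector field is then strongly constrained by equivariance under the $O(2)$ action \eqref{o2}: the translation subgroup assigns weight $m$ to every coordinate at Fourier mode $m$, so that $(\alpha_0,\beta_0)$ rotate as a pair in the Hopf plane while $\alpha_{\pm 1}$ and $\alpha_{\pm 2}$ pick up factors $\e^{\pm\I k\eta}$ and $\e^{\pm 2\I k\eta}$ respectively, and only weight-matching monomials are admissible in each component. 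This selection rule produces exactly the quadratic terms displayed in \eqref{eq:reduced}: $\alpha_1^2$ and $\alpha_2^2$ enter the Hopf block, the quadratic coupling $\alpha_1^2$ in the $\alpha_2$-equation and $\alpha_1\alpha_2$ in the $\alpha_1$-equation realise the $1{:}2$ resonance, and at cubic order the only resonant extras beyond the generic pure cubic self-interactions are $\alpha_1^2\alpha_2$ in the Hopf block and $\alpha_0\alpha_1^2,\beta_0\alpha_1^2$ in the $\alpha_2$-equation. The reflection $\mathcal{S}$ forces the coefficients to be real. All other cubic monomials are absent by weight counting, matching the list in \eqref{eq:reduced} exactly.

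To compute the coefficients I would write the off-centre coordinates of $\mathcal{M}^c_{loc}$, namely the hyperbolic components $(\beta_1,\beta_2)$ and all $(u_m,v_m)$ with $|m|\notin\{0,1,2\}$, as a formal power series $\Psi(\bs\alpha,\bs\mu)$, substitute into \eqref{eq:tth-diagonal}, and match coefficients order by order. Quadratic products of central variables excite precisely the modes $m\in\{0,1,2,3,4\}$; at $m=3,4$ the operators $M_{m,\alpha^*}$ are invertible and $\Psi$ is determined uniquely, whereas at $m=0,1,2$ one projects onto the stable eigendirection (using $T_0,T_1,T_2$) to determine the $\beta$-components of $\Psi$ while the projection onto the central eigendirection furnishes the quadratic coefficients $A_i,B_i,C_i,D_i$. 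Re-inserting $\Psi$ into the central equations generates the cubic coefficients $a_i,b_i,c_i,d_i$ as sums of direct cubic Nemytskii contributions and cascade terms through the quadratic part of $\Psi$. The main obstacle, and the place where the argument is not a purely formal application of the theorem, is the bookkeeping induced by the $1{:}2$ resonance: the quadratic terms $\alpha_1\alpha_2$ and $\alpha_1^2$ are resonant and cannot be removed by a near-identity change of variables, so every higher-order calculation must carry them as genuine central couplings, and their cross-interaction with the excited modes $3,4$ produces numerous cubic contributions that have to be tracked individually. Once this computation is done in the $1{:}2$ case, repeating it with $(1,2)$ replaced by $(m,m+1)$ for arbitrary $m\ge 1$ yields the generalisation noted in Remark~\ref{rem:normal}.
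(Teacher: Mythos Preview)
Your proposal is correct and follows essentially the same route as the paper: apply the center manifold theorem of \cite{HI} with the suspension trick, use the $O(2)$ equivariance \eqref{o2} to single out the admissible monomials, compute the quadratic parts of the graph functions $h_m^u, h_m^v$ for $m=3,4$ (where $M_{m,\alpha^*}$ is invertible) and $h_j^\beta$ for $j=1,2$ (by projecting onto the stable eigendirection), and then substitute back into \eqref{eq:tth-diagonal} to read off the coefficients. The paper carries out this computation explicitly, giving closed-form expressions for all the $A_j, a_j, B_j, b_j, C_j, c_j, D_j, d_j$ in terms of the $T_j$, the partial derivatives of $f,g$, and the entries of $M_{m,\alpha}$; your sketch correctly anticipates every structural ingredient of that calculation.
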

\begin{proof}
  Let the index set be $\mathscr{I} := \{0, \pm 1, \pm 2\}$.
  Center manifold theorem with suspension trick states that there exist functions $\beta_j = h_j^{\beta}(\boldsymbol{\alpha}, \boldsymbol{\mu})$, $u_m = h_m^{u}(\boldsymbol{\alpha}, \boldsymbol{\mu})$ and $v_m = h_m^{v}(\boldsymbol{\alpha}, \boldsymbol{\mu})$ such that the graphs have a quadratic tangency with the center eigenspace at the origin, that is, $h_{j}^{\beta}$, $h_m^{u}$ and $h_m^{v}$ is $\mathcal{O}(|(\boldsymbol{\alpha}, \boldsymbol{\mu})|^2)$ as $(\boldsymbol{\alpha}, \boldsymbol{\mu}) \to \boldsymbol{0}$. 
Then the local invariant manifold $\mathcal{M}^c_{loc}$ is given by
\begin{align*}
\mathcal{M}^c_{loc} &= \big\{ \{(\alpha_i, \beta_i), \ (u_m, v_m) \}_{i \in \mathscr{I}, \ |m| \not\in \mathscr{I}} \in \mathscr{X}_F ; \, \beta_i = h_i^{\beta}(\bs{\alpha}, \bs{\mu}), \\
& (u_m, v_m) = (h_m^u(\bs{\alpha}, \bs{\mu}), h_m^v(\bs{\alpha}, \bs{\mu})), \ i \in \mathscr{I}\setminus\{0\}, \ m \not\in \mathscr{I} \big\}.
\end{align*} 
Therefore, we have
  \begin{align}
    \dot u_m &= \dot h_m^u = \sum_{j \in \mathscr{I}}\dfrac{\pa h_m^u}{\pa \alpha_j}\dot\alpha_j + \sum_{j\in\mathscr{I}\setminus\{0\}}\dfrac{\pa h_m^u}{\pa \mu_j^{+}} \dot \mu^{+}_j + \dfrac{\pa h_m^u}{\pa \beta_0}\dot\beta_0 + \dfrac{\pa h_m^u}{\pa \mu_0}\dot\mu_0\\
    &= \dfrac{\pa h_m^u}{\pa \alpha_0}(\mu_0 \alpha_0 - \omega_0 \beta_0)
     + \sum_{j \in \mathscr{I}\setminus\{0\}}\dfrac{\pa h_m^u}{\pa \alpha_j}\mu_j^+\alpha_j + \dfrac{\pa h_m^u}{\pa \beta_0}(\omega_0 \alpha_0 + \mu_0 \beta_0) + \mathcal{O}_3,\\
    \dot v_m &= \dot h_m^u = \sum_{j \in \mathscr{I}}\dfrac{\pa h_m^v}{\pa \alpha_j}\dot\alpha_j + \sum_{j\in\mathscr{I}\setminus\{0\}}\dfrac{\pa h_m^v}{\pa \mu_j^{+}} \dot \mu^{+}_j + \dfrac{\pa h_m^v}{\pa \beta_0}\dot\beta_0 + \dfrac{\pa h_m^u}{\pa \mu_0}\dot\mu_0\\
    &= \dfrac{\pa h_m^v}{\pa \alpha_0}(\mu_0 \alpha_0 - \omega_0 \beta_0)
     + \sum_{j \in \mathscr{I}\setminus\{0\}}\dfrac{\pa h_m^v}{\pa \alpha_j}\mu_j^+\alpha_j + \dfrac{\pa h_m^v}{\pa \beta_0}(\omega_0 \alpha_0 + \mu_0 \beta_0) + \mathcal{O}_3,
  \end{align}
and then, that yields 
  \begin{equation}\label{h}
    \begin{pmatrix}
      \dot h_m^u\\
      \dot h_m^v
    \end{pmatrix}
    = \omega_0(\alpha)
    \begin{pmatrix}
      \dfrac{\pa h_m^u}{\pa \beta_0} & -\dfrac{\pa h_m^u}{\pa \alpha_0}\\
      \dfrac{\pa h_m^v}{\pa \beta_0} & -\dfrac{\pa h_m^v}{\pa \alpha_0}
    \end{pmatrix}
    \begin{pmatrix}
      \alpha_0\\
      \beta_0
    \end{pmatrix}
    = M_{m, \alpha}
    \begin{pmatrix}
      h_m^u\\
      h_m^v
    \end{pmatrix}
    +
    \begin{pmatrix}
      F_m\\
      G_m
    \end{pmatrix}
    + \mathcal{O}_3.
  \end{equation}

  In the following, we obtain the polynomial approximations for $h_3^u$, $h_3^v$, $h_4^u$ and $h_4^v$ up to the quadratic terms.
Put
  \begin{equation*}
    M_{m, \alpha} =
    \begin{pmatrix}
      M_m^{11} & M_m^{12}\\
      M_m^{21} & M_m^{22}
    \end{pmatrix} \ (m \in \mathbb{Z}), \quad 
    T_{i} = 
    \begin{pmatrix}
      T_{i}^{11} & T_{i}^{12}\\
      T_{i}^{21} & T_{i}^{22}
    \end{pmatrix} \ (i = \pm1, \pm2).
  \end{equation*}
  Then, we have
  \begin{align*}
  h_m^u &= \dfrac{1}{\det M_m} \bigg\{ \omega_0 \left[ \left(M_m^{22}\dfrac{\pa h_m^u}{\pa \beta_0} - M_m^{12}\dfrac{\pa h_m^v}{\pa \beta_0}\right)\alpha_0 +\left(-M_m^{22}\dfrac{\pa h_m^u}{\pa \alpha_0} + M_m^{12}\dfrac{\pa h_m^v}{\pa \alpha_0}\right)\beta_0 \right] \\
  & - M_m^{22}F_m + M_m^{12}G_m \bigg\} + \mathcal{O}_3,\\
  h_m^v &= \dfrac{1}{\det M_m} \bigg\{ \omega_0 \left[ \left(-M_m^{21}\dfrac{\pa h_m^u}{\pa \beta_0} + M_m^{11}\dfrac{\pa h_m^v}{\pa \beta_0}\right)\alpha_0 + \left(M_m^{21}\dfrac{\pa h_m^u}{\pa \alpha_0} - M_m^{11}\dfrac{\pa h_m^v}{\pa \alpha_0}\right)\beta_0 \right] \\
  & + M_m^{21}F_m - M_m^{11}G_m \bigg\} + \mathcal{O}_3
  \end{align*}
  from \eqref{h}.
 Here, the invariance of the equations under the mapping \eqref{o2} implies that the expansions for $m = 3, 4$ are of the form
 \begin{equation}
     \begin{pmatrix}
         h_3^u \\ h_3^v
     \end{pmatrix}
     =
     \begin{pmatrix}
         U_{0011}^3 \\ V_{0011}^3
     \end{pmatrix}
      \alpha_1 \alpha_2 + \mathcal{O}_3, \quad 
      \begin{pmatrix}
         h_4^u \\ h_4^v
     \end{pmatrix}
     =
     \begin{pmatrix}
         U_{0002}^4 \\ V_{0002}^4
     \end{pmatrix}
      \alpha_2^2 + \mathcal{O}_3,
 \end{equation}
 and the coefficients are given by
\begin{align*}
  U_{0011}^3 &= -\dfrac{M_3^{22} f_{1,2}^{1,1} - M_3^{12} g_{1,2}^{1,1}}{\det M_{3, \alpha}}, 
  & V_{0011}^3 &= \dfrac{M_3^{21} f_{1,2}^{1,1} - M_3^{11} g_{1,2}^{1,1}}{\det M_{3, \alpha}},\\
  U_{0002}^4 &= -\dfrac{M_4^{22} f_{2,2}^{1,1} - M_4^{12} g_{2,2}^{1,1}}{2 \det M_{4, \alpha}}, 
  & V_{0002}^4 &= \dfrac{M_4^{21} f_{2,2}^{1,1} - M_4^{11} g_{2,2}^{1,1}}{2 \det M_{4, \alpha}},
\end{align*}
where
 \begin{align*}
  f_{m,n}^{i,j} &:= f_{uu} T_m^{1i} T_n^{1j} + f_{uv} (T_m^{1i} T_n^{2j} + T_m^{2i} T_n^{1j}) + f_{vv} T_m^{2i} T_n^{2j},\\
  g_{m,n}^{i,j} &:= g_{uu} T_m^{1i} T_n^{1j} + g_{uv} (T_m^{1i} T_n^{2j} + T_m^{2i} T_n^{1j}) + g_{vv} T_m^{2i} T_n^{2j}.
\end{align*}
The quadratic approximations of $h_i^\beta$ $(i = \pm1, \pm2)$ can be obtained similarly:
  \begin{align*}
    h_1^\beta &= (B_{1010}^1 \alpha_0 + B_{0110}^1 \beta_0) \alpha_1 + B_{0011}^1 \alpha_1 \alpha_2 + \mathcal{O}_3,\\
    h_2^\beta &= B_{0020}^2 \alpha_1^2 + (B_{1001}^2 \alpha_0 + B_{0101}^2 \beta_0) \alpha_2 + \mathcal{O}_3,
  \end{align*}
  where
  \begin{align*}
   B_{1010}^1 &= \dfrac{\mu_1^{-} (T_1^{21} f_{0,1}^{1,1} - T_1^{11} g_{0,1}^{1,1}) + \omega_0 (T_1^{21} f_{0,1}^{1,2} - T_1^{11} g_{0,1}^{1,2})}{\det T_1\{ (\mu_1^{-})^2 + \omega_0^2 \}},\\
   B_{0110}^1 &= \dfrac{\mu_1^{-} (T_1^{21} f_{0,1}^{1,2} - T_1^{11} g_{0,1}^{1,2}) - \omega_0 (T_1^{21} f_{0,1}^{1,1} - T_1^{11} g_{0,1}^{1,1})}{\det T_1\{ (\mu_1^{-})^2 + \omega_0^2 \}},\\
   B_{0011}^1 &= \dfrac{T_1^{21} f_{1,2}^{1,1} - T_1^{11} g_{1,2}^{1,1}}{\mu_1^{-} \det T_1},\\
   B_{0020}^2 &= \dfrac{T_2^{21} f_{1,1}^{1,1} - T_2^{11} g_{1,1}^{1,1}}{2 \mu_2^{-} \det T_2},\\  
   B_{1001}^2 &= \dfrac{\mu_2^{-} (T_2^{21} f_{0,2}^{1,1} - T_2^{11} g_{0,2}^{1,1}) + \omega_0 (T_2^{21} f_{0,2}^{2,1} - T_2^{11} g_{0,2}^{2,1})}{\det T_2\{ (\mu_2^{-})^2 + \omega_0^2 \}},\\
  B_{0101}^2 &= \dfrac{\mu_2^{-} (T_2^{21} f_{0,2}^{2,1} - T_2^{11} g_{0,2}^{2,1}) - \omega_0 (T_2^{21} f_{0,2}^{1,1} - T_2^{11} g_{0,2}^{1,1})}{\det T_2\{ (\mu_1^{-})^2 + \omega_0^2 \}}.
  \end{align*}
  Finally, substituting $h_{3}^{u}$, $h_{3}^{v}$, $h_{4}^{u}$, $h_{4}^{v}$, $h_{\pm 1}^{\beta}$ and $h_{\pm 2}^{\beta}$ into \eqref{eq:tth-diagonal}, we obtain \eqref{eq:reduced}.
  The explicit form of coefficients of \eqref{eq:reduced} are shown in Appendix~\ref{sec:apA}.
\end{proof}


Next, we rewrite the reduced system \eqref{eq:reduced} using standard method (see the section~3.5 of \cite{Kuz}) and some near-identity transformations.
Set
\begin{equation}
  A_{0, \alpha} =
  \begin{pmatrix}
    \mu_0 & -\omega_0\\
    \omega_0 & \mu_0
  \end{pmatrix}.
\end{equation}
Let $\bs{q}(\alpha) \in \mathbb{C}^2$ be an eigenvector of $A_{0, \alpha}$ corresponding to the eigenvalue $\lambda_0$ such as $A_{0, \alpha}\bs{q}(\alpha) = \lambda_0 \bs{q}(\alpha)$.
Also let $\bs{p}(\alpha) \in \mathbb{C}^2$ be an eigenvector of the transposed matrix $A_0^t(\alpha)$ corresponding to its eigenvalue $\bar\lambda_0$ such as $A_{0, \alpha}^t \bs{p}(\alpha) = \bar\lambda_0 \bs{p}(\alpha)$.
By setting
\begin{equation}
  \bs{p} (\alpha) = \begin{pmatrix} p_1 \\ p_2 \end{pmatrix} = \dfrac{1}{2\omega_0}
  \begin{pmatrix}
    1 \\ - \mathrm{i}
  \end{pmatrix},
  \quad \bs{q} (\alpha) = \begin{pmatrix} q_1 \\ q_2 \end{pmatrix} = \omega_0
  \begin{pmatrix}
    1 \\ - \mathrm{i}
  \end{pmatrix},
\end{equation}
we can normalize $\bs{p}(\alpha)$ with respect to $\bs{q}(\alpha)$, namely $\langle \bs{p}(\alpha), \bs{q}(\alpha)\rangle = 1$, where $\langle\cdot,\cdot\rangle$ means the standard scalar product in $\mathbb{C}^2$: $\langle \bs{p}(\alpha), \bs{q}(\alpha) \rangle = \bar p_1 q_1 + \bar p_2 q_2$.
We introduce a new variable $z(t) \in \mathbb{C}$ by $z = \langle \bs{p}(\alpha), (\alpha_0, \beta_0)^t\rangle$.
By the above representation, any vector $(\alpha_0, \beta_0)^t$ can be uniquely represented as $(\alpha_0, \beta_0)^t = z \bs{q}(\alpha) + \bar z \bar{\bs{q}}(\alpha)$ (\cite{Kuz}).
The direct computation yields
  \begin{equation}\label{eq:ourcase}
    \begin{cases}
      \dot z = \lambda_0 z + E_1 z^2 + E_2 |z|^2 + E_3 \bar z^2 + E_4 \alpha_1^2 + E_5 \alpha_2^2 + e_1 z^3\\
      \quad  + (e_2 z + e_3 \bar z)|z|^2 + e_4 \bar z^3 + (e_5 z + e_6 \bar z) \alpha_1^2 + (e_7 z + e_8 \bar z) \alpha_2^2\\
      \quad + e_9 \alpha_1^2 \alpha_2 
      + \mathcal{O}_{4},\\
      \dot \alpha_1 = \mu_1^+ \alpha_1 + (H_1 z + \bar H_1 \bar z) \alpha_1 + C_3 \alpha_1 \alpha_2 + (H_2 z^2 + \bar H_2 \bar z^2) \alpha_1 \\
      \quad + H_3 |z|^2\alpha_1 + (H_4 z + \bar H_4 \bar z) \alpha_1 \alpha_2 + (c_4 \alpha_1^2 + c_5 \alpha_2^2) \alpha_1 + \mathcal{O}_{4},\\
      \dot \alpha_2 = \mu_2^+ \alpha_2 + D_1 \alpha_1^2 + (I_1 z + \bar I_1 \bar z) \alpha_2 + (I_2 z^2  + \bar I_2 \bar z^2) \alpha_2 + I_3 |z|^2\alpha_2 \\
      \quad + (I_4 z + \bar I_4 \bar z) \alpha_1^2 + (d_4 \alpha_1^2 + d_5 \alpha_2^2) \alpha_2 + \mathcal{O}_{4}.
    \end{cases}
  \end{equation}
where $E_j, e_j, H_j, I_j \in \mathbb{C}$.
The explicit form of the coefficients are listed in Appendix~\ref{sec:apB}.

We now eliminate the quadratic and cubic terms as possible.
By applying some near-identity transformations given in Appendix~\ref{sec:apC}, we end up with the normal form for the Hopf--Turing--Turing bifurcation:
\begin{theorem}\label{thm:1}
For given functions $f$ and $g$ satisfying Assumption~\ref{ass:rd} and a positive constant $L$, there exists a positive constant $\varepsilon$ such that the local center manifold $\mathcal{M}^{c}_{loc}$ of the dynamical system \eqref{eq:infinite} is contained in $\mathcal{N}_{\varepsilon}$.
Furthermore, the dynamics of \eqref{eq:infinite} on $\mathcal{M}^{c}_{loc}$ is locally topologically equivalent to the dynamics given by the normal form for the Hopf--Turing--Turing bifurcation:
\begin{equation}\label{eq:normalform}
 \begin{cases}
   \dot z_0 = (\lambda_0 + \tilde a_0 |z_0|^2 + \tilde a_1 z_1^2 + \tilde a_2 z_2^2)z_0 + \mathcal{O}_4,\\
   \dot z_1 = (\mu_1 + \tilde b_0 |z_0|^2 + \tilde b_1 z_1^2 + \tilde b_2 z_2^2) z_1 + \tilde B z_1 z_2 + \mathcal{O}_4,\\
   \dot z_2 = (\mu_2 + \tilde c_0 |z_0|^2 + \tilde c_1 z_1^2 +  \tilde c_2 z_2^2)z_2 + \tilde C z_1^2 + \mathcal{O}_4,
 \end{cases}
\end{equation}
where $\mu_1 = \mu_1^+$, $\mu_2 = \mu_2^+$, $(z_0, z_1, z_2) \in \mathbb{C} \times \mathbb{R}^2$, $\tilde a_j \in \mathbb{C}$, $\tilde b_j \in \mathbb{R}$, $\tilde c_j \in \mathbb{R}$ $(j = 0,1,2)$, $\tilde B \in \mathbb{R}$, $\tilde C \in \mathbb{R}$ and $\mathcal{O}_4 = \mathcal{O}(|(z_{0}, \bar z_{0}, z_1, z_2)|^{4})$.
\end{theorem}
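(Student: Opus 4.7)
The plan is to take the reduced system \eqref{eq:ourcase} on the local center manifold (already established in Proposition~\ref{prop:normalform-tth} and the complex rescaling $z = \langle \bs p(\alpha), (\alpha_0, \beta_0)^t \rangle$) and to eliminate all non-resonant polynomial terms up to cubic order by a finite sequence of polynomial near-identity transformations. The surviving monomials should match \eqref{eq:normalform} exactly.

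First, I would classify resonant monomials. The Hopf block has spectrum $\pm\I\omega_0$ at the bifurcation point, so the Poincar\'e--Birkhoff condition retains only $|z_0|^{2k} z_0$ in $\dot z_0$. Combined with the equivariance \eqref{o2}, whose action on the center directions is essentially $(z_0, z_1, z_2) \mapsto (z_0, \e^{\I k\eta} z_1, \e^{2\I k\eta} z_2)$ together with the reflection $(z_0, z_1, z_2) \mapsto (\bar z_0, z_1, z_2)$, the surviving cubic terms in $\dot z_0$ are precisely $|z_0|^2 z_0$, $z_1^2 z_0$ and $z_2^2 z_0$. The $1{:}2$ spatial mode interaction ($k + k = 2k$ and $2k - k = k$, which in the cosine basis produced the coefficients $C_3$ and $D_1$ in \eqref{eq:ourcase}) singles out $z_1 z_2$ in $\dot z_1$ and $z_1^2$ in $\dot z_2$ as the only resonant quadratics, and the resonant cubics are $|z_0|^2 z_1, z_1^3, z_1 z_2^2$ in $\dot z_1$ and $|z_0|^2 z_2, z_1^2 z_2, z_2^3$ in $\dot z_2$. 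This is exactly the structure of \eqref{eq:normalform}.

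Second, I would construct the removing transformations. The Hopf self-quadratics $E_1 z_0^2, E_2|z_0|^2, E_3 \bar z_0^2$ and the cross-quadratics $E_4 \alpha_1^2, E_5 \alpha_2^2$ in $\dot z_0$ are killed by a shift of the form $z_0 \mapsto z_0 + \sum \phi_{ab} z_0^a \bar z_0^b + \psi_1 \alpha_1^2 + \psi_2 \alpha_2^2$, whose cohomological equations have denominators proportional to $\pm \I\omega_0$ or $\pm 2\I \omega_0$, bounded away from zero near the bifurcation point. Analogous shifts $\alpha_j \mapsto \alpha_j + (\eta_j z_0 + \bar\eta_j \bar z_0) \alpha_j$ eliminate the cross-quadratics $(H_1 z + \bar H_1 \bar z)\alpha_1$ in $\dot \alpha_1$ and $(I_1 z + \bar I_1 \bar z)\alpha_2$ in $\dot \alpha_2$ using the same non-vanishing denominators. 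A second round of cubic near-identity transformations removes the non-resonant cubics $e_1 z_0^3, e_3 \bar z_0 |z_0|^2, e_4 \bar z_0^3, e_6 \bar z_0 \alpha_j^2, e_9 \alpha_1^2 \alpha_2$ in $\dot z_0$ together with the Hopf--Turing cubic couplings $H_2 z_0^2 \alpha_1, H_4 z_0 \alpha_1\alpha_2$, their conjugates, and the analogous terms in $\dot \alpha_2$. Relabelling $(z_0, \alpha_1, \alpha_2) \mapsto (z_0, z_1, z_2)$ and collecting the surviving monomials gives \eqref{eq:normalform}, with the explicit formulas for $\tilde a_j, \tilde b_j, \tilde c_j, \tilde B, \tilde C$ in terms of the coefficients of \eqref{eq:ourcase} recorded in Appendix~\ref{sec:apC}.

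The main obstacle is the cubic-order bookkeeping: each quadratic near-identity transformation generates new cubic contributions that feed back into the resonant cubic coefficients, and tracking these mixings accurately is what determines the final $\tilde a_j, \tilde b_j, \tilde c_j$. A secondary subtlety is that the reflection part of \eqref{o2} forces $z_1, z_2$ to be real and $\tilde b_j, \tilde c_j, \tilde B, \tilde C$ to be real while $\tilde a_j \in \mathbb{C}$, so every transformation must be chosen equivariant under this conjugation in order to preserve the reality structure. Since only local topological equivalence is claimed, the $\mathcal{O}_4$ remainder need not be controlled beyond the truncation, and the standard framework of \cite{Kuz}, applied block-wise to the Hopf direction and combined with the $1{:}2$ Turing resonance, yields a homeomorphism between the flows of \eqref{eq:ourcase} and \eqref{eq:normalform} in a small neighborhood of the Hopf--Turing--Turing point.
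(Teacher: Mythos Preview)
Your proposal is correct and follows essentially the same route as the paper: Appendix~\ref{sec:apC} removes the non-resonant quadratic terms of \eqref{eq:ourcase} via an explicit near-identity transformation (Lemma~\ref{lem:nit} and Corollary~\ref{cor:transform-tth}, with coefficients $\Gamma_{pqrs}=E_j/(\text{nonzero combinations of }\lambda_0,\bar\lambda_0,\mu_j^+)$ exactly as you anticipate), then applies a second cubic transformation (Lemma~\ref{lem:nearidentity-third}) to reach \eqref{eq:tth}, and finally reads off $\tilde a_j,\tilde b_j,\tilde c_j$ by coefficient comparison, tracking precisely the feedback of the quadratic change into the cubic coefficients that you flag as the main bookkeeping issue. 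The only difference is presentational: the paper writes down the transformations and denominators explicitly rather than appealing abstractly to the cohomological equations, but the resonance classification and the two-step (quadratic, then cubic) elimination are identical.
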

This theorem is one of main results in the present paper.
The proof and the explicit form of the coefficients are given in Appendix~\ref{sec:apC}.

\begin{remark}\label{rem:normal}
We have discussed the normal form for the Hopf--Turing--Turing bifurcation with the $0:1:2$ mode interaction. 
The Hopf--Turing--Turing bifurcation with the $0:m:m+1$ mode interaction $(m\geq 2)$ can be also treated similarly. 
Then, the normal form for this type becomes the form of
\begin{equation*}
 \begin{cases}
   \dot z_0 = (\lambda_0 + \tilde a_0 |z_0|^2 + \tilde a_1 z_m^2 + \tilde a_2 z_{m+1}^2)z_0 + \mathcal{O}_4,\\
   \dot z_m = (\mu_m + \tilde b_0 |z_0|^2 + \tilde b_1 z_m^2 + \tilde b_2 z_{m+1}^2) z_m  + \mathcal{O}_4,\\
   \dot z_{m+1} = (\mu_{m+1} + \tilde c_0 |z_0|^2 + \tilde c_1 z_m^2 + \tilde c_2 z_{m+1}^2)z_{m+1} + \mathcal{O}_4,
 \end{cases}
\end{equation*}
that is, the resonance terms are only dropped from \eqref{eq:normalform}. 
\end{remark}


\subsection{Dynamics on the center manifold}\label{sec:NF_anal}

By truncating up to the cubic terms and using the polar coordinate $z_0 = r_0 \e^{\I \theta_0}$, we transform the normal form of the Hopf--Turing--Turing bifurcation \eqref{eq:normalform} into
\begin{equation}\label{eq:NF_neumann}
  \begin{cases}
    \dot r_0 = (\mu_0 + (\mathrm{Re}\,\tilde a_0) r_0^2 + (\mathrm{Re}\,\tilde a_1) z_1^2 + (\mathrm{Re}\,\tilde a_2) z_2^2) r_0,\\ 
    \dot z_1 = (\mu_1 + \tilde b_0 r_0^2 + \tilde b_1 z_1^2 + \tilde b_2 z_2^2) z_1 + \tilde B z_1 z_2,\\
    \dot z_2 = (\mu_2 + \tilde c_0 r_0^2 + \tilde c_1 z_1^2 + \tilde c_2 z_2^2) z_2 + \tilde C z_1^2,\\
    \dot \theta_0 = \omega_0 + \mathcal{O}_2.
  \end{cases}
  \end{equation}
The above system can be simplified by the rescaling
\begin{align*}
l &= \dfrac{|\tilde c_2|}{\tilde B^2}, \ t \mapsto l t, \ r_0 \mapsto \sqrt{\dfrac{1}{l |\mathrm{Re}\, \tilde a_0|}} r_0, \ z_1 \mapsto \sqrt{\dfrac{|\tilde B|}{l |\tilde c_2 \tilde C|}} z_1, \ z_2 \mapsto \dfrac{1}{l \tilde B} z_2, \\
\mu_j &\mapsto l \mu_j \ (j = 0, 1, 2),
\end{align*}
Because $\omega_0 \neq 0$ near the Hopf--Turing--Turing bifurcation point, the angular component can be decoupled to obtain the following reduced system 
\begin{equation}\label{eq:012NF}
\begin{cases}
    \dot r_0 = (\mu_0 + \sigma_1 r_0^2 + d_{01} z_1^2 + d_{02} z_2^2) r_0,\\
    \dot z_1 = (\mu_1 + d_{10} r_0^2 + d_{11} z_1^2 + d_{12} z_2^2) z_1 + z_1 z_2,\\
    \dot z_2 = (\mu_2 + d_{20} r_0^2 + d_{21} z_1^2 + \sigma_2 z_2^2) z_2 + \sigma_3 z_1^2,
\end{cases}
\end{equation}
where 
\begin{align*}
\sigma_1 &= \mathrm{sign}(\mathrm{Re}\, \tilde a_0), &
d_{01} &= \sqrt{\dfrac{|\tilde B|}{|\tilde c_2 \tilde C|}} \mathrm{Re}\, \tilde a_1, &
d_{02} &= \dfrac{\mathrm{Re}\, \tilde a_2 }{|\tilde c_2|}, \\ 
d_{10} &= \dfrac{\tilde b_0}{|\mathrm{Re}\, \tilde a_0|}, &
d_{11} &= \dfrac{\tilde b_1 |\tilde B|}{|\tilde c_2 \tilde C|}, &
d_{12} &= \dfrac{\tilde b_2}{|\tilde c_2|}, \\
d_{20} &= \dfrac{\tilde c_0}{|\mathrm{Re}\, \tilde a_0|}, &
d_{21} &= \dfrac{\tilde c_1 |\tilde B|}{|\tilde c_2 \tilde C|}, &
\sigma_2 &= \mathrm{sign}(\tilde c_2), \\
\sigma_3 &= \mathrm{sign}(\tilde B \tilde C).
\end{align*}

For the system \eqref{eq:012NF}, there are possibly four equilibria as follows:
\begin{align*}
\mathrm{O} &= (0, 0, 0), & \text{for all} \ \mu_0, \mu_1, \mu_2 \\
\mathrm{PM}_0 &= \left(\sqrt{- \dfrac{\mu_0}{\sigma_1}}, 0, 0 \right), & \text{for} \quad \mu_0 \sigma_1 < 0,\\
\mathrm{PM}_{2}^{\pm} &=  \left(0, 0, \pm\sqrt{-\dfrac{\mu_{2}}{\sigma_2}} \right), & \text{for} \quad \mu_{2} \sigma_2 < 0,\\
\mathrm{MM}_{0:2}^{\pm} &= \left( \sqrt{\dfrac{d_{02} \mu_2 - \sigma_2 \mu_0}{\Delta}}, 0, \pm\sqrt{\dfrac{d_{20} \mu_0 - \sigma_1 \mu_2}{\Delta}} \right), &\\
 &\text{for} \quad \dfrac{d_{02} \mu_2 - \sigma_2 \mu_0}{\Delta} > 0 \ \text{and} \ \dfrac{d_{20} \mu_0 - \sigma_1 \mu_2}{\Delta} > 0,
\end{align*}
where $\Delta := \sigma_1 \sigma_2 - d_{02} d_{20}$.
The symbols O, PM$_{n}$ and MM$_{m:n}$ mean the trivial solution, pure mode and $m:n$-mixed mode, respectively.

PM$_0$ corresponds to the spatially homogeneous time periodic solution in the original reaction-diffusion system \eqref{2rd}.
If $\sigma_1 < 0$, then the locally asymptotically stable periodic solution bifurcates from the trivial state O through the super-critical Hopf bifurcation at $\mu_0 = 0$.

PM$_{2}^{\pm}$ corresponds to the 2-mode stationary solution in \eqref{2rd}.
The 2-mode stationary solutions bifurcate from the trivial state O through the pitchfork bifurcation at $\mu_2 = 0$, and they exist when $\mu_2 \sigma_2 < 0$.
Pure modes of other kind, i.e., with $r_0 = z_2 = 0$ and $z_1 \neq 0$, are not possible.

The $0:2$ mixed mode equilibria denoted by MM$_{0:2}^{\pm}$ correspond to time periodic solutions with a constant 2-mode amplitude.
The leading term of the solution is formally written by
\begin{align*}
u = \tilde u + \varepsilon_1 \Theta(t) + \varepsilon_2 \cos \dfrac{2\pi x}{L}, \quad 
v = \tilde v + \varepsilon_3 \Theta(t) + \varepsilon_4 \cos \dfrac{2\pi x}{L},
\end{align*}
where $\Theta(t) = \mathcal{O}(1)$ is a periodic function with period approximately $2\pi/\omega_0$ and $\varepsilon_j = \mathcal{O}(\sqrt{|\mu_0|}, \sqrt{|\mu_2|})$.
The instability and bifurcation from MM$_{0:2}^{\pm}$ will be discussed in Section~\ref{sec:02}.

The systems \eqref{eq:NF_neumann} and \eqref{eq:012NF} are invariant under the mappings $(r_0, z_1, z_2) \mapsto (- r_0, z_1, z_2)$ and $(r_0, z_1, z_2) \mapsto ( r_0, - z_1, z_2)$, therefore, $\mathcal{S}_1 := \{(r_0, z_1, z_2) \in \mathbb{R}^3;\, r_0 = 0\}$ and $\mathcal{S}_2 := \{(r_0, z_1, z_2) \in \mathbb{R}^3;\, z_1 = 0\}$ are invariant subspaces.
In the following, we mention the instability of mixed mode equilibria in $\mathcal{S}_1$ and $\mathcal{S}_2$.
The study to full dynamics and bifurcation structures of \eqref{eq:NF_neumann} on $\mathbb{R}^4$ is our future work.


\subsubsection{Bifurcation from a 1:2 mixed mode equilibrium}\label{sec:12}

On the invariant subspace $\mathcal{S}_1$, the system \eqref{eq:012NF} is reduced to
\begin{equation}\label{eq:12resonance}
  \begin{cases}
    \dot z_1 = (\mu_1 + d_{11} z_1^2 + d_{12} z_2^2) z_1 + z_1 z_2,\\
    \dot z_2 = (\mu_2 + d_{21} z_1^2 + \sigma_2 z_2^2) z_2 + \sigma_3 z_1^2.
  \end{cases}
\end{equation}
The above system has the spatial 1:2 resonance with O(2) symmetry, and has already been studied extensively (\cite{D,AGH,PK,SMH,IK}).
These authors identified the presence of equilibria, periodic orbits and structurally stable heteroclinic cycles.
In this subsection, we mention the interaction between the Hopf bifurcation from a $1:2$ mixed mode equilibrium (MM$_{1:2}$) and the Hopf bifurcation of spatially uniform oscillation.
We will see the occurrence of the Hopf--Hopf bifurcation, i.e. 3-torus may be generated.

Following the reference \cite{IK}, we briefly describe the computation of MM$_{1:2}$ which has the Hopf instability.
Suppose that $z_1 z_2 \neq 0$ and MM$_{1:2}$ has a form $z_1 = \rho z_2$ $(\rho \in \mathbb{R}\setminus\{0\})$.
Then we have
\begin{align*}
0 &= \mu_1 + (d_{11} \rho^2 z_2 + d_{12} z_2 + \rho ) z_2,\\
0 &= \mu_2 + (d_{21} \rho^2 z_2 + \sigma_2 z_2 + \sigma_3 \rho^2) z_2.
\end{align*}
 The linearized matrix at MM$_{1:2}$ is given by
 \[
 M = 
 \begin{pmatrix}
     \mu_0 + (d_{01} \rho^2 + d_{02}) z_2^2 & 0 & 0 \\
    0 & 2 d_{11} \rho^2 z_2^2 & (2 d_{12} z_2 + 1) \rho z_2 \\
    0 & 2 \rho z_2 (d_{21} z_2 + \sigma_3) & (2 \sigma_2 z_2 - \sigma_3 \rho^2) z_2
 \end{pmatrix}.
 \]
 Then, the linearized matrix $M$ has a pair of purely imaginary eigenvalues and a zero eigenvalue if and only if $\mu_0 = -(\rho^2 d_{01} + d_{02}) z_2^2$, 
$2 (d_{11} \rho^2 + \sigma_2) z_2 - \sigma_3 \rho^2 = 0$ and 
$d_{11}  (2 \sigma_2 z_2 - \sigma_3 \rho^2)z_2 - (2 d_{12} z_2 + 1)  (d_{21} z_2 + \sigma_3) > 0$ hold, simultaneously.
We thus obtain the Hopf--Hopf bifurcation point for $\sigma_3 = -1$ as follows:
 \begin{align*}
   z_2 &= z_2^{*} = -\dfrac{\rho^2}{2(d_{11} \rho^2 + \sigma_2)},\\
   z_1 &= z_1^* = \rho z_2^*,\\
   \mu_0 &= \mu_0^* = - \dfrac{(\rho^2 d_{01} + d_{02}) \rho^4}{4(\sigma_2 + d_{11} \rho^2)^2 },\\
   \mu_1 &= \mu_1^* = \dfrac{\rho^2 \left\{ 2\rho (\sigma_2 + d_{11} \rho^2) - (d_{12} + d_{11}\rho^2) \rho^2 \right\}}{4(\sigma_2 + d_{11} \rho^2)^2}, \\
   \mu_2 &= \mu_2^* = - \dfrac{\rho^4 \left\{ \rho^2 (2d_{11} + d_{21}) + 3 \sigma_2 \right\}}{4(\sigma_2 + d_{11} \rho^2)^2}.
 \end{align*}

Hereafter, we put $\sigma_3 = -1$.
The translation to the origin by means of $\nu_0 = \mu_0 - \mu_0^*$, $Z_1 = z_1 - z_1^*$ and $Z_2 = z_2 - z_2^*$, transforms the system \eqref{eq:012NF} into
 \begin{equation}\label{eq:r0Z1Z2}
 \begin{cases}
 \dot r_0 = \nu_0 r_0 + \tilde F_0(r_0, Z_1, Z_2),\\
 \dot Z_1 = \nu_{11} Z_1 + \nu_{12} Z_2 + \tilde F_1(r_0, Z_1, Z_2),\\
 \dot Z_2 = \nu_{21} Z_1 + \nu_{22} Z_2 + \tilde F_2(r_0, Z_1, Z_2), 
 \end{cases}
 \end{equation}
 where $\nu_{11} = 2 d_{11} (z_1^*)^2$, $\nu_{12} = z_1^* (2 d_{12} z_2^* + 1)$, $\nu_{21} = 2 z_1^* (d_{21} z_2^* - 1)$, $\nu_{22} = z_2^* (2 \sigma_2 z_2^* + \rho^2)$ and
 \begin{align*}
\tilde F_0(r_0, Z_1, Z_2) &= 2 (z_1^* d_{01} Z_1 + z_2^* d_{02} Z_2) r_0 + (\sigma_1 r_0^2 + d_{01} Z_1^2 + d_{02} Z_2^2) r_0,\\
\tilde F_1(r_0, Z_1, Z_2) &= d_{10} z_1^* r_0^2 + 3 d_{11} z_1^* Z_1^2 + (2 d_{12} z_2^* + 1) Z_1 Z_2 + d_{12} z_1^* Z_2^2\\
  &+ (d_{10} r_0^2 + d_{11} Z_1^2 + d_{12} Z_2^2) Z_1,\\
\tilde F_2(r_0, Z_1, Z_2) &= d_{20} z_2^* r_0^2 + (d_{21} z_2^* - 1) Z_1^2 + 2 d_{21} z_1^* Z_1 Z_2 + 3 \sigma_2 z_2^* Z_2^2\\
   &+ (d_{20} r_0^2 + d_{21} Z_1^2 + \sigma_2 Z_2^2 ) Z_2.
 \end{align*}
By applying the results given in the section~4.3.2 of \cite{KS} to \eqref{eq:r0Z1Z2}, we get the following system:
 \begin{align*}
 \begin{cases}
 \dot \rho_0 = (\nu_0 + J_1 \rho_0^2 + J_2 |w_{1,2}|^2) \rho_0 + \mathcal{O}_4(\rho_0, w_{1,2}, \bar w_{1,2}),\\
 \dot w_{1,2} = (\lambda_{1,2} + K_1 \rho_0^2 + K_2 |w_{1,2}|^2) w_{1,2} + \mathcal{O}_4(\rho_0, w_{1,2}, \bar w_{1,2}),
 \end{cases}
\end{align*}
where $J_1, J_2 \in \mathbb{R}$, $K_1, K_2 \in \mathbb{C}$, $\rho_0 = \rho_0(t) \in \mathbb{R}$, $w_{1,2} = w_{1,2}(t) \in \mathbb{C}$, $\lambda_{1,2} = \nu_{1,2} + \I \omega_{1,2}$, $\nu_{1,2} = (\nu_{11} + \nu_{22})/2$ and $\omega_{1,2} = \sqrt{4(\nu_{11} \nu_{22} - \nu_{12} \nu_{21}) - (\nu_{11} + \nu_{22})^2}/2$.
Note that $\rho_0(t)$ and $w_{1,2}(t)$ correspond to $x(t)$ and $w(t)$ of the system (4.17) in \cite{KS}, respectively, and that as we seen in Section~\ref{sec:NF}, the variables $w_{1,2}(t) \in \mathbb{C}$ is introduced as the complex amplitude induced by the Hopf bifurcation between 1 and 2 mode interaction.
The explicit forms of $J_1$, $J_2$, $K_1$ and $K_2$ are
\begin{align*}
J_1 &= \sigma_1 - 2 \mathrm{Re}\, \left( \dfrac{U_1 V_1}{\lambda_{1,2}} \right), & J_2 &= V_2 - 2 \mathrm{Re}\, \left( \dfrac{U_2 V_1}{\lambda_{1,2}} \right),\\
K_1 &= U_3 + \dfrac{\bar U_1 U_2 - U_1 U_4}{\lambda_{1,2}} - \dfrac{U_1 V_1}{2 \nu_0 - \lambda_{1,2}}, & K_2 &= U_5 + \dfrac{|U_2|^2 - U_2 U_4}{\lambda_{1,2}} - \dfrac{2 |\tilde U_4|^2}{2 \bar \lambda_{1,2} - \lambda_{1,2}},
\end{align*}
where
\begin{align*}
U_j &= \dfrac{1}{2 \mathrm{i} \omega_{1,2}} \left( \dfrac{\nu_{11} + \mathrm{i} \omega_{1,2}}{\nu_{12}} P_j + Q_j \right), \quad \tilde U_4 = \dfrac{1}{2 \mathrm{i} \omega_{1,2}} \left( \dfrac{\nu_{11} + \mathrm{i} \omega_{1,2}}{\nu_{12}} \bar P_4 + \bar Q_4 \right),\\
V_1 &= 2 \left\{ z_1^* d_{01} \nu_{12} - z_2^* d_{02} (\nu_{11} - \mathrm{i} \omega_{1,2}) \right\} \\
V_2 &= 2 \nu_{12} (d_{01} \nu_{12} - d_{02} \nu_{21})\\
P_1 &= d_{10} z_1^* \\
P_2 &= 2 \left\{3 d_{11} z_1^* \nu_{12}^2 + d_{12} z_1^* (\nu_{11}^2 + \omega_{1,2}^2) - (2 d_{12} z_2^* + 1) \nu_{11} \nu_{12} \right\}\\
P_3 &= \nu_{12} d_{10}\\
P_4 &= 3 \nu_{12}^2 d_{11} z_1^* + (2 \nu_{11}^2 - 2 \nu_{11} \mathrm{i} \omega_{1,2} + \nu_{12} \nu_{21}) d_{12} z_1^* + \nu_{12} (2 d_{12} z_2^* + 1) (\mathrm{i} \omega_{1,2} - \nu_{11}),\\
P_5 &= \nu_{12} \left\{ 3 d_{11} \nu_{12}^2 + d_{12} (3 \nu_{11}^2 + \omega_{1,2}^2 - 2 \mathrm{i} \omega_{1,2} \nu_{11} ) \right\}\\
Q_1 &= d_{20} z_2^*,\\
Q_2 &= 2 \left\{ (d_{21} z_2^* - 1) \nu_{12}^2 + 3 \sigma_2 z_2^* (\nu_{11}^2 + \omega_{1,2}^2) - 2 d_{21} z_1^* \nu_{11} \nu_{12} \right\}\\
Q_3 &= d_{20} (\mathrm{i} \omega_{1,2} - \nu_{11})\\
Q_4 &= \nu_{12}^2 (d_{21} z_2^* - 1) + (2 \nu_{11}^2 - 2 \nu_{11} \mathrm{i} \omega_{1,2} + \nu_{12} \nu_{21}) 3 \sigma_2 z_2^* + 2 d_{21} z_1^* \nu_{12} (\mathrm{i} \omega_{1,2} - \nu_{11}),\\
Q_5 &= d_{21} \nu_{12}^2 (\mathrm{i} \omega_{1,2} - 3 \nu_{11}) + 3 \sigma_2 (\mathrm{i} \omega_{1,2} - \nu_{11})(\omega_{1,2}^2 + \nu_{11}^2).
\end{align*}
Moreover, by using the polar coordinate $w_{1,2} = \rho_{1,2}(t) \e^{\I \theta_{1,2}(t)}$, recovering the azimuthal component of $z_0$, and truncating up to third-order, we obtain 
\begin{equation}\label{eq:hopf-hopf2}
\begin{cases}
\dot \rho_0 = (\nu_0 + J_1 \rho_0^2 +  J_2 \rho_{1,2}^2) \rho_0,\\ 
\dot \rho_{1,2} = (\nu_{1,2} + \mathrm{Re}\, K_1 \rho_0^2 + \mathrm{Re}\, K_2 \rho_{1,2}^2 ) \rho_{1,2},\\ 
\dot \theta_0 = \omega_0 + \mathcal{O}_2(\rho_0, \rho_{1,2}, \theta_0, \theta_{1,2}),\\
\dot \theta_{1,2} = \omega_{1,2} + \mathcal{O}_2(\rho_0, \rho_{1,2}, \theta_0, \theta_{1,2}),
\end{cases}
\end{equation}
which is equivalent to the normal form for the Hopf--Hopf bifurcation.

According to the computation given in \cite{GH}, we rescale \eqref{eq:hopf-hopf2} by $(\rho_0, \rho_{1,2}) \mapsto (\rho_0/\sqrt{|J_1|}, \rho_{1,2}/\sqrt{|\mathrm{Re}\, K_2|})$.
If $J_1 > 0$ holds, then by setting $b = J_2/|\mathrm{Re}\, K_2|, \ c = \mathrm{Re}\, K_1/|J_1|, \ d = \sigma(\mathrm{Re}\, K_2)$, we have
\begin{equation}\label{eq:hopf-hopf3}
\begin{cases}
\dot \rho_0 = (\nu_0 + \rho_0^2 + b \rho_{1,2}^2) \rho_0,\\
\dot \rho_{1,2} = (\nu_{1,2} + c \rho_0^2 + d \rho_{1,2}^2) \rho_{1,2},\\
\dot \theta_0 = \omega_0 + \mathcal{O}_2,\\
\dot \theta_{1,2} = \omega_{1,2} + \mathcal{O}_2.
\end{cases}
\end{equation}
While if $J_1 < 0$ holds, then by setting $b = - J_2/|\mathrm{Re}\, K_2|, \ c = \mathrm{Re}\, K_1/|J_1|, \ d = - \sigma(\mathrm{Re}\, K_2), \ \nu_0 \mapsto - \nu_0, \ \nu_{1,2} \mapsto - \nu_{1,2}, \ t \mapsto -t$, we obtain the same system.
Once more, by ignoring the azimuthal components, we get the reduced planer system
\begin{equation}\label{eq:hopf-hopf_plane}
\begin{cases}
\dot \rho_0 = (\nu_0 + \rho_0^2 + b \rho_{1,2}^2) \rho_0, \\
\dot \rho_{1,2} = (\nu_{1,2} + c \rho_0^2 + d \rho_{1,2}^2) \rho_{1,2}.
\end{cases}
\end{equation}
For the system \eqref{eq:hopf-hopf_plane}, there are possibly four equilibria as follows:
\begin{align*}
\tilde E_1 &= (0, 0), & \text{for all} \ \nu_0, \ \nu_{1,2},\\
\tilde E_2 &= (\sqrt{- \nu_0}, 0), & \text{for} \ \nu_0 < 0,\\
\tilde E_3 &= \left(0, \sqrt{-\dfrac{\nu_{1,2}}{d}} \right), & \text{for} \ \nu_{1,2} d < 0,\\
\tilde E_4 &= \left( \sqrt{\dfrac{b \nu_{1,2} - d \nu_0}{\tilde\Delta}}, \sqrt{\dfrac{c \nu_0 - \nu_{1,2}}{\tilde\Delta}} \right) 
&\text{for} \ \dfrac{b \nu_{1,2} - d \nu_0}{\tilde\Delta} > 0 \ \text{and} \ \dfrac{c \nu_0 - \nu_{1,2}}{\tilde\Delta} > 0,
\end{align*}
where $\tilde\Delta = d- bc$.
The equilibrium $\tilde E_1$ is $1:2$ mixed mode stationary solution, i.e., MM$_{1:2}$.
The equilibrium $\tilde E_2$ corresponds to the 0-mode which induces time periodic solutions, whose leading term is formally written by
\begin{align*}
u &= \tilde u + \varepsilon_1 \Theta(t) + \tilde \varepsilon_1 \cos \dfrac{\pi x}{L} + \tilde \varepsilon_2 \cos \dfrac{2 \pi L}{x},\\
v &= \tilde v + \varepsilon_2 \Theta(t) + \tilde \varepsilon_3 \cos \dfrac{\pi x}{L} + \tilde \varepsilon_4 \cos \dfrac{2 \pi L}{x},
\end{align*}
where $\varepsilon_{j} = \mathcal{O}(\sqrt{|\nu_0|})$ $(j = 1,2)$ and $\tilde\varepsilon_j = \mathcal{O}(\rho^2)$ $(j = 1,2,3,4)$.
The equilibrium $\tilde E_3$ is the limit cycle which consists of the interaction between 1 and 2 mode solution.
$\tilde E_4$ is invariant two-torus, whose periods are approximately $2 \pi/\omega_0$ and $2 \pi/\omega_{1,2}$. 

\begin{table}[hbtp]
 \caption{Classification of the unfoldings of \eqref{eq:hopf-hopf_plane}.
 The notations ($-$) and ($+$) mean that the sign of $\tilde \Delta$ is fully determined for the given signs of $d$, $b$ and $c$ (see \cite{GH}).}\label{table:HH}
  \centering
  \begin{tabular}{lllllllllllll}
    \hline
    Case   & Ia   & Ib   & II   & III  & IVa    & IVb   & V    & VIa  & VIb   & VIIa   & VIIb & VIII \\
    \hline
    $d$    & $+1$ & $+1$ & $+1$ & $+1$ &  $+1$  & $+1$  & $-1$ & $-1$ &  $-1$ &  $-1$  & $-1$ & $-1$  \\
    $b$    &  $+$ &  $+$ & $+$  & $-$  &   $-$  &  $-$  & $+$  & $+$  &  $+$  &  $-$   & $-$  & $-$   \\
    $c$    &  $+$ &  $+$ & $-$  & $+$  &   $-$  &  $-$  & $+$  & $-$  &  $-$  &  $+$   & $+$  & $-$   \\
  $\tilde \Delta$ &  $+$ &  $-$ & $+$  & $+$  &   $+$  &  $-$  & $-$  & $+$  &  $-$  &  $+$   & $-$  & $-$   \\
    \hline
  \end{tabular}
\end{table}
As known in the literature (\cite{GH}), the system \eqref{eq:hopf-hopf_plane} has twelve distinct types of unfoldings, and the bifurcation diagrams and phase portraits were sketched in the section~7.5 of \cite{GH}.
For the convenience of application to the numerical example given in Section~\ref{sec:numerical}, we briefly mention Case VIa, that is, with $d = -1$, $b > 0$, $c < 0$ and $\tilde \Delta > 0$.

By simple calculation, we find that the equilibrium $\tilde E_4$ has Hopf instability point for $\nu_0 < 0$.
Indeed, the linearized matrix of \eqref{eq:hopf-hopf_plane} at $\tilde E_4$ is given by
\[
\begin{pmatrix}
\nu_0 + 3 \rho_0^2 + b \rho_{1, 2}^2 & 2 b \rho_0 \rho_{1,2} \\
2 c \rho_0 \rho_{1,2} & \nu_{1,2} + c \rho_0^2 + 3 d \rho_{1,2}^2
\end{pmatrix},
\]
and thus, the matrix has a pair of purely imaginary eigenvalues on the line $\nu_{1,2} = (c - 1)\nu_0/(b + 1)$.
However, the third-order approximation to the normal form is not enough to determine the stability of periodic orbits ($\tilde{\mathrm{PO}}$) bifurcating from $\tilde E_4$, since the system is integrable for the values on the Hopf instability.
Hence, \eqref{eq:hopf-hopf_plane} has a family of periodic orbits that ends in a degenerate heteroclinic cycle (see Figure~7.5.6 in \cite{GH}).
To unfold and calculate the curves of Hopf bifurcation and the heteroclinic cycle connecting $\tilde E_1$, $\tilde E_2$ and $\tilde E_3$ as well as their stability, we have to calculate the explicit form of the fifth order coefficients, that is, we need to compute the fifth order reduced system on the center manifold.
This derivation is quite hard because we need the approximations of the functions $h_m^u$, $h_m^v$ and $h_i^\beta$ up to the fourth order terms.

From the symmetric properties of the system \eqref{eq:hopf-hopf_plane}, we can formally restore the truncated quintic terms such as
\begin{equation}\label{eq:hopf-hopf_plane_quintic}
\begin{cases}
\dot \rho_0 = (\nu_0 + \rho_0^2 + b \rho_{1,2}^2) \rho_0 + (e_{11} \rho_0^4 + e_{12} \rho_0^2 \rho_{1,2}^2 + e_{13} \rho_{1,2}^4) \rho_0, \quad\\
\dot \rho_{1,2} = (\nu_{1,2} + c \rho_0^2 + d \rho_{1,2}^2) \rho_{1,2} + (e_{21} \rho_0^4 + e_{22} \rho_0^2 \rho_{1,2}^2 + e_{23} \rho_{1,2}^4) \rho_{1,2}.
\end{cases}
\end{equation}
By applying a degree three coordinate change appropriately, we can choose a coordinate system in which all but one of these six coefficients $e_{11}, \dots, e_{23}$ are zero.
Thus, we can take $e_{11} = e_{12} = e_{13} = e_{21} = e_{22} = 0$ and study the Hopf bifurcation from $\tilde E_4$ by considering the effects of the quintic term $e_{23} \rho_{1,2}^5$.
In Table~\ref{table:1}, we summarize the correspondence of solutions between the planar system \eqref{eq:hopf-hopf_plane}, the four-dimensional system \eqref{eq:NF_neumann} and the reaction-diffusion system \eqref{2rd}.
Note that there are two rotations $\dot \theta_0 \approx \omega_0$ and $\dot \theta_{1,2} \approx \omega_{1,2}$ which should be restored to elucidate the dynamics of the solutions.
\begin{table}[hbtp]
 \caption{Classification of equilibria and solution correspondence.}\label{table:1}
\centering
  \begin{tabular}{lll}
    \hline
    2D-system \eqref{eq:hopf-hopf_plane} & 4D-system \eqref{eq:NF_neumann} & R.D. system \eqref{2rd} \\
    \hline
    $\tilde E_1$ & equilibrium & 1:2 mixed mode stationary sol. \\
    $\tilde E_2$ & periodic orbit & uniform osci. + 1:2 stationary sol. \\
    $\tilde E_3$ & periodic orbit & 1:2 mixed mode osci. \\
    $\tilde E_4$ & invariant 2-torus & uniform osci. + 1:2 mixed mode osci. \\
    $\tilde{\mathrm{PO}}$ & invariant 3-torus & \\
    \hline
  \end{tabular}
\end{table}


\subsubsection{Bifurcation from a 0:2 mixed mode equilibrium}\label{sec:02}

On the invariant subspace $\mathcal{S}_2$, the system \eqref{eq:012NF} is reduced into
\begin{equation}\label{eq:02planar}
\begin{cases}
    \dot r_0 = (\mu_0 + \sigma_1 r_0^2 + d_{02} z_2^2) r_0,\\
    \dot z_2 = (\mu_2 + d_{20} r_0^2 + \sigma_2 z_2^2) z_2,
\end{cases}
\end{equation}
which is equivalent to the system \eqref{eq:hopf-hopf_plane}.
As we have already seen in the previous section, the above system possibly has four equilibria, namely 
\begin{align*}
\hat E_1 &: (r_0, z_2) = (0, 0),\\
\hat E_2 &: (r_0, z_2) = \left( \sqrt{-\dfrac{\mu_0}{\sigma_1}}, 0 \right)  \ \text{for} \ \sigma_1 \mu_0 < 0,\\
\hat E_3 &: (r_0, z_2) = \left( 0, \pm\sqrt{-\dfrac{\mu_2}{\sigma_2}} \right) \ \text{for} \ \sigma_2 \mu_2 < 0,\\
\hat E_{4}^{\pm} &: (r_0, z_2) = \left(\sqrt{\dfrac{d_{02} \mu_2 - \sigma_2 \mu_0}{\hat\Delta}},  \pm\sqrt{\dfrac{d_{20} \mu_0 - \sigma_1 \mu_2}{\hat\Delta}} \right)\\
& \text{for} \ \dfrac{d_{02} \mu_2 - \sigma_2 \mu_0}{\hat\Delta} > 0 \ \text{and} \ \dfrac{d_{20} \mu_0 - \sigma_1 \mu_2}{\hat\Delta} > 0,
\end{align*}
where $\hat\Delta := \sigma_1 \sigma_2 - d_{02} d_{20}$, and $\hat E_{4}^{\pm}$ correspond to periodic orbit that possess the Hopf--zero instability under a certain condition.
Indeed, the linearized matrix of \eqref{eq:012NF} at $\hat E_{4}^{\pm}$ becomes a form of
\[
\begin{pmatrix}
2\sigma_1 r_0^2 & 0 & 2 d_{02}r_0 z_2 \\
0 & \mu_1 + d_{10} r_0^2 + d_{12} z_2^2 + z_2 & 0 \\
2 d_{20} r_0 z_2 & 0 & 2 \sigma_2 z_2^2
\end{pmatrix}
\]
and the matrix has a pair of purely imaginary eigenvalues at $\mu_0 = (d_{02} + \sigma_1)\mu_2/(d_{20} + \sigma_2)$ in the case where
\[
\sigma_1 = 1, \quad d_{02} > 0, \quad d_{20} < 0, \quad \sigma_2 = -1 \quad \text{and} \quad \hat\Delta > 0 \quad \text{(case VIa in \cite{GH})}
\]
or
\[
\sigma_1 = -1, \quad d_{02} < 0, \quad d_{20} > 0, \quad \sigma_2 = 1 \quad \text{and} \quad \hat\Delta > 0 \quad \text{(case VIa in \cite{DIR})}.
\]
In addition, if $\mu_1 = -(d_{10} r_0^2 + d_{12} z_2^2 + z_2)$ holds simultaneously, the matrix has totally the Hopf--zero singularity.
Therefore, we expect the interaction between Hopf bifurcation induced by $0:2$ mode interaction and pitchfork bifurcation induced by $1$ mode.
This implies that the Hopf--picthfork bifurcation may occur at $\hat E_{4}^{\pm}$, however, to classify the bifurcation structures from $\hat E_{4}^{\pm}$, we again encounter the same difficulty mentioned above, that is, we need to calculate up to the fifth-order terms of \eqref{eq:normalform}.
Generically, the normal form for the Hopf--pitchfork bifurcation in cylindrical coordinate is given by
\begin{equation}\label{eq:hopf-pitch}
\begin{cases}
\dot \rho_{0,2} = (\nu_{0, 2} + \check J_1 \rho_{0, 2}^2 + \check J_2 X_1^2) \rho_{0,2},\\ 
\dot X_1 = (\nu_{1} + \check K_1 \rho_{0, 2}^2 + \check K_2 X_1^2 ) X_1,\\ 
\dot \theta_{0, 2} = \omega_{0, 2} + \mathcal{O}_2(\rho_{0,2}, X_1, \theta_{0, 2}),
\end{cases}
\end{equation}
and hence, the planar system of \eqref{eq:hopf-pitch} is similar to \eqref{eq:hopf-hopf_plane}.
Therefore, the planar system of \eqref{eq:hopf-pitch} posesses four equilibria $\check E_j$ $(j = 1, 2, 3, 4)$, and the bifurcation diagrams and phase portraits can be classified into twelve cases (see Table~\ref{table:HH}).
Furthermore, for Case VIa, by restoring the quintic terms to \eqref{eq:hopf-pitch}, we can obtain a periodic orbit ($\check{\mathrm{PO}}$) which bifurcates from $\check{E}_4^{\pm}$.
In Table~\ref{table:2}, we list the classification of equilibria and solution correspondence.
\begin{table}[hbtp]
 \caption{Classification of equilibria and solution correspondence.}\label{table:2}
 \centering
  \begin{tabular}{lll}
    \hline
    \eqref{eq:hopf-pitch} & \eqref{eq:NF_neumann} & \eqref{2rd} \\
    \hline
    $\check E_1$ & periodic orbit & stationary 2-mode sol. + uniform osci. \\
    $\check E_2$ & invariant 2-torus & uniform osci. + 0:2 mixed mode osci. \\
    $\check E_3$ & periodic orbit & stationary 1-mode sol. + 0:2 mixed mode osci. \\
    $\check E_{4}^{\pm}$ & invariant 2-torus & uniform osci. + 0:2 mixed mode osci. + stationary 1-mode sol.\\
    $\check{\mathrm{PO}}$ & invariant 3-torus & \\
    \hline
  \end{tabular}
\end{table}

\subsection{Numerical experiments to the normal form}\label{sec:numerical}

In this subsection, we numerically solve the reduced system \eqref{eq:012NF} for the particular case, and visualize the periodic orbits, invariant tori, heteroclinic cycle and chaotic attractors.
We set the coefficients of \eqref{eq:012NF} as
\begin{equation}\label{coef:real_normal}
\begin{gathered}
\sigma_j = -1 \ (j = 1,2,3), \quad d_{01} = 3.0, \quad d_{02} = 3.0, \quad d_{10} = -3.0, \\
d_{11} = -1.0, \quad d_{12} = -3.0, \quad d_{20} = -3.0, \quad d_{21} = 3.0.
\end{gathered}
\end{equation}
On the invariant subspace $\mathcal{S}_2$, the behavior of the solution follows Case~III in the section~7.5 of \cite{GH}, namely, Hopf bifurcation cannot occur at $\tilde E_{4}$.
By putting $\rho = 0.5$, we have $J_1 = 1.78182\cdots$, $J_2 = 0.0165818\cdots$, $\mathrm{Re}\, K_1 = -5.00455\dots$, $\mathrm{Re}\, K_2 = -0.00512727\cdots$, and hence $b = 3.23404\cdots$, $c = -2.80867\cdots$, $d = -1$ and $-1 - b c = 8.08337 \cdots > 0$ hold.
Therefore, the classification of the bifurcation structure to the system \eqref{eq:hopf-hopf_plane} is included in Case~VIa.
The Hopf--Hopf bifurcation point at MM$_{1:2}$ is 
\[
(r_0, z_1, z_2, \mu_0, \mu_1, \mu_2) = (0, 0.05, 0.1, -0.0375, -0.0175, 0.0275),
\]
and around this point, we can find the limit cycle, heteroclinic cycle and chaotic attractors in $(r_0, z_1, z_2)$-space as shown in Figs.~\ref{fig:12hopf}--\ref{fig:049}.
Note that the green dots in Figs.~\ref{fig:12hopf} and \ref{fig:chaos} represent the initial conditions.

In Figs.~\ref{fig:12hopf} and \ref{fig:torus}, we have fixed $(\mu_1, \mu_2) = (-0.07, 0.035)$ and $\mu_0$ varies in the interval $[-0.07, -0.04906]$.
Fig.~\ref{fig:12hopf} shows the solution orbit in $(r_0, z_1, z_2)$-space, and (a), (b) and (c) in Fig.~\ref{fig:12hopf} correspond to $\tilde E_3$, $\tilde E_{4}$ and $\tilde{\mathrm{PO}}$, respectively.
Note that we should restore the rotation component $\theta_0$ to elucidate the solution orbit of the full system \eqref{eq:normalform}.
That is, one must notice that in the sense of \eqref{eq:normalform}, the Figs.~\ref{fig:12hopf}(b) and (c) correspond to the invariant 2-torus and invariant 3-torus, respectively.

\begin{figure}[htbp]
\centering
\begin{tabular}{ccc}
\includegraphics[width=0.32\columnwidth]{./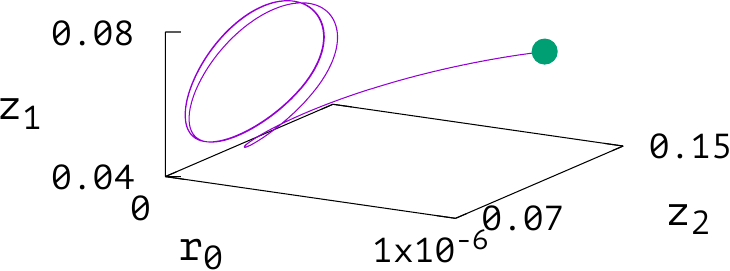}&
\includegraphics[width=0.32\columnwidth]{./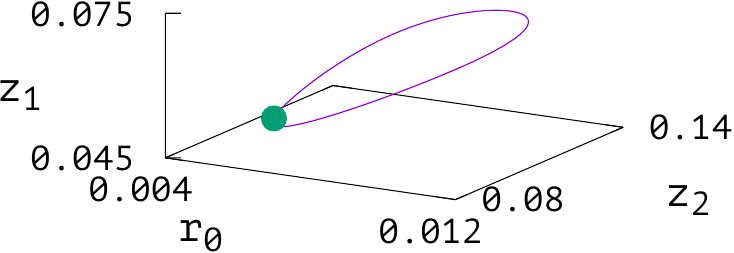}&
\includegraphics[width=0.32\columnwidth]{./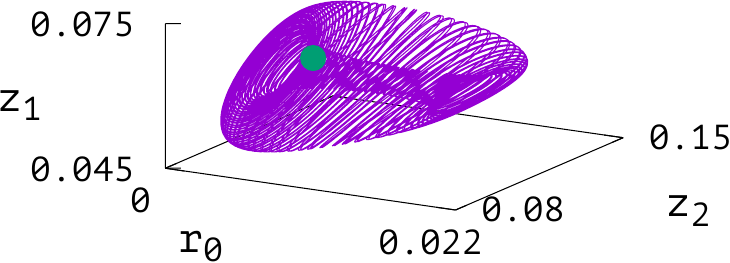}\\
(a) $\mu_0 = -0.07$. & (b) $\mu_0 = -0.05$ & (c) $\mu_0 = -0.04906$
\end{tabular}
\caption{The numerical results for the reduced system \eqref{eq:012NF} under the parameter settings \eqref{coef:real_normal}.}\label{fig:12hopf}
\end{figure}

Furthermore, various projections, time evolutions of $r_0(t)$, $z_1(t)$, $z_2(t)$ and the $\mathbb{R}^3$-norm of Fig.~\ref{fig:12hopf}(c) are shown in Fig.~\ref{fig:torus}.
Figs.~\ref{fig:torus}(a), (b) and (c) show projections of the orbit onto $(r_0, z_1)$, $(r_0, z_2)$ and $(z_1, z_2)$-planes, respectively.
\begin{figure}[htbp]
  \centering
  \begin{tabular}{ccc}
    \includegraphics[width=0.31\columnwidth]{./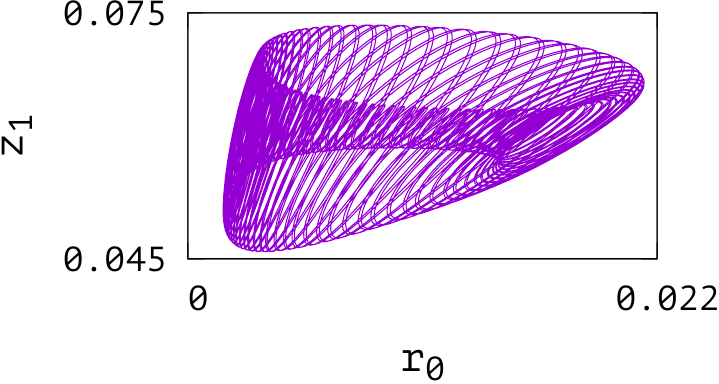}&
    \includegraphics[width=0.31\columnwidth]{./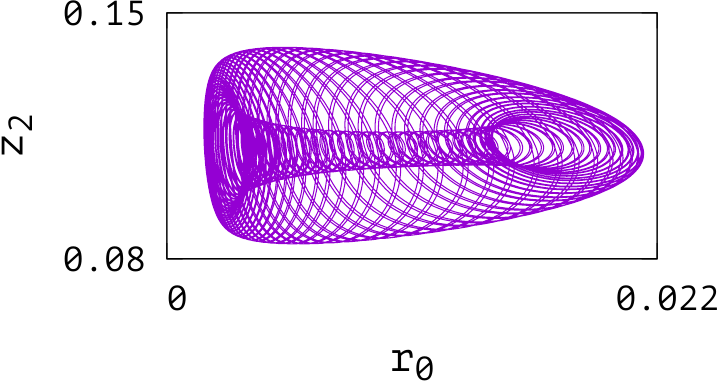}&
    \includegraphics[width=0.31\columnwidth]{./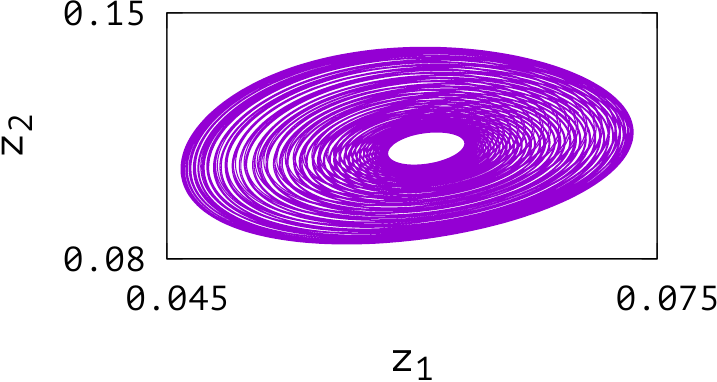}\\
    (a) & (b) & (c)\\
    \includegraphics[width=0.32\columnwidth]{./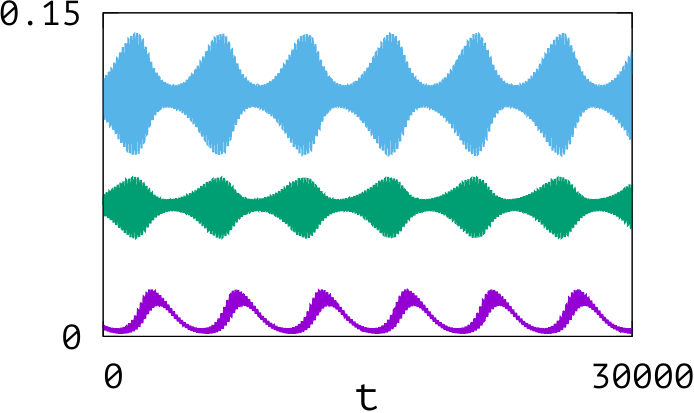}&
    \includegraphics[width=0.32\columnwidth]{./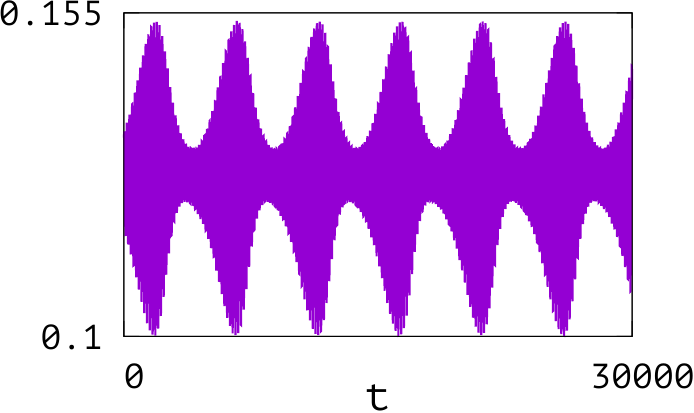}& \\
    (d) & (e) & 
  \end{tabular}
\caption{The detailed description of the solution in Fig.~\ref{fig:12hopf}(c).
(a) Projection onto $(r_0, z_1)$-plane.
(b) Projection onto $(r_0, z_2)$-plane.
(c) Projection onto $(z_1, z_2)$-plane.
(d) Numerical data ($r_0(t)$: purple, $z_1(t)$: green, $z_2(t)$: light blue).
(e) $\mathbb{R}^3$-norm of the numerical solution.}\label{fig:torus}
\end{figure}

We can numerically find that the system \eqref{eq:012NF} has a heteroclinic orbit connecting O and PM$_2^+$ on $\mathcal{S}_1$, as shown in Fig.~\ref{fig:chaos}(a).
This connecting orbit can be observed by setting the parameter values so that the amplitude of the periodic orbit in Fig.~\ref{fig:12hopf}(a) increases.
In the following, when the parameters are fixed as $(\mu_1, \mu_2) = (-0.0625, 0.035)$ and $\mu_0$ varies in the interval $[-0.08, -0.045]$, 
we investigate how the orbit changes according to the parameter value $\mu_0$ and what kind of solution behavior is observed.
\begin{figure}[htbp]
  \centering
  \begin{tabular}{ccc}
    \includegraphics[width=0.31\columnwidth]{./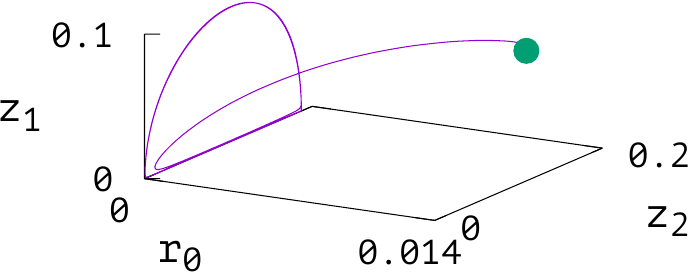}&
    \includegraphics[width=0.31\columnwidth]{./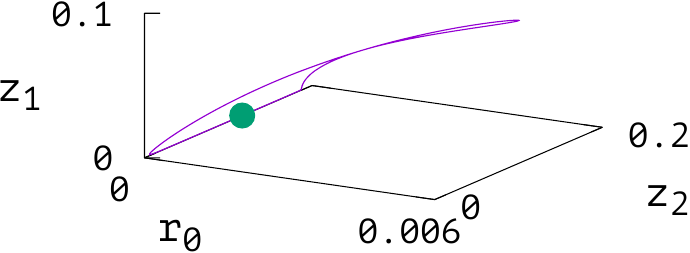}&
    \includegraphics[width=0.31\columnwidth]{./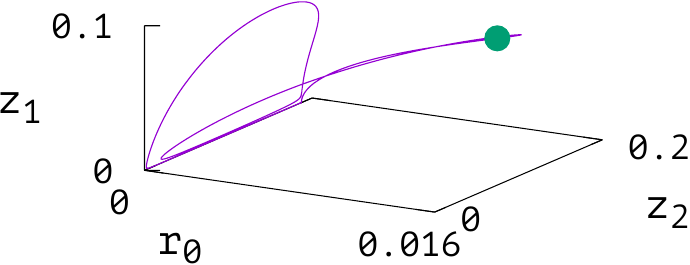}\\
    (a) $\mu_0 = -0.08$. & (b) $\mu_0 = -0.07$. & (c) $\mu_0 = -0.069$. \\
    \includegraphics[width=0.31\columnwidth]{./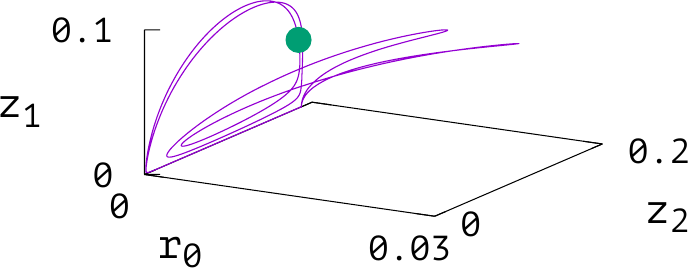}&
    \includegraphics[width=0.31\columnwidth]{./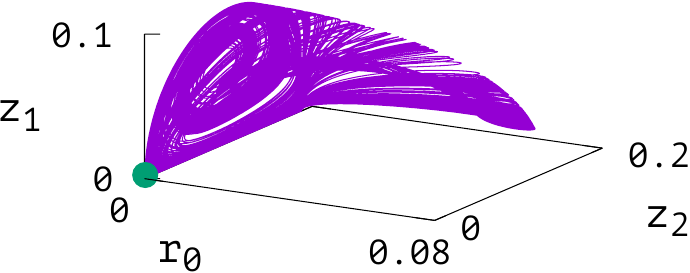}&
    \includegraphics[width=0.31\columnwidth]{./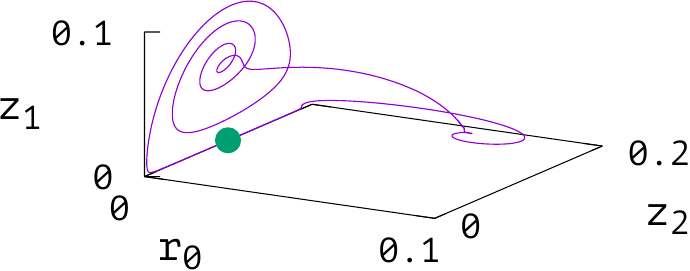}\\
    (d) $\mu_0 = -0.068$ & (e) $\mu_0 = -0.063$ & (f) $\mu_0 = -0.05$. \\
    \includegraphics[width=0.31\columnwidth]{./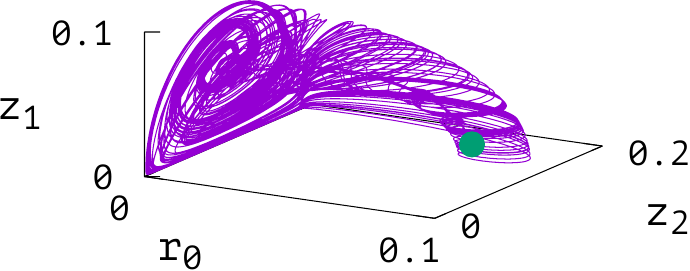}&
    \includegraphics[width=0.31\columnwidth]{./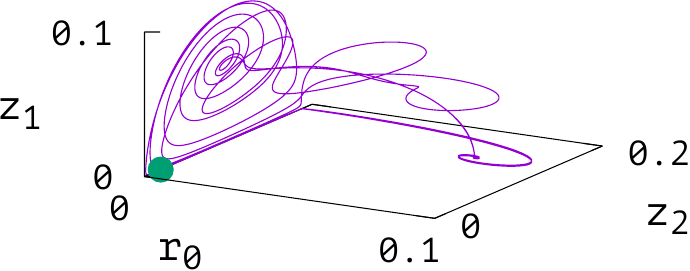}&
    \includegraphics[width=0.31\columnwidth]{./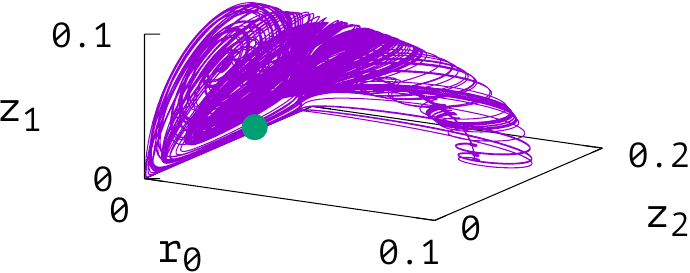}\\
    (g) $\mu_0 = -0.0495$. & (h) $\mu_0 = -0.0491$ & (i) $\mu_0 = -0.049$. \\
    \includegraphics[width=0.31\columnwidth]{./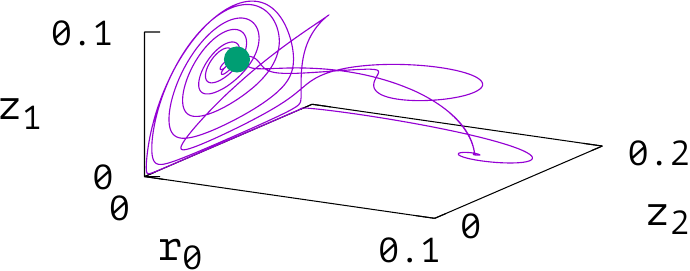}&
    \includegraphics[width=0.31\columnwidth]{./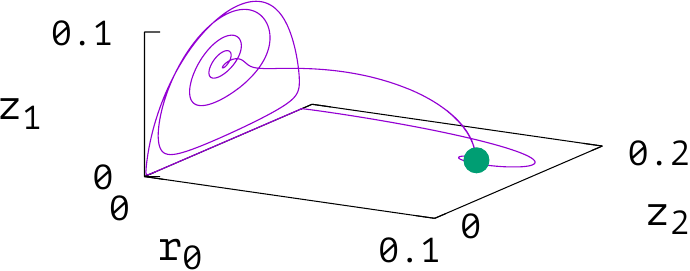}&
    \includegraphics[width=0.31\columnwidth]{./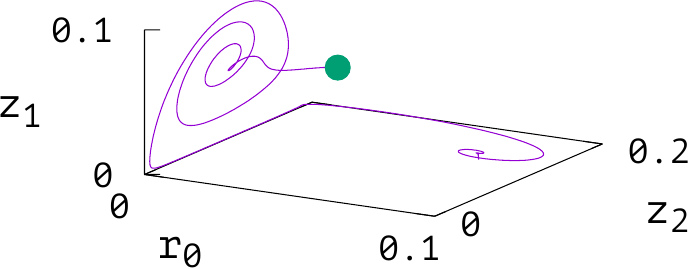}\\
    (j) $\mu_0 = -0.0488$. & (k) $\mu_0 = -0.048$ & (l) $\mu_0 = -0.045$.
    \end{tabular}
  \caption{The numerical results for the reduced system \eqref{eq:012NF} under the settings \eqref{coef:real_normal}.
    (a): Heteroclinic cycle connecting O and $\mathrm{PM}_2^+$ in $(z_1, z_2)$-plane.
    (b): Heteroclinic cycle connecting O and $\mathrm{PM}_2^+$ in $(r_0, z_1, z_2)$-space.
    (c): Period doubling orbit.
    (d): Period four orbit.
    (e): Chaotic attractor.
    (f): Heteroclinic cycle connecting O, MM$_{0:2}^+$, PM$_2^+$ and MM$_{1:2}$.
    (g): Chaotic attractor.
    (h): Chaotic heteroclinic cycle.
    (i): Chaotic attractor.
    (j): Chaotic heteroclinic cycle.
    (k): Heteroclinic cycle connecting O, MM$_{0:2}^+$, PM$_2^+$ and MM$_{1:2}$.
    (l): Orbit converging to MM$_{0:2}^+$.
    }\label{fig:chaos}
\end{figure}
Various projections and norms of (e), (g) and (l) of Fig.~\ref{fig:chaos} are shown in Figs.~\ref{fig:0063}, \ref{fig:005} and \ref{fig:049}.
The figures (a), (b) and (c) in Figs.~\ref{fig:0063}, \ref{fig:005} and \ref{fig:049} show projections of the solution onto $(r_0, z_1)$, $(r_0, z_2)$ and $(z_1, z_1)$-planes, respectively.
\begin{figure}[htbp]
\centering
\begin{tabular}{ccc}
    \includegraphics[width=0.31\columnwidth]{./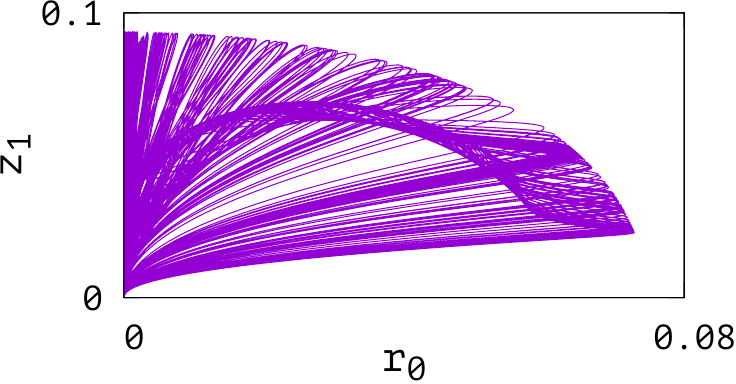}&
    \includegraphics[width=0.31\columnwidth]{./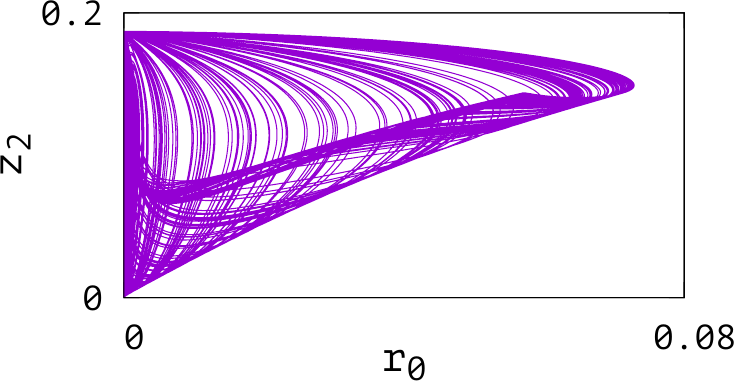}&
    \includegraphics[width=0.31\columnwidth]{./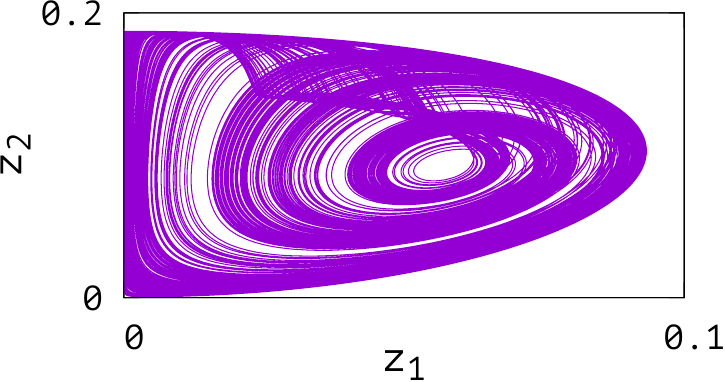}\\
    (a) & (b) & (c) \\
    \includegraphics[width=0.31\columnwidth]{./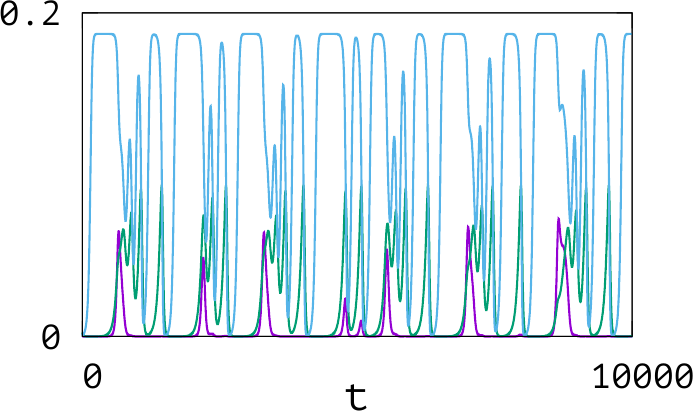}&
    \includegraphics[width=0.31\columnwidth]{./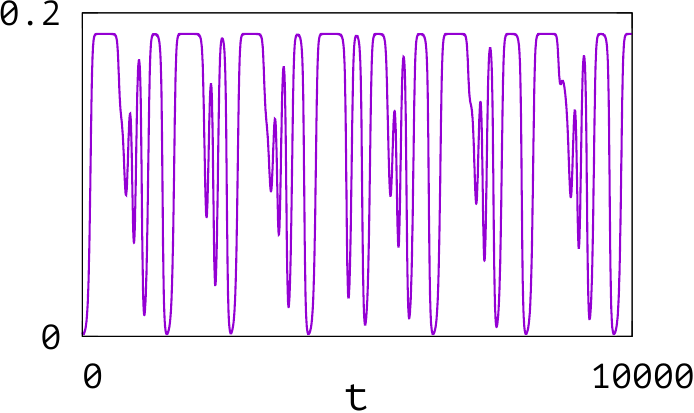}& \\
    (d) & (e)  
   \end{tabular}
\caption{Various projections and norms at $\mu_0 = - 0.063$. 
(a) Projection onto $(r_0, z_1)$-plane.
(b) Projection onto $(r_0, z_2)$-plane.
(c) Projection onto $(z_1, z_2)$-plane.
(d) Numerical data ($r_0(t)$: purple, $z_1(t)$: green, $z_2(t)$: light blue).
(e) $\mathbb{R}^3$-norm of the numerical solution.}\label{fig:0063}
\end{figure}
\begin{figure}[htbp]
\centering
\begin{tabular}{ccc}
    \includegraphics[width=0.31\columnwidth]{./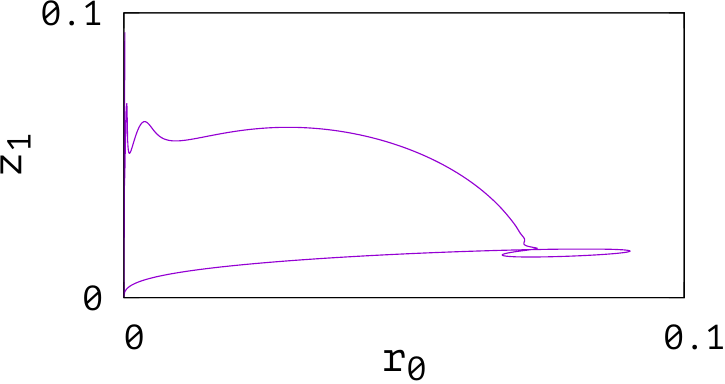}&
    \includegraphics[width=0.31\columnwidth]{./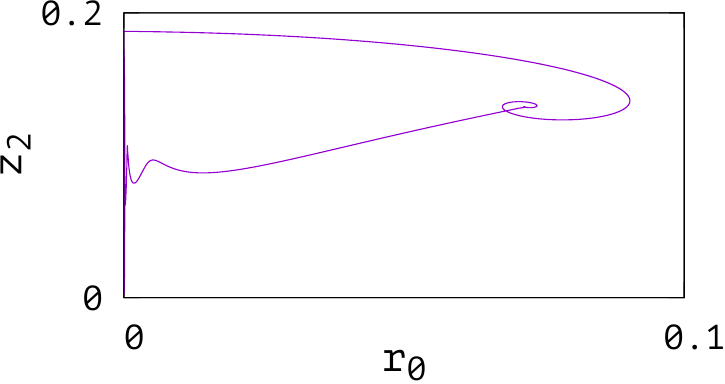}&
    \includegraphics[width=0.31\columnwidth]{./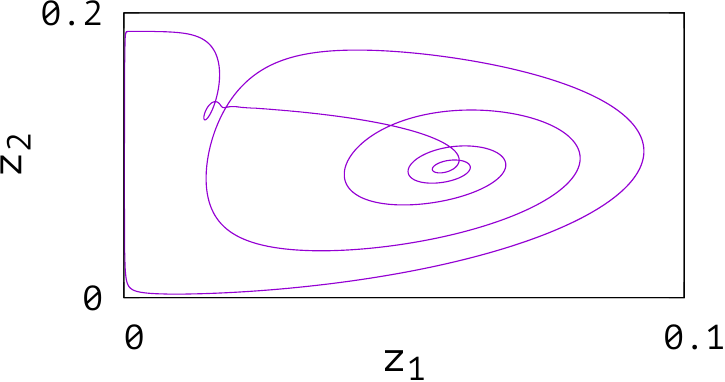}\\
    (a) & (b) & (c) \\
    \includegraphics[width=0.31\columnwidth]{./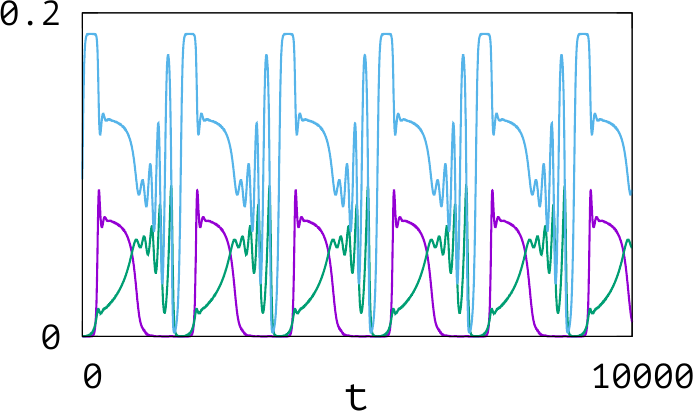}&
    \includegraphics[width=0.31\columnwidth]{./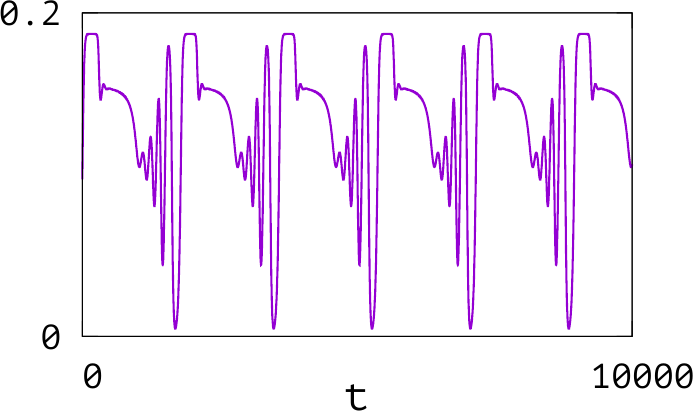}& \\
    (d) & (e) 
   \end{tabular}
\caption{Various projections and norms at $\mu_0 = - 0.05$.
(a) Projection onto $(r_0, z_1)$-plane.
(b) Projection onto $(r_0, z_2)$-plane.
(c) Projection onto $(z_1, z_2)$-plane.
(d) Numerical data ($r_0(t)$: purple, $z_1(t)$: green, $z_2(t)$: light blue).
(e) $\mathbb{R}^3$-norm of the numerical solution.}\label{fig:005}
\end{figure}
\begin{figure}[htbp]
\centering
\begin{tabular}{ccc}
    \includegraphics[width=0.32\columnwidth]{./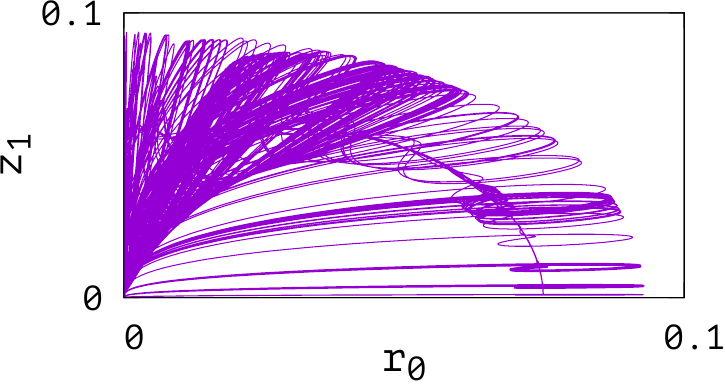}&
    \includegraphics[width=0.32\columnwidth]{./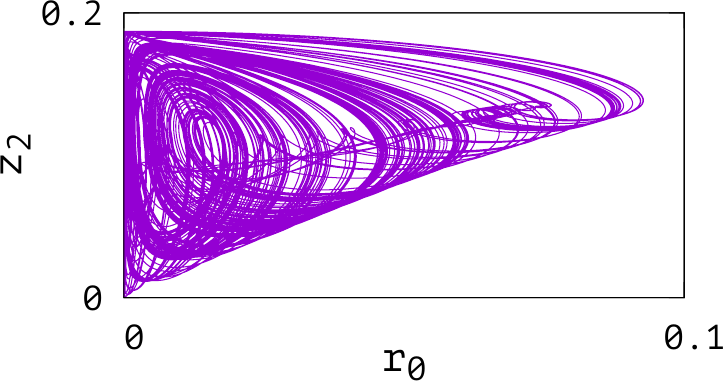}&
    \includegraphics[width=0.32\columnwidth]{./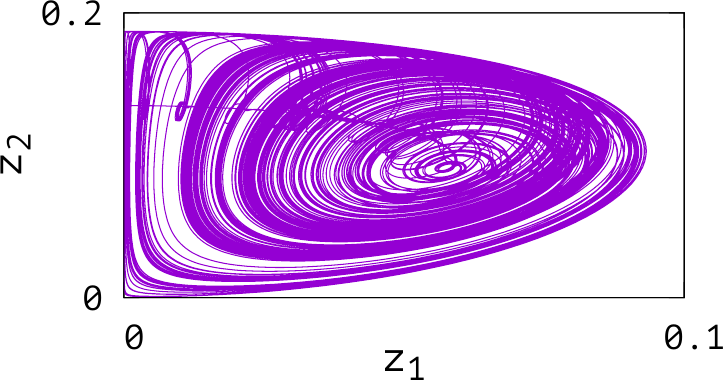}\\
    (a) & (b) & (c) \\
    \includegraphics[width=0.32\columnwidth]{./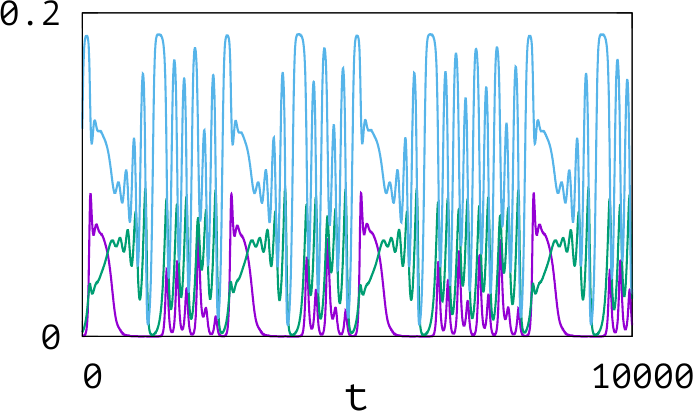}&
    \includegraphics[width=0.32\columnwidth]{./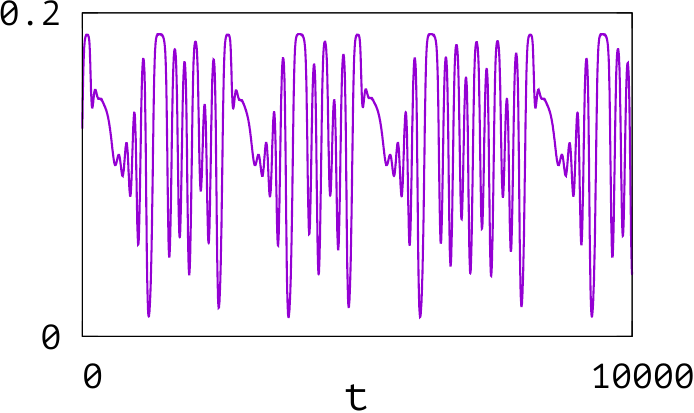}& \\
    (d) & (e) & 
    \end{tabular}
\caption{Various projections and norms at $\mu_0 = - 0.049$.
(a) Projection onto $(r_0, z_1)$-plane.
(b) Projection onto $(r_0, z_2)$-plane.
(c) Projection onto $(z_1, z_2)$-plane.
(d) Numerical data ($r_0(t)$: purple, $z_1(t)$: green, $z_2(t)$: light blue).
(e) $\mathbb{R}^3$-norm of the numerical solution.}\label{fig:049}
\end{figure}

Fig.~\ref{fig:chaos} strongly suggests the existence of heteroclinic cycles and strange attractors in \eqref{eq:normalform}, and therefore, we emphasize that the Hopf--Turing--Turing instability may potentially induce the existence of strange attractors in the reaction-diffusion system \eqref{2rd} with \eqref{BC}.

Theoretical analysis to the existence of the strange attractor in the infinite dimensional dynamical system, that is, to construct a horseshoe map for the heteroclinic orbits as in Figs.\ref{fig:chaos}(f) or (k) and clarity a condition to the existence of horseshoe like Shilnikov's condition remain as interesting tasks for the future.
Furthermore, detailed numerical explorations to \eqref{eq:normalform} or \eqref{2rd} near the Hopf--Turing--Turing bifurcation point are also our interesting future works.

\section{Concluding remarks}\label{sec:Conclude}

We considered 2-component reaction-diffusion systems, and proposed a new type of instability framework; the Hopf instability of $0$-mode and the diffusion-induced instability of $m$ and $m+1$-modes. 
The pattern dynamics around the doubly degenerate point of $m$ and $m+1$-modes and around the Hopf--Turing point have been considered so far. 
In the present paper, we investigated the dynamics around triply degenerate point from the viewpoint of $0 : m : m+1$ mode interaction, 
and revealed that bifurcation structures and dynamics are much richer than those of the codimension 2 bifurcation cases. 
Based on the dynamics of the reaction-diffusion systems, we derived the normal form for the Hopf--Turing--Turing bifurcation.
The derivation of the normal form of this type is quite new, which is one of our main results in the present paper. 
Our numerical computations of the normal form strongly suggests that the system possesses chaotic behavior of solutions when the parameter values are fixed suitably. 
Unfortunately, we generally do not know whether or not the original reaction-diffusion systems possess the corresponding chaotic behavior of solutions even though the normal form shows a chaotic behavior. 
However, it seems that complicated dynamics is potentially included in the 2-component reaction-diffusion systems. 
The further discussion of the relation between the reaction-diffusion systems and the normal form from pattern dynamics point of view is left. 

It is well known that some reaction-diffusion systems, for instance the Gray--Scott model, show a spatio-temporal chaos in one-space dimension (\cite{NU}).
The Keller--Segel model with the logistic growth, which is not classified into reaction-diffusion systems, also exhibits a chaotic behavior (\cite{PH}). 
Since the parameter regime for the occurrence of these spatio-temporal chaos is far from equilibrium, it can not be captured directly via the reduced finite dimensional dynamical system. 
However, the organizing center of a spatio-temporal chaos observed in the systems may be a triply degenerate point discussed in this paper. 
To explore the onset, detailed investigations are necessary with an aid of computer. 
Our instability framework has much potential for unveiling complicated dynamics observed in the reaction-diffusion systems. 

Finally, we adress some future works in this research direction. 
In Section~\ref{Numerics}, we focused on bifurcations from a stable time periodic solution branch with spatially homogeneity. 
On the other hand, the bifurcations from an unstable time periodic solution branch with spatially homogeneity are also possible when $\alpha>\alpha^*$, where $\alpha^*$ is the Hopf criticality of $0$-mode. 
An interaction between stationary solutions and unstable uniform oscillations for $\alpha>\alpha^*$ is unclarified. 
Investigation of bifurcation structures when $\alpha$ is used as a bifurcation parameter may be also interesting since the value $D_u$ or $D_v$ is a bifurcation parameter in Section~\ref{Numerics}. 
We derived the normal form for the Hopf--Turing--Turing bifurcation, but the analysis is quite hard because the detailed investigations of equilibria need the information on the higher order terms such as fifth order terms. 
That is hardly realized. 
We think that complementary analysis combined with rigorous analysis and numerical computation is required to reveal the dynamics inside the system. 
Moreover, if we impose the periodic boundary conditions instead of the zero flux boundary conditions, 
we can guess that the pattern dynamics is much richer. 
In fact, if we set
$D_1 = 0.014$, $D_2 = 0.1502$, $A = 0.1$, $B = 1.0$, $\alpha = 0.6$ and $L = 2.0$ in \eqref{Schnak}, 
then we numerically find the chaotic dynamics such as Fig.~\ref{fig:tth-periodic}.
\begin{figure}[htbp]
  \centering
  \begin{tabular}{ccc}
    \includegraphics[width=0.46\columnwidth]{./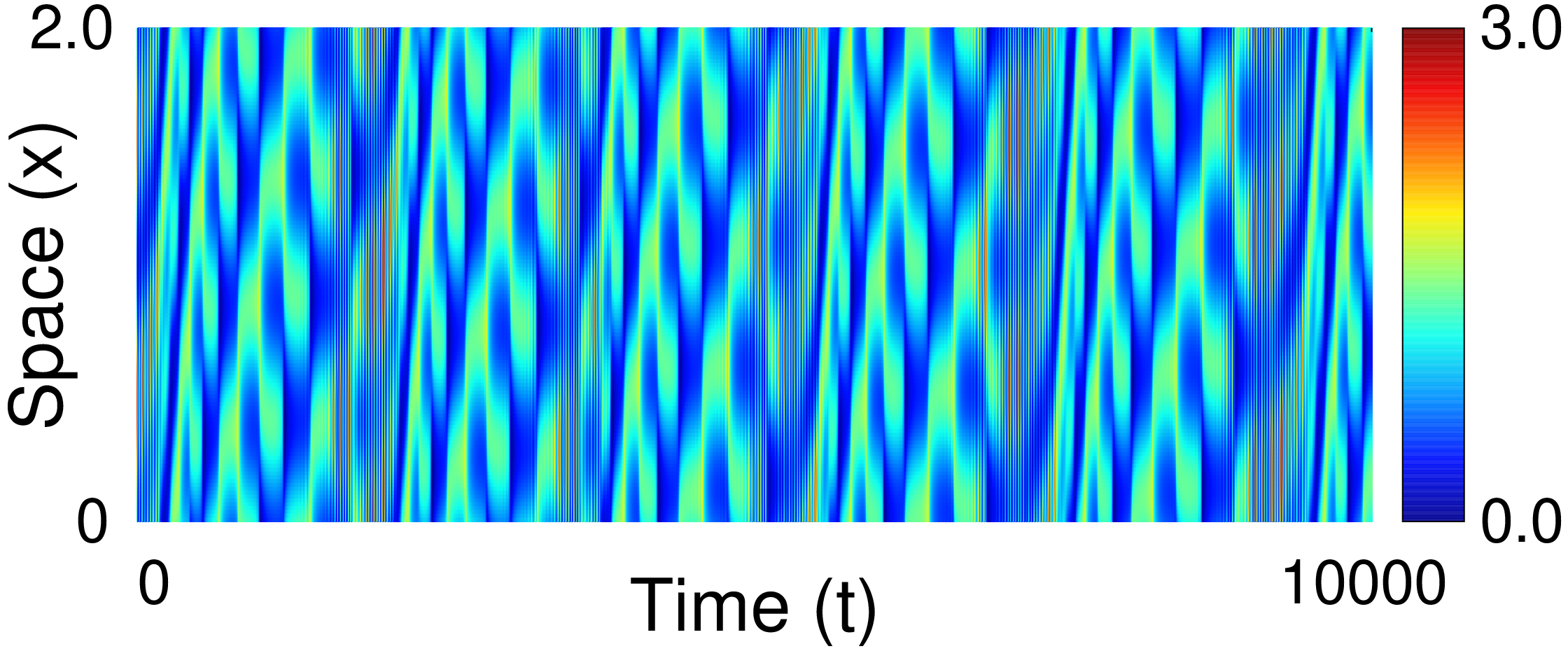}&
    \includegraphics[width=0.46\columnwidth]{./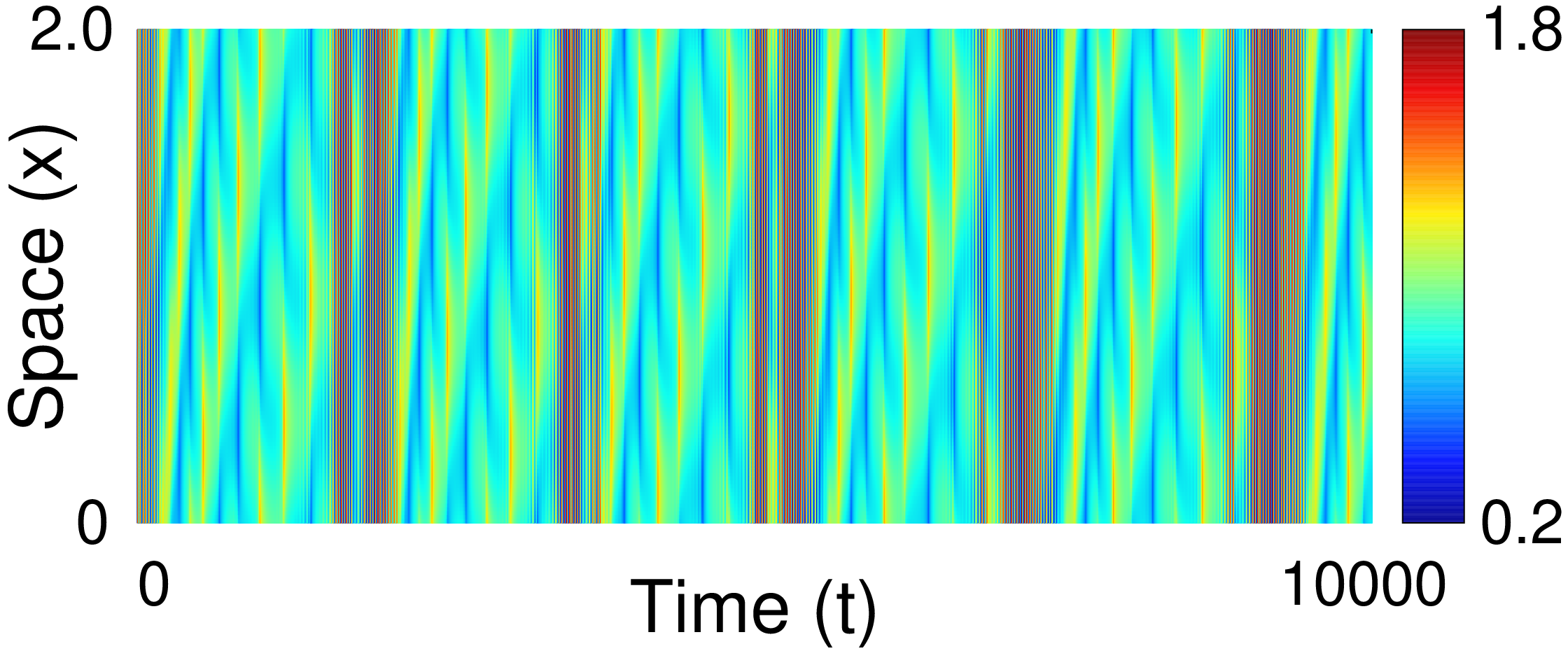}\\
    (a) & (b) \\
    \includegraphics[width=0.46\columnwidth]{./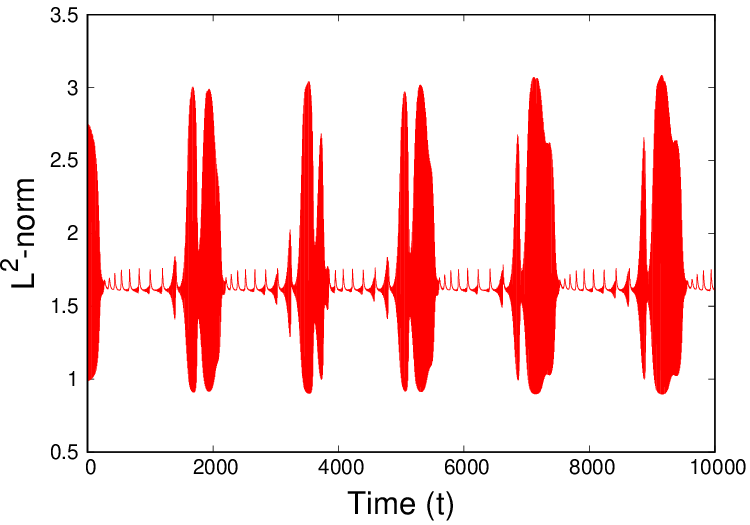} & \\
    (c) & 
    \end{tabular}
  \caption{The numerical result for \eqref{Schnak} under the periodic boundary conditions.
  (a) Numerical solution of $u(x, t)$.
  (b) Numerical solution of $v(x, t)$.
  (c) $L^2$-norm of $u(x, t)$.}\label{fig:tth-periodic}
\end{figure}
Extension of this study to the periodic boundary conditions is also interesting.


\backmatter

\bmhead{Acknowledgments}
H. I. is partially supported by JSPS KAKENHI Grant Number 21K03353. 
S. K. is partially supported by JSPS KAKENHI, Grant Numbers 20K22307.
  
\section*{Declarations}

\bmhead{Conflict of interest}
The authors have no conflict of interest. 

%


\begin{appendices}





\section{Coefficients list of (13)}\label{sec:apA}
\begin{align*}
  A_1 &= \dfrac{1}{2 \det T_0}(T_0^{22} f_{0,0}^{1,1} - T_0^{12} g_{0,0}^{1,1}),\\
  A_2 &= \dfrac{1}{\det T_0}(T_0^{22} f_{0,0}^{1,2} - T_0^{12} g_{0,0}^{1,2}),\\
  A_3 &= \dfrac{1}{2 \det T_0}(T_0^{22} f_{0,0}^{2,2} - T_0^{12} g_{0,0}^{2,2}),\\
  A_4 &= \dfrac{1}{\det T_0}(T_0^{22} f_{1,1}^{1,1} - T_0^{12} g_{1,1}^{1,1}),\\
  A_5 &= \dfrac{1}{\det T_0}(T_0^{22} f_{2,2}^{1,1} - T_0^{12} g_{2,2}^{1,1}),\\
  a_1 &= \dfrac{1}{6 \det T_0}(T_0^{22} f_{0,0,0}^{1,1,1} - T_0^{12} g_{0,0,0}^{1,1,1}),\\  
  a_2 &= \dfrac{1}{\det T_0}\left(T_0^{22} (2 f_{1,1}^{1,2} B_{1010}^1 + f_{0,1,1}^{1,1,1}) - T_0^{12} (2 g_{1,1}^{1,2} B_{1010}^1 + g_{0,1,1}^{1,1,1})\right),\\
  a_3 &= \dfrac{1}{\det T_0}\left(T_0^{22} (2 f_{2,2}^{1,2} B_{1001}^2 + f_{0,2,2}^{1,1,1}) - T_0^{12} (2 g_{2,2}^{1,2} B_{1001}^2 + g_{0,2,2}^{1,1,1})\right),\\
  a_4 &= \dfrac{1}{2 \det T_0}(T_0^{22} f_{0,0,0}^{1,1,2} - T_0^{12} g_{0,0,0}^{1,1,2}),\\
  a_5 &= \dfrac{1}{2 \det T_0}(T_0^{22} f_{0,0,0}^{1,2,2} - T_0^{12} g_{0,0,0}^{1,2,2}),\\
  a_6 &= \dfrac{1}{6 \det T_0}(T_0^{22} f_{0,0,0}^{2,2,2} - T_0^{12} g_{0,0,0}^{2,2,2}),\\  
  a_7 &= \dfrac{1}{\det T_0}\left(T_0^{22} (2 f_{1,1}^{1,2} B_{0110}^1 + f_{0,1,1}^{2,1,1}) - T_0^{12} (2 g_{1,1}^{1,2} B_{0110}^1 + g_{0,1,1}^{2,1,1})\right),\\
  a_8 &= \dfrac{1}{\det T_0}\left(T_0^{22} (2 f_{2,2}^{1,2} B_{0101}^2 + f_{0,2,2}^{2,1,1}) - T_0^{12} (2 g_{2,2}^{1,2} B_{0101}^2 + g_{0,2,2}^{2,1,1})\right),\\
  a_9 &= \dfrac{2}{\det T_0} \Bigg(T_0^{22} \left(f_{1,1}^{1,2} B_{0011}^1 + f_{2,2}^{1,2} B_{0020}^2 + \dfrac{f_{1,1,2}^{1,1,1}}{2}\right) \\
  &- T_0^{12} \left(g_{1,1}^{1,2} B_{0011}^1 + g_{2,2}^{1,2} B_{0020}^2 + \dfrac{g_{1,1,2}^{1,1,1}}{2}\right) \Bigg),\\
  B_1 &= \dfrac{1}{2 \det T_0}(-T_0^{21} f_{0,0}^{1,1} + T_0^{11} g_{0,0}^{1,1}),\\
  B_2 &= \dfrac{1}{\det T_0}(-T_0^{21} f_{0,0}^{1,2} + T_0^{11} g_{0,0}^{1,2}),\\
  B_3 &= \dfrac{1}{2 \det T_0}(-T_0^{21} f_{0,0}^{2,2} + T_0^{11} g_{0,0}^{2,2}),\\
  B_4 &= \dfrac{1}{\det T_0}(-T_0^{21} f_{1,1}^{1,1} + T_0^{11} g_{1,1}^{1,1}),\\
  B_5 &= \dfrac{1}{\det T_0}(-T_0^{21} f_{2,2}^{1,1} + T_0^{11} g_{2,2}^{1,1}),\\
  b_1 &= \dfrac{1}{6 \det T_0}(-T_0^{21} f_{0,0,0}^{1,1,1} + T_0^{11} g_{0,0,0}^{1,1,1}),\\  
  b_2 &= \dfrac{1}{\det T_0}\left(-T_0^{21} (2 f_{1,1}^{1,2} B_{1010}^1 + f_{0,1,1}^{1,1,1}) + T_0^{11} (2 g_{1,1}^{1,2} B_{1010}^1 + g_{0,1,1}^{1,1,1})\right),\\
  b_3 &= \dfrac{1}{\det T_0}\left(-T_0^{21} (2 f_{2,2}^{1,2} B_{1001}^2 + f_{0,2,2}^{1,1,1}) + T_0^{11} (2 g_{2,2}^{1,2} B_{1001}^2 + g_{0,2,2}^{1,1,1})\right),\\
  b_4 &= \dfrac{1}{2 \det T_0}(-T_0^{21} f_{0,0,0}^{1,1,2} + T_0^{11} g_{0,0,0}^{1,1,2}),\\
  b_5 &= \dfrac{1}{2 \det T_0}(-T_0^{21} f_{0,0,0}^{1,2,2} + T_0^{11} g_{0,0,0}^{1,2,2}),\\
  b_6 &= \dfrac{1}{6 \det T_0}(-T_0^{21} f_{0,0,0}^{2,2,2} + T_0^{11} g_{0,0,0}^{2,2,2}),\\  
  b_7 &= \dfrac{1}{\det T_0}\left(-T_0^{21} (2 f_{1,1}^{1,2} B_{0110}^1 + f_{0,1,1}^{2,1,1}) + T_0^{11} (2 g_{1,1}^{1,2} B_{0110}^1 + g_{0,1,1}^{2,1,1})\right),\\
  b_8 &= \dfrac{1}{\det T_0}\left(-T_0^{21} (2 f_{2,2}^{1,2} B_{0101}^2 + f_{0,2,2}^{2,1,1}) + T_0^{11} (2 g_{2,2}^{1,2} B_{0101}^2 + g_{0,2,2}^{2,1,1})\right),\\
  b_9 &= \dfrac{2}{\det T_0} \Bigg(-T_0^{21} \left(f_{1,1}^{1,2} B_{0011}^1 + f_{2,2}^{1,2} B_{0020}^2 + \dfrac{f_{1,1,2}^{1,1,1}}{2}\right) \\
  &+ T_0^{11} \left(g_{1,1}^{1,2} B_{0011}^1 + g_{2,2}^{1,2} B_{0020}^2 + \dfrac{g_{1,1,2}^{1,1,1}}{2}\right) \Bigg),\\
  C_1 &= \dfrac{1}{\det T_1} (T_1^{22} f_{0,1}^{1,1} - T_1^{12} g_{0,1}^{1,1} ),\\
  C_2 &= \dfrac{1}{\det T_1} (T_1^{22} f_{1,0}^{1,2} - T_1^{12} g_{1,0}^{1,2} ),\\
  C_3 &= \dfrac{1}{\det T_1} (T_1^{22} f_{1,2}^{1,1} - T_1^{12} g_{1,2}^{1,1} ),\\
  c_1 &= \dfrac{1}{\det T_1} \left(T_1^{22} \left(f_{0,1}^{1,2} B_{1010}^1 + \dfrac{f_{0,0,1}^{1,1,1}}{2}\right) - T_1^{12} \left(g_{0,1}^{1,2} B_{1010}^1 + \dfrac{g_{0,0,1}^{1,1,1}}{2}\right) \right),\\
  c_2 &= \dfrac{1}{\det T_1} \bigg(T_1^{22} (f_{0,1}^{1,2} B_{0110}^1 + f_{0,1}^{2,2} B_{1010}^1 + f_{0,0,1}^{1,2,1}) \\
  &- T_1^{12} (g_{0,1}^{1,2} B_{0110}^1 + g_{0,1}^{2,2} B_{1010}^1 + g_{0,0,1}^{1,2,1}) \bigg),\\
  c_3 &= \dfrac{1}{\det T_1} \left(T_1^{22} \left(f_{0,1}^{2,2} B_{0110}^1 + \dfrac{f_{0,0,1}^{2,2,1}}{2}\right) 
  - T_1^{12} \left(g_{0,1}^{2,2} B_{0110}^1 + \dfrac{g_{0,0,1}^{2,2,1}}{2}\right) \right),\\
  c_4 &= \dfrac{1}{\det T_1} \left(T_1^{22} \left(f_{2,1}^{2,1} B_{0020}^2 + \dfrac{f_{1,1,1}^{1,1,1}}{2}\right) - T_1^{12} \left(g_{2,1}^{2,1} B_{0020}^2 + \dfrac{g_{1,1,1}^{1,1,1}}{2}\right) \right),\\
  c_5 &= \dfrac{1}{\det T_1} \left(T_1^{22} (f_{2,1}^{1,2} B_{0011}^1 + \tilde f_{2,3} + f_{1,2,2}^{1,1,1}) - T_1^{12} (g_{2,1}^{1,2} B_{0011}^1 + \tilde g_{2,3} + g_{1,2,2}^{1,1,1}) \right),\\
  c_6 &= \dfrac{1}{\det T_1} \bigg(T_1^{22} (f_{0,1}^{1,2} B_{0011}^1 + f_{2,1}^{2,1} B_{1001}^2 + f_{2,1}^{1,2} B_{1010}^1 + f_{0,1,2}^{1,1,1})\\
  & - T_1^{12} (g_{0,1}^{1,2} B_{0011}^1 + g_{2,1}^{2,1} B_{1001}^2 + g_{2,1}^{1,2} B_{1010}^1 + g_{0,1,2}^{1,1,1}) \bigg),\\
  c_7 &= \dfrac{1}{\det T_1} \bigg(T_1^{22} (f_{0,1}^{2,2} B_{0011}^1 + f_{2,1}^{1,2} B_{0110}^1 + f_{2,1}^{2,1} B_{0101}^2 + f_{0,1,2}^{2,1,1})\\
  & - T_1^{12} (g_{0,1}^{2,2} B_{0011}^1 + g_{2,1}^{1,2} B_{0110}^1 + g_{2,1}^{2,1} B_{0101}^2 + g_{0,1,2}^{2,1,1}) \bigg),\\
  D_1 &= \dfrac{1}{2 \det T_2} (T_2^{22} f_{1,1}^{1,1} - T_2^{12} g_{1,1}^{1,1} ),\\
  D_2 &= \dfrac{1}{\det T_2} (T_2^{22} f_{0,2}^{1,1} - T_2^{12} g_{0,2}^{1,1} ),\\
  D_3 &= \dfrac{1}{\det T_2} (T_2^{22} f_{0,2}^{2,1} - T_2^{12} g_{0,2}^{2,1} ),\\
  d_1 &= \dfrac{1}{\det T_2} \left(T_2^{22} \left(f_{0,2}^{1,2} B_{1001}^2 + \dfrac{f_{0,0,2}^{1,1,1}}{2}\right) - T_2^{12} \left(g_{0,2}^{1,2} B_{1001}^2 + \dfrac{g_{0,0,2}^{1,1,1}}{2}\right) \right),\\
  d_2 &= \dfrac{1}{\det T_2} \left(T_2^{22} (f_{0,2}^{1,2} B_{0101}^2 + f_{0,2}^{2,2} B_{1001}^2 + f_{0,0,2}^{1,2,1} ) - T_2^{12} (g_{0,2}^{1,2} B_{0101}^2 + g_{0,2}^{2,2} B_{1001}^2 + g_{0,0,2}^{1,2,1} ) \right),\\
  d_3 &= \dfrac{1}{\det T_2} \left(T_2^{22} \left(f_{0,2}^{2,2} B_{0101}^2 + \dfrac{f_{0,0,2}^{2,2,1}}{2}\right) - T_2^{12} \left(g_{0,2}^{2,2} B_{0101}^2 + \dfrac{g_{0,0,2}^{2,2,1}}{2}\right) \right),\\
  d_4 &= \dfrac{1}{\det T_2} \left(T_2^{22} (f_{1,1}^{1,2} B_{0011}^1 + \tilde f_{1,3} + f_{2,1,1}^{1,1,1}) - T_2^{12} (g_{1,1}^{1,2} B_{0011}^1 + \tilde g_{1,3} + g_{2,1,1}^{1,1,1}) \right),\\
  d_5 &= \dfrac{1}{\det T_2} \left(T_2^{22} \left(\tilde f_{2,4} + \dfrac{f_{2,2,2}^{1,1,1}}{2}\right) - T_2^{12} \left(\tilde g_{2,4} + \dfrac{g_{2,2,2}^{1,1,1}}{2} \right) \right),\\
  d_6 &= \dfrac{1}{\det T_2} \Bigg(T_2^{22} \left(f_{1,1}^{1,2} B_{1010}^1 + f_{0,2}^{1,2} B_{0020}^2 + \dfrac{f_{1,1,0}^{1,1,1}}{2} \right) \\
  &- T_2^{12} \left(g_{1,1}^{1,2} B_{1010}^1 + g_{0,2}^{1,2} B_{0020}^2 + \dfrac{g_{1,1,0}^{1,1,1}}{2} \right) \Bigg),\\
  d_7 &= \dfrac{1}{\det T_2} \left(T_2^{22} \Bigg(f_{1,1}^{1,2} B_{0110}^1 + f_{0,2}^{2,2} B_{0020}^2 + \dfrac{f_{1,1,0}^{1,1,2}}{2} \right) \\
  &- T_2^{12} \left(g_{1,1}^{1,2} B_{0110}^1 + g_{0,2}^{2,2} B_{0020}^2 + \dfrac{g_{1,1,0}^{1,1,2}}{2} \right) \Bigg).
\end{align*}


\section{Coefficients list of (15)}\label{sec:apB}
\begin{align}
  E_1 &= \bar p_1 L_1 + \bar p_2 N_1, & E_2 &= \bar p_1 L_2 + \bar p_2 N_2, & E_3 &= \bar p_1 \bar L_1 + \bar p_2 \bar N_1,\\
  E_4 &= \bar p_1 A_4 + \bar p_2 B_4, & E_5 &= \bar p_1 A_5 + \bar p_2 B_5, & e_1 &= \bar p_1 L_3 + \bar p_2 N_3,\\
  e_2 &= \bar p_1 L_4 + \bar p_2 N_4, & e_3 &= \bar p_1 \bar L_4 + \bar p_2 \bar N_4, & e_4 &= \bar p_1 \bar L_3 + \bar p_2 \bar N_3,\\
  e_5 &= \bar p_1 L_5 + \bar p_2 N_5, & e_6 &= \bar p_1 \bar L_5 + \bar p_2 \bar N_5, & e_7 &= \bar p_1 L_6 + \bar p_2 N_6,\\
  e_8 &= \bar p_1 \bar L_6 + \bar p_2 \bar N_6, & e_9 &= \bar p_1 a_9 + \bar p_2 b_9, &   H_1 &= q_1 C_1 + q_2 C_2, \\
  H_2 &= q_1^2 c_1 + q_1 q_2 c_2 + q_2^2 c_3, & H_3 &= 2(|q_1|^2 c_1 + |q_2|^2 c_3), &  H_4 &= q_1 c_6 + q_2 c_7, \\
  I_1 &= q_1 D_2 + q_2 D_3, & I_2 &= q_1^2 d_1 + q_1 q_2 d_2 + q_2^2 d_3, & I_3 &= 2(|q_1|^2 d_1 + |q_2|^2 d_3),\\
  I_4 &= q_1 d_6 + q_2 d_7, & &
\end{align}
where
\begin{align*}
  L_1 &= A_1 q_1^2 + A_2 q_1 q_2 + A_3 q_2^2, & L_2 &= 2(A_1 |q_1|^2 + A_3 |q_2|^2), \\
  L_3 &= a_1 q_1^3 + (a_4 q_1 + a_5 q_2) q_1 q_2 + a_6 q_2^3, & L_4 &= 3 a_1 |q_1|^2 q_1 + (a_4 \bar q_1 + a_5 \bar q_2) q_1 q_2 + 3 a_6 |q_2|^2 q_2,\\
  L_5 &= a_2 q_1 + a_7 q_2, & L_6 &= a_3 q_1 + a_8 q_2,\\
  N_1 &= B_1 q_1^2 + B_2 q_1 q_2 + B_3 q_2^2, & N_2 &= 2(B_1 |q_1|^2 + B_3 |q_2|^2),\\
  N_3 &= b_1 q_1^3 + (b_4 q_1 + b_5 q_2) q_1 q_2 + b_6 q_2^3, & N_4 &= 3 b_1 |q_1|^2 q_1 + (b_4 \bar q_1 + b_5 \bar q_2) q_1 q_2 + 3 b_6 |q_2|^2 q_2,\\
  N_5 &= b_2 q_1 + b_7 q_2, & N_6 &= b_3 q_1 + b_8 q_2.
\end{align*}


\section{Proof of Theorem~\ref{thm:1}}\label{sec:apC}
\begin{lemma}\label{lem:nit}
  The system
  \begin{equation}
    \begin{cases}
      \dot z = \lambda_0 z + \displaystyle\sum_{0 \le p, q, r, s \le 2 \atop p + q + r + s = 2} \tilde F^0_{p q r s} z^{p} \bar z^{q} \alpha_1^{r} \alpha_2^{s} + \mathcal{O}_3,\\
      \dot \alpha_1 = \mu_1^+ \alpha_1 + \displaystyle\sum_{0 \le p, q, r, s \le 2 \atop p + q + r + s = 2} \tilde F^1_{p q r s} z^{p} \bar z^{q} \alpha_1^{r} \alpha_2^{s} + \mathcal{O}_3,\\
      \dot \alpha_2 = \mu_2^+ \alpha_2 + \displaystyle\sum_{0 \le p, q, r, s \le 2 \atop p + q + r + s = 2} \tilde F^2_{p q r s} z^{p} \bar z^{q} \alpha_1^{r} \alpha_2^{s} + \mathcal{O}_3
    \end{cases}
  \end{equation}
  can be transformed by an invertible parameter-dependent change of complex coordinate
  \begin{equation}
    \begin{cases}
      z = \zeta + \displaystyle\sum_{0\le p, q, r, s \le 2 \atop p + q + r + s = 2} \Gamma_{p q r s} \zeta^{p} \bar\zeta^{q} x^{r} y^{s},\\
      \alpha_1 = x + \displaystyle\sum_{0\le p, q, r, s \le 2 \atop p + q + r + s = 2} \Theta_{p q r s} \zeta^{p} \bar\zeta^{q} x^{r} y^{s},\\
      \alpha_2 = y + \displaystyle\sum_{0\le p, q, r, s \le 2 \atop p + q + r + s = 2} \Lambda_{p q r s} \zeta^{p} \bar\zeta^{q} x^{r} y^{s}
    \end{cases}
  \end{equation}
  into the following dynamical system by setting $\Gamma_{j_1 j_2 k_1 k_2 l_1 l_2}$, $\Theta_{j_1 j_2 k_1 k_2 l_1 l_2}$ and $\Lambda_{j_1 j_2 k_1 k_2 l_1 l_2}$, appropriately:
  \begin{equation}
    \begin{cases}
      \dot \zeta = \lambda_0 \zeta + \tilde F^{0}_{1010} \zeta x + \tilde F^{0}_{1001} \zeta y + \mathcal{O}_3,\\
      \dot x = \mu_1^+ x + \tilde F^{1}_{1100}|\zeta|^{2} + \tilde F^{1}_{0020} x^{2} + \tilde F^{1}_{0011} xy + \tilde F^{1}_{0002} y^{2} + \mathcal{O}_3,\\
      \dot y = \mu_2^+ y + \tilde F^{2}_{1100}|\zeta|^{2} + \tilde F^{2}_{0020} x^{2} + \tilde F^{2}_{0011} xy + \tilde F^{2}_{0002} y^{2} + \mathcal{O}_3.
    \end{cases}
  \end{equation}
\end{lemma}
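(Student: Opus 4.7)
The plan is to apply the standard Poincaré normal form procedure at quadratic order. Since the original and target equations already coincide modulo $\mathcal{O}_3$, it suffices to choose the coefficients $\Gamma_{pqrs}$, $\Theta_{pqrs}$, $\Lambda_{pqrs}$ of the near-identity transformation so that every non-resonant quadratic monomial is eliminated, while the resonant monomials survive with their original coefficients.

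First I would substitute the transformation into the original system and differentiate, using $\dot\zeta = \lambda_0 \zeta + \mathcal{O}_2$, $\dot x = \mu_1^+ x + \mathcal{O}_2$, $\dot y = \mu_2^+ y + \mathcal{O}_2$ to evaluate $\dot z, \dot \alpha_1, \dot\alpha_2$ up to quadratic order. For each quadruple $(p,q,r,s)$ with $p+q+r+s = 2$, matching the coefficient of $\zeta^p \bar\zeta^q x^r y^s$ on both sides produces a homological equation of the schematic form
\[
\bigl(p \lambda_0 + q \bar\lambda_0 + r \mu_1^+ + s \mu_2^+ - \lambda_*\bigr)\, \Xi_{pqrs} = \tilde F^i_{pqrs} - (\text{coefficient retained}),
\]
where $\Xi \in \{\Gamma, \Theta, \Lambda\}$ and $\lambda_*$ denotes $\lambda_0$, $\mu_1^+$, $\mu_2^+$ for $i = 0, 1, 2$, respectively.

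At the Hopf--Turing--Turing bifurcation point $\lambda_0 = \I \omega_0$ and $\mu_1^+ = \mu_2^+ = 0$, so the prefactor reduces to $(p-q-\delta)\I \omega_0$, with $\delta = 1$ for the $\zeta$-equation and $\delta = 0$ for the $x$- and $y$-equations. Resonance therefore occurs precisely when $p-q = 1$ in the $\zeta$-equation and when $p = q$ in the $x$- and $y$-equations. Enumerating under the constraint $p+q+r+s = 2$, the only resonant monomials are $\zeta x$ and $\zeta y$ in the $\zeta$-equation, and $|\zeta|^2$, $x^2$, $xy$, $y^2$ in each of the $x$- and $y$-equations, matching exactly the terms surviving in the target form. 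For every non-resonant monomial the prefactor is nonzero, so the homological equation is uniquely solvable for $\Xi_{pqrs}$, thereby eliminating that coefficient from the transformed system; for a resonant monomial we set $\Xi_{pqrs} = 0$ and the original coefficient $\tilde F^i_{pqrs}$ passes through untouched.

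The only real obstacle is bookkeeping: one must verify that the quadratic contributions arising from $\dot\Phi$ (where $\Phi$ denotes the quadratic part of the transformation) and from substituting the transformation into the linear part of the right-hand side together produce exactly the prefactor above, with no extra couplings across distinct monomials. This decoupling is a direct consequence of the diagonal form of the linear part at the critical point. Contributions from $\mathcal{O}_3$ remainders and from iterating the quadratic transformation into itself are of degree three or higher and are absorbed into the final $\mathcal{O}_3$ term, so the construction is complete and the transformation is analytic in the bifurcation parameters near the critical point.
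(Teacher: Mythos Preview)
Your proposal is correct and follows essentially the same approach as the paper: both compute the homological equation at quadratic order, identify the resonant monomials via the vanishing of the prefactor $(p-1)\lambda_0 + q\bar\lambda_0 + r\mu_1^+ + s\mu_2^+$ (respectively with $\mu_1^+$ or $\mu_2^+$ in place of $\lambda_0$), and solve for the transformation coefficients on the non-resonant monomials. The only cosmetic difference is that the paper writes the inverse near-identity transformation and differentiates that, whereas you describe substituting the forward transformation; at this order the two computations are equivalent and yield the identical resonance list.
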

\begin{proof}
  The inverse change of variables are given by the expressions
  \begin{equation}
    \begin{cases}
      \zeta = z - \displaystyle\sum_{0\le p, q, r, s \le 2 \atop p + q + r + s = 2} \Gamma_{p q r s}z^{p} \bar z^{q} \alpha_1^{r} \alpha_2^{s} + \mathcal{O}_3,\\
      x = \alpha_1 - \displaystyle\sum_{0\le p, q, r, s \le 2 \atop p + q + r + s = 2} \Theta_{p q r s}z^{p} \bar z^{q} \alpha_1^{r} \alpha_2^{s} + \mathcal{O}_3,\\
      y = \alpha_2 - \displaystyle\sum_{0\le p, q, r, s \le 2 \atop p + q + r + s = 2} \Lambda_{p q r s}z^{p} \bar z^{q} \alpha_1^{r} \alpha_2^{s} + \mathcal{O}_3.
    \end{cases}
  \end{equation}
  Differentiating the above on $t$, we have
  \begin{align}
    \dot \zeta &= \lambda_0 \zeta + \displaystyle\sum_{0 \le p, q, r, s \le 2 \atop p + q + r + s = 2}\Big\{\tilde F^0_{p q r s} - \Gamma_{p q r s}\big(\lambda(p - 1) + \bar\lambda q + \mu_1 r  + \mu_2 s \big)\Big\}\zeta^{p} \bar \zeta^{q} x^{r} y^{s} + \mathcal{O}_{3},\\
    \dot x &= \mu_{1}^{+} x + \displaystyle\sum_{0 \le p, q, r, s \le 2 \atop p + q + r + s = 2}\Big\{\tilde F^1_{p q r s} - \Theta_{p q r s} \big(\lambda p + \bar\lambda q + \mu_1 r + \mu_2 s \big)\Big\}\zeta^{p} \bar \zeta^{q} x^{r}  y^{s}  + \mathcal{O}_{3},\\
    \dot y &= \mu_{2}^{+} y + \displaystyle\sum_{0\le p, q, r, s \le 2 \atop p + q + r + s = 2}\Big\{\tilde F^2_{p q r s} - \Lambda_{p q r s}\big(\lambda p + \bar\lambda q + \mu_1 r + \mu_2 s \big)\Big\} \zeta^{p} \bar \zeta^{q} x^{r} y^{s} + \mathcal{O}_{3}.
  \end{align}
  This implies that if $(p, q) = (1, 0)$, then for all $(r, s)$ $\tilde F^{0}_{10rs}$ cannot be erased.
  Also if $(p, q) = (1, 1)$ or $(0, 0)$, then for all $(r, s)$ $\tilde F^{n}_{11rs}$ and $\tilde F^{n}_{00rs}$ $(n = 1, 2)$ cannot be erased.
The other terms can be erased by setting $\Gamma_{p q r s}$, $\Theta_{p q r s}$ and $\Lambda_{p q r s}$, appropriately.
\end{proof}

\begin{corollary}\label{cor:transform-tth}
  Consider our case \eqref{eq:ourcase}.
  By using the invertible parameter-dependent change of complex coordinate:
  \begin{equation}\label{eq:trans_quad}
    \begin{cases}
      z = \zeta + \Gamma_{2000} \zeta^{2} + \Gamma_{1100} |\zeta|^{2} + \Gamma_{0200} \bar\zeta^{2} + \Gamma_{0020} x^{2} + \Gamma_{0002} y^{2},\\
      \alpha_{1} = x + \Theta_{1010} \zeta x + \Theta_{0110} \bar\zeta x,\\
      \alpha_{2} = y + \Lambda_{1001} \zeta y + \Lambda_{0101} \bar\zeta y,
    \end{cases}
  \end{equation}
  where
  \begin{align*}
    \Gamma_{2000} &= \dfrac{E_{1}}{\lambda_0}, & \Gamma_{1100} &= \dfrac{E_{2}}{\bar\lambda_0}, & \Gamma_{0200} &= \dfrac{E_{3}}{2\bar\lambda_0 - \lambda_0},\\
    \Gamma_{0020} &= \dfrac{E_{4}}{2\mu_{1}^+ - \lambda_0}, & \Gamma_{0002} &= \dfrac{E_{5}}{2\mu_{2}^+ - \lambda_0}, & \Theta_{1010} &= \dfrac{H_{1}}{\lambda_0},\\
    \Theta_{0110} &= \Theta_{1010}, & \Lambda_{1001} &= \dfrac{I_{1}}{\lambda_0}, & \Lambda_{0101} &= \Lambda_{1001},
  \end{align*}
  the dynamical system \eqref{eq:ourcase} is transformed into
  \begin{equation}\label{eq:ourcase2}
    \begin{cases}
      \dot \zeta = \lambda_0 \zeta + P^0(\zeta, \bar \zeta, x, \bar x, y, \bar y) + \mathcal{O}_4,\\
      \dot x = \mu^{+}_{1} x + C_{3} x y + P^1(\zeta, \bar \zeta, x, \bar x, y, \bar y) + \mathcal{O}_4,\\
      \dot y = \mu^{+}_{2} y + D_{1} x^{2} + P^2(\zeta, \bar \zeta, x, \bar x, y, \bar y) + \mathcal{O}_4,\\
    \end{cases}
  \end{equation}
  where $P^j(\zeta, \bar \zeta, x, \bar x, y, \bar y)$ $(j = 1,2,3)$ have only cubic terms satisfying
  \begin{align*}
      P^0 (\zeta, \bar \zeta, e^{\mathrm{i} \eta} x, e^{2 \mathrm{i} \eta} y ) &= P^0(\zeta, \bar \zeta, x, y),\\
      P^1 (\zeta, \bar \zeta, e^{\mathrm{i} \eta} x, e^{2 \mathrm{i} \eta} y ) &= e^{\mathrm{i} \eta} P^1 (\zeta, \bar \zeta, x, y),\\
      P^2 (\zeta, \bar \zeta, e^{\mathrm{i} \eta} x, e^{2 \mathrm{i} \eta} y) &= e^{2 \mathrm{i} \eta} P^2 (\zeta, \bar \zeta, x, y).      
  \end{align*}
  \begin{remark}
    The near identity transformation \eqref{eq:trans_quad} preserves the invariance under the mapping $\alpha_j \mapsto e^{\mathrm{i} j \eta} \alpha_j$, that is, \eqref{eq:ourcase2} is also invariant under the mapping $x \mapsto e^{\mathrm{i} \eta} x$ and $y \mapsto e^{2 \mathrm{i} \eta} y$.
  \end{remark}
\end{corollary}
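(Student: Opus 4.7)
The plan is to read this corollary as a direct specialization of Lemma~\ref{lem:nit} to the quadratic structure of \eqref{eq:ourcase}. At the triply degenerate parameter set $(\alpha^*, D_u^*, D_v^*)$ the eigenvalues are $\lambda_0 = \I\omega_0$ and $\mu_1^+ = \mu_2^+ = 0$, so the resonance condition of Lemma~\ref{lem:nit} for a quadratic monomial $z^p\bar z^q\alpha_1^r\alpha_2^s$ reads $\lambda_0(p-1) + \bar\lambda_0 q = 0$ in $\dot z$ and $\lambda_0 p + \bar\lambda_0 q = 0$ in $\dot\alpha_1, \dot\alpha_2$. Scanning the quadratic terms of \eqref{eq:ourcase}, the divisors $\lambda_0, \bar\lambda_0, 2\bar\lambda_0 - \lambda_0 = -3\I\omega_0, -\lambda_0$ of the five quadratics in $\dot z$ are all nonzero; the divisor of $C_3\alpha_1\alpha_2$ in $\dot\alpha_1$ is $\mu_2^+ = 0$ (resonant); and the divisor of $D_1\alpha_1^2$ in $\dot\alpha_2$ is $2\mu_1^+ - \mu_2^+ = 0$ (resonant). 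Hence those two are the only resonant quadratics, and all others are eliminable.

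Second, I would solve the homological equations for the non-resonant terms. Lemma~\ref{lem:nit} forces each transformation coefficient to be the corresponding coefficient of \eqref{eq:ourcase} divided by its nonzero eigenvalue combination, reproducing $\Gamma_{2000}=E_1/\lambda_0$, $\Gamma_{1100}=E_2/\bar\lambda_0$, $\Gamma_{0200}=E_3/(2\bar\lambda_0-\lambda_0)$, $\Gamma_{0020}=E_4/(2\mu_1^+-\lambda_0)$, $\Gamma_{0002}=E_5/(2\mu_2^+-\lambda_0)$, $\Theta_{1010}=H_1/\lambda_0$, $\Theta_{0110}=\overline{\Theta_{1010}}$, $\Lambda_{1001}=I_1/\lambda_0$, $\Lambda_{0101}=\overline{\Lambda_{1001}}$ as in the statement (the conjugation rules on $\Theta_{0110}, \Lambda_{0101}$ being forced by the reality of $\alpha_1, \alpha_2$). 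Substituting \eqref{eq:trans_quad} into \eqref{eq:ourcase} and collecting terms up to quadratic order then yields \eqref{eq:ourcase2}, with only $C_3 xy$ in $\dot x$ and $D_1 x^2$ in $\dot y$ surviving among the quadratics.

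Third, I would verify the equivariance of $P^0, P^1, P^2$. The system \eqref{eq:infinite} is $O(2)$-equivariant under \eqref{o2}, and projecting onto the center eigenspace and diagonalizing the linear part descends the translation action to $\zeta \mapsto \zeta$, $x \mapsto e^{\I\eta}x$, $y \mapsto e^{2\I\eta}y$. Each monomial in the near-identity transformation \eqref{eq:trans_quad} carries the correct weight for its coordinate: weight $0$ for $\dot\zeta$ (on the complexification $\alpha_{-m} = \bar\alpha_m$, the real monomial $x^2$ reads as $x\bar x$ of weight $0$, and similarly for $y^2$), weight $1$ for $\dot x$ (e.g.\ $\zeta x, \bar\zeta x$), weight $2$ for $\dot y$. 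The transformation therefore preserves equivariance; since the residual quadratics $C_3 xy$ and $D_1 x^2$ also have the correct weights, the cubic remainders $P^0, P^1, P^2$ must transform according to the three rules stated in the corollary.

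The main obstacle is the equivariance bookkeeping rather than the resonance analysis or the homological computation, both of which are essentially mechanical once the resonance pattern is fixed. The subtlety is that \eqref{eq:ourcase} mixes a complex coordinate $\zeta$ with real coordinates $\alpha_1, \alpha_2$, so the formal $S^1$-action inherited from the $O(2)$-symmetry of the double-cover problem does not act literally on $\mathbb{R}$. Fixing the convention that a real monomial $x^r$ corresponds to $x^{r/2}\bar x^{r/2}$ on the complexified system reconciles the weight counting with the appearance of terms such as $E_4\alpha_1^2$ in the $\dot\zeta$ equation, after which the stated equivariance of $P^j$ follows by inspection.
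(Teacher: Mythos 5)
Your proposal is correct and follows essentially the same route as the paper: the corollary is a direct specialization of Lemma~\ref{lem:nit} to \eqref{eq:ourcase}, checking that only $C_{3}\alpha_1\alpha_2$ and $D_{1}\alpha_1^{2}$ have vanishing divisors at the degenerate point, solving the homological equations for the remaining quadratics, and invoking the $\mathrm{O}(2)$-equivariance inherited from \eqref{o2} for the form of the $P^j$. One remark: your $\Theta_{0110}=\overline{\Theta_{1010}}$ and $\Lambda_{0101}=\overline{\Lambda_{1001}}$ are what the homological equation (divisor $\bar\lambda_0$ for the $(p,q)=(0,1)$ terms) and the reality of $\alpha_1,\alpha_2$ actually force, so the unconjugated identities printed in the corollary appear to be typographical rather than a defect in your argument.
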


Next, we remove the cubic terms in \eqref{eq:ourcase2} as possible.
Since the procedure is similar to that of Lemma~\ref{lem:nit}, we use the following result without proof.
\begin{lemma}\label{lem:nearidentity-third}
  Consider the system
  \begin{equation}\label{eq:ourcase2-2}
    \begin{cases}
      \dot z = \lambda_0 z + Z_{3000} z^3 + Z_{2100}|z|^{2}z + Z_{1200} |z|^2 \bar z + Z_{0300} \bar z^3 + Z_{1020} z \alpha_1^{2}\\
      \quad  + Z_{0120} \bar z \alpha_1^2  + Z_{1002} z \alpha_2^{2}  + Z_{0102} \bar z \alpha_2^2 + Z_{0021} \alpha_1^2 \alpha_2 + \mathcal{O}_4,\\
      \dot \alpha_1 = \mu^{+}_{1} \alpha_1 + C_{3} \alpha_1 \alpha_2 + X_{2010} z^2 \alpha_1 + X_{1110} |z|^{2} \alpha_1 + X_{0210} \bar z^2 \alpha_1 \\
      \quad + X_{1011} z \alpha_1 \alpha_2 + X_{0111} \bar z \alpha_1 \alpha_2 + X_{0030} \alpha_1^{3} + X_{0012} \alpha_1 \alpha_2^{2} + \mathcal{O}_4,\\
      \dot \alpha_2 = \mu^{+}_{2} \alpha_2 + D_{1} \alpha_1^{2} + Y_{1020} z \alpha_1^2 + Y_{0120} \bar z \alpha_1^2 + Y_{2001} z^2 \alpha_2\\
      \quad + Y_{1101} |z|^{2} \alpha_2 + Y_{0201} \bar z^2 \alpha_2 + Y_{0021} \alpha_1^2 \alpha_2 + Y_{0003} \alpha_2^{3} +  \mathcal{O}_4,
    \end{cases}
  \end{equation}
  whose cubic terms consist of all terms which are invariant under $\alpha_j \mapsto e^{\mathrm{i} j \eta} \alpha_j$.
  The system \eqref{eq:ourcase2-2} can be transformed by an invertible parameter-dependent change of complex coordinate
  \begin{equation}
    \begin{cases}
      z = \zeta + \displaystyle\sum_{0 \le p, q, r, s \le 3 \atop p + q + r + s = 3} \Xi_{p q r s} \zeta^{p} \bar\zeta^{q} x^{r} y^{s},\\
      \alpha_1 = x + \displaystyle\sum_{0 \le p, q, r, s \le 3 \atop p + q + r + s = 3} \Pi_{p q r s} \zeta^{p} \bar\zeta^{q} x^{r} y^{s},\\
      \alpha_2 = y + \displaystyle\sum_{0 \le p, q, r, s \le 3 \atop p + q + r + s = 3} \Upsilon_{p q r s} \zeta^{p} \bar\zeta^{q} x^{r} y^{s},\\
    \end{cases}
  \end{equation}  
  into a dynamical system as follows:
  \begin{equation}\label{eq:tth}
    \begin{cases}
      \dot \zeta = \lambda_0 \zeta + Z_{2100} |\zeta|^{2} \zeta + Z_{1020} \zeta x^{2} + Z_{1002} \zeta y^{2} + \mathcal{O}_{4},\\
      \dot x = \mu_1^+ x + C_{3} x y + X_{1110} |\zeta|^{2} x + X_{0030} x^3 + X_{0012} x y^2 + \mathcal{O}_4,\\
      \dot y = \mu_2^+ y + D_{1} x^2 + Y_{1101} |\zeta|^{2} y + Y_{0021} x^{2} y + Y_{0003} y^{3} + \mathcal{O}_{4}.
    \end{cases}
  \end{equation}
  Here,
  \begin{align*}
    \Xi_{3000} &= \dfrac{Z_{3000}}{2\lambda_0}, & \Xi_{1200} &= \dfrac{Z_{1200}}{2\bar\lambda_0}, & \Xi_{0300} &= \dfrac{Z_{0300}}{3\bar\lambda_0 - \lambda_0},\\
    \Xi_{0120} &= \dfrac{Z_{0120}}{\bar\lambda_0 - \lambda_0 + 2\mu_{1}^{+}}, & \Xi_{0102} &= \dfrac{Z_{0102}}{\bar\lambda_0 - \lambda_0 + 2 \mu_{2}^{+}}, & \Xi_{0021} &= \dfrac{Z_{0021}}{2\mu_{1}^{+} + \mu_{2}^{+} - \lambda_0},\\
    \Pi_{2010} &= \dfrac{X_{2010}}{2\lambda_0}, & \Pi_{0210} &= \dfrac{X_{0210}}{2\bar\lambda_0}, & \Pi_{1011} &= \dfrac{X_{1011}}{\lambda_0 + \mu_{2}^{+}}, \\
    \Pi_{0111} &= \dfrac{X_{0111}}{\bar\lambda_0 + \mu_{2}^{+}}, & \Upsilon_{1020} &= \dfrac{Y_{1020}}{\mu_{2}^{+} - 2\mu_{1}^{+} - \lambda_0}, & \Upsilon_{0120} &= \dfrac{Y_{0120}}{\mu_{2}^{+} - 2\mu_{1}^{+} - \bar\lambda_0},\\
    \Upsilon_{2001} &= \dfrac{Y_{2001}}{2\lambda_0}, & \Upsilon_{0201} &= \dfrac{Y_{0201}}{2\bar\lambda_0}
  \end{align*}
  and the other terms $C_{p q r s}$, $A_{p q r s}$ and $B_{p q r s}$ are $0$.
\end{lemma}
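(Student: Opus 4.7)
The strategy is to verify that the specific polynomial change of variables given in the statement realises the standard normal-form reduction of the cubic part of \eqref{eq:ourcase2-2}. I would differentiate each of the three relations
\begin{equation*}
z = \zeta + \sum \Xi_{pqrs}\zeta^{p}\bar\zeta^{q}x^{r}y^{s}, \ \alpha_1 = x + \sum \Pi_{pqrs}\zeta^{p}\bar\zeta^{q}x^{r}y^{s}, \ \alpha_2 = y + \sum \Upsilon_{pqrs}\zeta^{p}\bar\zeta^{q}x^{r}y^{s}
\end{equation*}
(the sums running over $p + q + r + s = 3$) with respect to $t$, substitute the expressions for $\dot z, \dot\alpha_1, \dot\alpha_2$ from \eqref{eq:ourcase2-2}, and treat $\dot\zeta, \dot x, \dot y$ as unknown. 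Collecting cubic monomials $\zeta^{p}\bar\zeta^{q}x^{r}y^{s}$ produces, for each such monomial, a homological equation of the form
\begin{equation*}
\bigl[\lambda_0(p - \delta_0) + \bar\lambda_0\, q + \mu_1^+(r - \delta_1) + \mu_2^+(s - \delta_2)\bigr]\, K_{pqrs} = Z_{pqrs} - \widetilde Z_{pqrs},
\end{equation*}
where $K_{pqrs}$ is the relevant transformation coefficient ($\Xi, \Pi$ or $\Upsilon$), $\widetilde Z_{pqrs}$ is the coefficient that ends up multiplying the same monomial in the transformed system, and $(\delta_0, \delta_1, \delta_2)$ equals $(1,0,0), (0,1,0)$ or $(0,0,1)$ according to the component being treated. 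The quadratic reaction terms $C_3 x y$ and $D_1 x^2$ multiplied by the cubic near-identity corrections only produce quartic contributions, which are absorbed into $\mathcal{O}_4$.

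I would then evaluate the bracket at the Hopf--Turing--Turing point $(\alpha^*, D_u^*, D_v^*)$, where $\lambda_0 = \I\omega_0$ and $\mu_1^+ = \mu_2^+ = 0$, so the bracket reduces to $\I\omega_0(p - q - \delta_0)$. It vanishes exactly on the resonant multi-indices: $p - 1 = q$ for the $\dot\zeta$ equation and $p = q$ for the $\dot x$ and $\dot y$ equations. Intersecting this with the multi-indices allowed by the $S^{1}$-equivariance $\alpha_j \mapsto \e^{\I j\eta}\alpha_j$ already built into \eqref{eq:ourcase2-2}, the surviving resonant cubic monomials are precisely $|\zeta|^{2}\zeta, \zeta x^{2}, \zeta y^{2}$ in $\dot\zeta$; $|\zeta|^{2}x, x^{3}, x y^{2}$ in $\dot x$; and $|\zeta|^{2}y, x^{2}y, y^{3}$ in $\dot y$ — matching the list kept in \eqref{eq:tth}. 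For every remaining monomial the divisor is nonzero at the bifurcation point — the explicit values are $2\lambda_0$, $2\bar\lambda_0$, $3\bar\lambda_0 - \lambda_0$, $\bar\lambda_0 - \lambda_0 + 2\mu_j^+$, $2\mu_1^+ + \mu_2^+ - \lambda_0$, $\lambda_0 + \mu_2^+$, $\bar\lambda_0 + \mu_2^+$ and $\mu_2^+ - 2\mu_1^+ - \lambda_0$, each a nonzero multiple of $\I\omega_0$ there — so the homological equation has the unique solution $K_{pqrs} = Z_{pqrs}/[\,\text{divisor}\,]$ with $\widetilde Z_{pqrs} = 0$. These ratios are precisely the formulas stated in the lemma, and setting all other $\Xi, \Pi, \Upsilon$ coefficients equal to zero then reproduces \eqref{eq:tth}. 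Continuity of the divisors in $(\alpha, D_u, D_v)$ guarantees that the transformation is well defined and invertible on a sufficiently small neighbourhood of $(\alpha^*, D_u^*, D_v^*)$ by the implicit function theorem.

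The main obstacle is purely combinatorial bookkeeping: one must verify carefully that the products of the cubic near-identity corrections with the quadratic reaction terms $C_3 x y$ and $D_1 x^2$ really contribute only $\mathcal{O}_4$ remainders, and that the $S^{1}$ equivariance is preserved by the transformation, so that no non-invariant cubic monomials can be inadvertently introduced and the reduced system has the precise shape \eqref{eq:tth}. Once these routine checks are carried out, the explicit expressions for $\Xi_{pqrs}, \Pi_{pqrs}, \Upsilon_{pqrs}$ listed in the lemma are a direct read-off from inverting the bracket in each homological equation.
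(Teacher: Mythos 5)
Your proposal is correct and follows essentially the same route as the paper: the paper states this lemma without proof, referring to the procedure of Lemma~\ref{lem:nit}, which is precisely the homological-equation computation you carry out — solve $K_{pqrs}=Z_{pqrs}/[\lambda_0(p-\delta_0)+\bar\lambda_0 q+\mu_1^+(r-\delta_1)+\mu_2^+(s-\delta_2)]$ for the non-resonant cubic monomials and keep the resonant ones, which at the Hopf--Turing--Turing point are exactly those with $p-q=\delta_0$ among the $S^1$-equivariant terms. Your identification of the surviving monomials and of the nonvanishing divisors matches the coefficient list in the lemma, and your observation that the quadratic terms $C_3xy$ and $D_1x^2$ only contribute at order four is the correct justification for why the cubic homological equation involves only the linear part.
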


We now combine Corollary~\ref{cor:transform-tth} and Lemma~\ref{lem:nearidentity-third}:
\begin{corollary}
  The system \eqref{eq:ourcase} can be transformed by an invertible parameter-dependent change of complex coordinate, smoothly depending on the parameters,
  \begin{equation}\label{eq:trans_cubic}
    \begin{cases}
      z = \zeta + \Gamma_{2000} \zeta^{2} + \Gamma_{1100}|\zeta|^{2} + \Gamma_{0200} \bar\zeta^{2} + \Gamma_{0020} x^{2} + \Gamma_{0002} y^{2}\\
      \quad + \Xi_{3000} \zeta^3 + \Xi_{1200} |\zeta| \bar \zeta + \Xi_{0300} \bar \zeta^3 + \Xi_{0120} \bar \zeta x^2 + \Xi_{0102} \bar\zeta y^2 + \Xi_{0021} x^2 y,\\
      \alpha_{1} = x + \Theta_{1010} \zeta x + \Theta_{0110} \bar\zeta x + \Pi_{2010} \zeta^2 x + \Pi_{0210} \bar\zeta^2 x\\
      \quad + \Pi_{1011} \zeta x y + \Pi_{0111} \bar \zeta x y,\\
      \alpha_{2} = y + \Lambda_{1001} \zeta y + \Lambda_{0101} \bar\zeta y + \Upsilon_{1020} \zeta x^2 + \Upsilon_{0120} \bar \zeta x^2 + \Upsilon_{2001} \zeta^2 y\\
      \quad + \Upsilon_{0201} \bar \zeta^2 y, 
    \end{cases}
  \end{equation}
  for all sufficiently small $\mu_0,\mu_1^{+}, \mu_2^{+}$, into \eqref{eq:tth}.  
\end{corollary}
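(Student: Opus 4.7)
The plan is to establish the final Corollary by composing the two near-identity reductions that have just been prepared. Both are local diffeomorphisms near the origin, smoothly depending on $(\mu_0,\mu_1^+,\mu_2^+)$, so their composition is itself a smooth invertible parameter-dependent change of coordinates in a neighborhood of the origin, valid for all sufficiently small bifurcation parameters.

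First, I would apply the purely quadratic substitution \eqref{eq:trans_quad} supplied by Corollary~\ref{cor:transform-tth} to the system \eqref{eq:ourcase}. This produces the intermediate form \eqref{eq:ourcase2}, whose quadratic part has been reduced to the resonant monomials $C_3 xy$ and $D_1 x^2$ and whose cubic part $P^0,P^1,P^2$ is invariant under the action $(x,y)\mapsto(e^{\mathrm{i}\eta}x,e^{2\mathrm{i}\eta}y)$ induced by the original $\mathrm{O}(2)$ symmetry. This invariance is precisely the structural hypothesis needed to invoke Lemma~\ref{lem:nearidentity-third}.

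Second, I would apply the purely cubic near-identity transformation $(\Xi_{pqrs},\Pi_{pqrs},\Upsilon_{pqrs})$ from Lemma~\ref{lem:nearidentity-third} to \eqref{eq:ourcase2}. Because that map is identity-plus-cubic, it leaves the quadratic part $C_3 xy,\; D_1 x^2$ untouched, and it eliminates every non-resonant cubic monomial in the symmetry class admitted by the intermediate system. What remains is exactly the resonant list $Z_{2100}|\zeta|^2\zeta$, $Z_{1020}\zeta x^2$, $Z_{1002}\zeta y^2$, $X_{1110}|\zeta|^2 x$, $X_{0030}x^3$, $X_{0012}xy^2$, $Y_{1101}|\zeta|^2 y$, $Y_{0021}x^2y$, $Y_{0003}y^3$ appearing in \eqref{eq:tth}. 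I would then compose the two maps: if $\Phi_2=\mathrm{id}+Q$ is the quadratic map and $\Phi_3=\mathrm{id}+C$ is the cubic one (where $Q$, respectively $C$, is a homogeneous polynomial of degree two, respectively three, in the new variables), then $\Phi_2\circ\Phi_3=\mathrm{id}+Q+C+Q\circ C$. Since $Q\circ C$ starts at order four, its contribution is absorbed into the $\mathcal{O}_4$ remainder, and summing the coefficients of $Q$ (from \eqref{eq:trans_quad}) with those of $C$ (from Lemma~\ref{lem:nearidentity-third}) yields precisely the composite transformation displayed in \eqref{eq:trans_cubic}.

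The main obstacle I anticipate is bookkeeping: when the quadratic substitution is plugged into the quadratic nonlinearity of \eqref{eq:ourcase}, it generates extra cubic contributions that must be added to the original cubic coefficients before the third-order reduction can be read off. These corrections are hidden inside $P^0,P^1,P^2$ in \eqref{eq:ourcase2}, so the \emph{statement} of the final Corollary does not require their explicit computation, only the guarantee that they inherit the symmetry class needed by Lemma~\ref{lem:nearidentity-third}. That guarantee follows from the observation that both substitutions respect the $\mathrm{O}(2)$ action inherited from the reflection symmetry of \eqref{2rd} (see \eqref{o2}), which rules out every monomial outside the prescribed list. Invertibility and smooth dependence on parameters of the composed transformation then follow from the inverse function theorem applied at the origin, where the Jacobian is the identity, completing the proof of Theorem~\ref{thm:1}.
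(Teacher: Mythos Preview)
Your proposal is correct and follows essentially the same approach as the paper: the corollary is obtained by composing the quadratic near-identity transformation of Corollary~\ref{cor:transform-tth} with the cubic near-identity transformation of Lemma~\ref{lem:nearidentity-third}, noting that the cross-terms from the composition are of order four and hence absorbed in the remainder. The paper states this combination without further detail, so your explicit discussion of the composition $\Phi_2\circ\Phi_3$ and of the symmetry-preservation needed to feed \eqref{eq:ourcase2} into Lemma~\ref{lem:nearidentity-third} is, if anything, slightly more thorough than the original.
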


Remark that $Z_{p q r s}, X_{p q r s}$ and $Y_{p q r s}$ include the coefficients in $\mathcal{O}_3$ of the inverse transformation of \eqref{eq:trans_quad}.
Therefore it is complicated to calculate the coefficients directly.
In the following, we compute the coefficients of \eqref{eq:tth}.

Substituting \eqref{eq:trans_cubic} into \eqref{eq:ourcase}, we have 
\begin{align*}
  \dot z &= \lambda_0 (\zeta + \Gamma_{2000} \zeta^{2} + \Gamma_{1100}|\zeta|^{2} + \Gamma_{0200} \bar\zeta^{2} + \Gamma_{0020} x^{2} + \Gamma_{0002} y^{2})\\
  &+ E_1 (\zeta + \Gamma_{2000} \zeta^{2} + \Gamma_{1100} |\zeta|^{2} + \Gamma_{0200} \bar\zeta^{2} + \Gamma_{0020} x^{2} + \Gamma_{0002} y^{2})^2\\
  &+ E_2 (\zeta + \Gamma_{2000} \zeta^{2} + \Gamma_{1100} |\zeta|^{2} + \Gamma_{0200} \bar\zeta^{2} + \Gamma_{0020} x^{2} + \Gamma_{0002} y^{2})\\
  &\times (\bar \zeta + \bar \Gamma_{2000} \bar \zeta^{2} + \bar \Gamma_{1100} |\zeta|^{2} + \bar \Gamma_{0200} \zeta^{2} + \bar \Gamma_{0020} x^{2} + \bar \Gamma_{0002} y^{2})\\
  &+ E_3 (\bar \zeta + \bar \Gamma_{2000} \bar \zeta^{2} + \bar \Gamma_{1100}|\zeta|^{2} + \bar \Gamma_{0200} \zeta^{2} + \bar \Gamma_{0020} x^{2} + \bar \Gamma_{0002} y^{2})^2\\
  &+ E_4 (x + \Theta_{1010} \zeta x + \Theta_{0110} \bar\zeta x)^2
  + E_5 (y + \Lambda_{1001} \zeta y + \Lambda_{0101} \bar\zeta y)^2 \\
  &+ e_1 \zeta^3 + (e_2 \zeta + e_3 \bar \zeta) |\zeta|^2 + e_4 \bar \zeta^3 + (e_5 \zeta + e_6 \bar \zeta) x^2 + (e_7 \zeta + e_8 \bar \zeta) y^2\\
  & + e_9 x^2 y 
  + \mathcal{O}_4.
\end{align*}
Then, differentiating \eqref{eq:trans_cubic} and substituting \eqref{eq:tth}, we also have
\begin{align*}
  \dot z 
  &= \lambda_0 \zeta + Z_{2100} |\zeta|^{2} \zeta + Z_{1020} \zeta x^{2}  + Z_{1002} \zeta y^{2} + 2 \Gamma_{2000} \lambda_0 \zeta^2\\
  & + \Gamma_{1100} (\lambda_0 |\zeta|^2 + \bar\lambda_0 |\zeta|^2) + 2 \Gamma_{0200} \bar\lambda_0 \bar\zeta^2 + 2 \Gamma_{0020} (\mu^+_1 x^2 + C_3 x^2 y)\\
  &+ 2 \Gamma_{0002} (\mu^+_2 y^2 + D_1 x^2 y) + \mathcal{O}_4.
\end{align*}
By the coefficient comparison method, we obtain 
\begin{align*}
Z_{2100} &= 2 E_1 \Gamma_{1100} + E_2 (\bar \Gamma_{1100} + \Gamma_{2000}) + 2 E_3 \bar \Gamma_{0200} + e_2,\\
Z_{1020} &= 2 E_1 \Gamma_{0020} + E_2 \bar \Gamma_{0020} + 2 E_4 \Theta_{1010} + e_5,\\
Z_{1002} &= 2 E_1 \Gamma_{0002} + E_2 \bar \Gamma_{0002} + 2 E_5 \Lambda_{1001} + e_7.
\end{align*}
By the same manner, we have
\begin{align*}
X_{1110} &= H_1 (\Theta_{0110} + \Gamma_{1100}) + \bar H_1 (\Theta_{1010} + \bar \Gamma_{1100}) + H_3,\\
X_{0030} &= H_1 \Gamma_{0020} + \bar H_1 \bar \Gamma_{0020} + c_4,\\
X_{0012} &= H_1 \Gamma_{0002} + \bar H_1 \bar \Gamma_{0002} + c_5,\\
Y_{1101} &= I_1 (\Lambda_{0101} + \Gamma_{1100}) + \bar I_1 (\Lambda_{1001} + \bar \Gamma_{1100}) + I_3,\\
Y_{0021} &= I_1 \Gamma_{0020} + \bar I_1 \bar \Gamma_{0020} + d_4,\\
Y_{0003} &= I_1 \Gamma_{0002} + \bar I_1 \bar \Gamma_{0002} + d_5.
\end{align*}

It should be noted that $X_{1110}, X_{0030}, X_{0012}, Y_{1101}, Y_{0021}, Y_{0003} \in \mathbb{R}.$
Replacing $(\zeta, x, y)$ with $(z_{0}, z_{1}, z_{2}) \in \mathbb{C} \times \mathbb{R}^{2}$ and rewriting $Z_{2100}$, $Z_{1020}$, $Z_{1002}$, $X_{1110}$, $X_{0030}$, $X_{0012}$, $Y_{1101}$, $Y_{0021}$, $Y_{0003}$, $C_3$ and $D_1$ to $\tilde a_{0}$, $\tilde a_{1}$, $\tilde a_{2}$, $\tilde b_{0}$, $\tilde b_{1}$, $\tilde b_{2}$, $\tilde c_{0}$, $\tilde c_{1}$, $\tilde c_{2}$, $\tilde B$ and $\tilde C$, respectively, we finally obtain the normal form for the Turing--Turing--Hopf bifurcation with $\mathrm{O}(2)$ symmetry.

\end{appendices}

\bibliography{sn-bibliography}


\begin{thebibliography}{35}
\ifx \bisbn   \undefined \def \bisbn  #1{ISBN #1}\fi
\ifx \binits  \undefined \def \binits#1{#1}\fi
\ifx \bauthor  \undefined \def \bauthor#1{#1}\fi
\ifx \batitle  \undefined \def \batitle#1{#1}\fi
\ifx \bjtitle  \undefined \def \bjtitle#1{#1}\fi
\ifx \bvolume  \undefined \def \bvolume#1{\textbf{#1}}\fi
\ifx \byear  \undefined \def \byear#1{#1}\fi
\ifx \bissue  \undefined \def \bissue#1{#1}\fi
\ifx \bfpage  \undefined \def \bfpage#1{#1}\fi
\ifx \blpage  \undefined \def \blpage #1{#1}\fi
\ifx \burl  \undefined \def \burl#1{\textsf{#1}}\fi
\ifx \doiurl  \undefined \def \doiurl#1{\url{https://doi.org/#1}}\fi
\ifx \betal  \undefined \def \betal{\textit{et al.}}\fi
\ifx \binstitute  \undefined \def \binstitute#1{#1}\fi
\ifx \binstitutionaled  \undefined \def \binstitutionaled#1{#1}\fi
\ifx \bctitle  \undefined \def \bctitle#1{#1}\fi
\ifx \beditor  \undefined \def \beditor#1{#1}\fi
\ifx \bpublisher  \undefined \def \bpublisher#1{#1}\fi
\ifx \bbtitle  \undefined \def \bbtitle#1{#1}\fi
\ifx \bedition  \undefined \def \bedition#1{#1}\fi
\ifx \bseriesno  \undefined \def \bseriesno#1{#1}\fi
\ifx \blocation  \undefined \def \blocation#1{#1}\fi
\ifx \bsertitle  \undefined \def \bsertitle#1{#1}\fi
\ifx \bsnm \undefined \def \bsnm#1{#1}\fi
\ifx \bsuffix \undefined \def \bsuffix#1{#1}\fi
\ifx \bparticle \undefined \def \bparticle#1{#1}\fi
\ifx \barticle \undefined \def \barticle#1{#1}\fi
\bibcommenthead
\ifx \bconfdate \undefined \def \bconfdate #1{#1}\fi
\ifx \botherref \undefined \def \botherref #1{#1}\fi
\ifx \url \undefined \def \url#1{\textsf{#1}}\fi
\ifx \bchapter \undefined \def \bchapter#1{#1}\fi
\ifx \bbook \undefined \def \bbook#1{#1}\fi
\ifx \bcomment \undefined \def \bcomment#1{#1}\fi
\ifx \oauthor \undefined \def \oauthor#1{#1}\fi
\ifx \citeauthoryear \undefined \def \citeauthoryear#1{#1}\fi
\ifx \endbibitem  \undefined \def \endbibitem {}\fi
\ifx \bconflocation  \undefined \def \bconflocation#1{#1}\fi
\ifx \arxivurl  \undefined \def \arxivurl#1{\textsf{#1}}\fi
\csname PreBibitemsHook\endcsname

\bibitem[\protect\citeauthoryear{Meinhardt}{1982}]{M1982}
\begin{bbook}
\bauthor{\bsnm{Meinhardt}, \binits{H.}}:
\bbtitle{Models of {B}iological {P}attern {F}ormation}.
\bpublisher{Academic Press},
\blocation{London}
(\byear{1982})
\end{bbook}
\endbibitem

\bibitem[\protect\citeauthoryear{Murray}{2002}]{M2002}
\begin{bbook}
\bauthor{\bsnm{Murray}, \binits{J.D.}}:
\bbtitle{Mathematical {B}iology: I, II}.
\bpublisher{Springer},
\blocation{Berlin}
(\byear{2002})
\end{bbook}
\endbibitem

\bibitem[\protect\citeauthoryear{Kondo and Miura}{2010}]{KM}
\begin{barticle}
\bauthor{\bsnm{Kondo}, \binits{S.}},
\bauthor{\bsnm{Miura}, \binits{T.}}:
\batitle{Reaction-diffusion model as a framework for understanding biological
  pattern formation}.
\bjtitle{Science}
\bvolume{329},
\bfpage{1616}--\blpage{1620}
(\byear{2010})
\end{barticle}
\endbibitem

\bibitem[\protect\citeauthoryear{Anma et~al.}{2012}]{ASY}
\begin{barticle}
\bauthor{\bsnm{Anma}, \binits{A.}},
\bauthor{\bsnm{Sakamoto}, \binits{K.}},
\bauthor{\bsnm{Yoneda}, \binits{T.}}:
\batitle{Unstable subsystems cause turing instability}.
\bjtitle{Kodai, Math. J.}
\bvolume{35},
\bfpage{215}--\blpage{247}
(\byear{2012})
\end{barticle}
\endbibitem

\bibitem[\protect\citeauthoryear{Yang and Epstein}{2003}]{YE}
\begin{botherref}
\oauthor{\bsnm{Yang}, \binits{L.F.}},
\oauthor{\bsnm{Epstein}, \binits{I.R.}}:
Oscillatory turing patterns in reaction-diffusion systems with two coupled
  layers.
Phys. Rev. Lett.
\textbf{90}(17)
(2003)
\end{botherref}
\endbibitem

\bibitem[\protect\citeauthoryear{Iida et~al.}{2006}]{IMN}
\begin{barticle}
\bauthor{\bsnm{Iida}, \binits{M.}},
\bauthor{\bsnm{Mimura}, \binits{M.}},
\bauthor{\bsnm{Ninomiya}, \binits{H.}}:
\batitle{Diffusion, cross-diffusion and competitive interaction}.
\bjtitle{J. Math. Biol.}
\bvolume{53},
\bfpage{617}--\blpage{641}
(\byear{2006})
\end{barticle}
\endbibitem

\bibitem[\protect\citeauthoryear{Funaki et~al.}{2012}]{FIMU}
\begin{barticle}
\bauthor{\bsnm{Funaki}, \binits{T.}},
\bauthor{\bsnm{Izuhara}, \binits{H.}},
\bauthor{\bsnm{Mimura}, \binits{M.}},
\bauthor{\bsnm{Urabe}, \binits{C.}}:
\batitle{A link between microscopic and macroscopic models of self-organized
  aggregation}.
\bjtitle{Netw. Heterog. Media}
\bvolume{7},
\bfpage{705}--\blpage{740}
(\byear{2012})
\end{barticle}
\endbibitem

\bibitem[\protect\citeauthoryear{Ninomiya et~al.}{2017}]{NTY}
\begin{barticle}
\bauthor{\bsnm{Ninomiya}, \binits{H.}},
\bauthor{\bsnm{Tanaka}, \binits{Y.}},
\bauthor{\bsnm{Yamamoto}, \binits{H.}}:
\batitle{Reaction, diffusion and non-local interaction}.
\bjtitle{J. Math. Biol.}
\bvolume{75},
\bfpage{1203}--\blpage{1233}
(\byear{2017})
\end{barticle}
\endbibitem

\bibitem[\protect\citeauthoryear{Izuhara et~al.}{2021}]{IMW}
\begin{barticle}
\bauthor{\bsnm{Izuhara}, \binits{H.}},
\bauthor{\bsnm{H.}, \binits{M.}},
\bauthor{\bsnm{Wu}, \binits{C.-H.}}:
\batitle{The formation of spreading front: the singular limit of
  three-component reaction-diffusion models}.
\bjtitle{J. Math. Biol.}
\bvolume{82},
\bfpage{1}--\blpage{33}
(\byear{2021})
\end{barticle}
\endbibitem

\bibitem[\protect\citeauthoryear{Iida et~al.}{2018}]{INY}
\begin{barticle}
\bauthor{\bsnm{Iida}, \binits{M.}},
\bauthor{\bsnm{Ninomiya}, \binits{H.}},
\bauthor{\bsnm{Yamamoto}, \binits{H.}}:
\batitle{A review on reaction-diffusion approximation}.
\bjtitle{J. Elliptic Parabol. Equ.}
\bvolume{4},
\bfpage{565}--\blpage{600}
(\byear{2018})
\end{barticle}
\endbibitem

\bibitem[\protect\citeauthoryear{Crandall and Rabinowitz}{1971}]{CR}
\begin{barticle}
\bauthor{\bsnm{Crandall}, \binits{M.G.}},
\bauthor{\bsnm{Rabinowitz}, \binits{P.H.}}:
\batitle{Bifurcation from simple eigenvalues}.
\bjtitle{J. Functional Analysis}
\bvolume{8},
\bfpage{321}--\blpage{340}
(\byear{1971})
\end{barticle}
\endbibitem

\bibitem[\protect\citeauthoryear{Fujii et~al.}{1982}]{FMN}
\begin{barticle}
\bauthor{\bsnm{Fujii}, \binits{H.}},
\bauthor{\bsnm{Mimura}, \binits{M.}},
\bauthor{\bsnm{Nishiura}, \binits{Y.}}:
\batitle{A picture of the global bifurcation diagram in ecological interacting
  and diffusing systems}.
\bjtitle{Physica D}
\bvolume{5},
\bfpage{1}--\blpage{42}
(\byear{1982})
\end{barticle}
\endbibitem

\bibitem[\protect\citeauthoryear{Armbruster et~al.}{1988}]{AGH}
\begin{barticle}
\bauthor{\bsnm{Armbruster}, \binits{D.}},
\bauthor{\bsnm{Guckenheimer}, \binits{J.}},
\bauthor{\bsnm{Holmes}, \binits{P.}}:
\batitle{Heteroclinic cycles and modulated travelling waves in systems with
  o(2) symmetry}.
\bjtitle{Physica D}
\bvolume{29},
\bfpage{257}--\blpage{282}
(\byear{1988})
\end{barticle}
\endbibitem

\bibitem[\protect\citeauthoryear{Izuhara and Kobayashi}{2021}]{IK}
\begin{barticle}
\bauthor{\bsnm{Izuhara}, \binits{H.}},
\bauthor{\bsnm{Kobayashi}, \binits{S.}}:
\batitle{Spatio-temporal coexistence in the cross-diffusion competition
  system}.
\bjtitle{Discrete Contin. Dyn. Syst. S}
\bvolume{14},
\bfpage{919}--\blpage{933}
(\byear{2021})
\end{barticle}
\endbibitem

\bibitem[\protect\citeauthoryear{Guckenheimer and Holmes}{1983}]{GH}
\begin{bbook}
\bauthor{\bsnm{Guckenheimer}, \binits{J.}},
\bauthor{\bsnm{Holmes}, \binits{P.}}:
\bbtitle{Nonlinear {O}scillations, {D}ynamical {S}ystems, and {B}ifurcations of
  {V}ector {F}ields}.
\bpublisher{Springer},
\blocation{New York}
(\byear{1983})
\end{bbook}
\endbibitem

\bibitem[\protect\citeauthoryear{Drubi et~al.}{2011}]{DIR}
\begin{barticle}
\bauthor{\bsnm{Drubi}, \binits{F.}},
\bauthor{\bsnm{Ib\'{a}\~nez}, \binits{S.}},
\bauthor{\bsnm{Rodr\'iguez}, \binits{J.A.}}:
\batitle{Hopf-pitchfork singularities in coupled systems}.
\bjtitle{Physica D}
\bvolume{240},
\bfpage{825}--\blpage{840}
(\byear{2011})
\end{barticle}
\endbibitem

\bibitem[\protect\citeauthoryear{Ogawa}{2007}]{O}
\begin{botherref}
\oauthor{\bsnm{Ogawa}, \binits{T.}}:
Degenerate hopf instability in oscillatory reaction-diffusion equations.
DCDS Supplements,
784--793
(2007)
\end{botherref}
\endbibitem

\bibitem[\protect\citeauthoryear{Ogawa and Okuda}{2012}]{OO}
\begin{barticle}
\bauthor{\bsnm{Ogawa}, \binits{T.}},
\bauthor{\bsnm{Okuda}, \binits{T.}}:
\batitle{Oscillatory dynamics in a reaction-diffusion system in the presence of
  0:1:2 resonance}.
\bjtitle{Networks \& Heretogeneous Media}
\bvolume{7},
\bfpage{893}--\blpage{926}
(\byear{2012})
\end{barticle}
\endbibitem

\bibitem[\protect\citeauthoryear{Kobayashi and Sakamoto}{2019}]{KS}
\begin{barticle}
\bauthor{\bsnm{Kobayashi}, \binits{S.}},
\bauthor{\bsnm{Sakamoto}, \binits{T.O.}}:
\batitle{Hopf bifurcation and hopf-pitchfork bifurcation in an
  integro-differential reaction-diffusion system}.
\bjtitle{Tokyo J. Math.}
\bvolume{42},
\bfpage{121}--\blpage{183}
(\byear{2019})
\end{barticle}
\endbibitem

\bibitem[\protect\citeauthoryear{Kuramoto}{1978}]{K}
\begin{barticle}
\bauthor{\bsnm{Kuramoto}, \binits{Y.}}:
\batitle{Diffusion-induced chaos in reaction systems}.
\bjtitle{Prog. Theor. Phys. Suppl.}
\bvolume{64},
\bfpage{346}--\blpage{367}
(\byear{1978})
\end{barticle}
\endbibitem

\bibitem[\protect\citeauthoryear{Maginu}{1979}]{M}
\begin{barticle}
\bauthor{\bsnm{Maginu}, \binits{K.}}:
\batitle{Stability of spatially homogeneous periodic solutions of
  reaction-diffusion equations}.
\bjtitle{J. Differ. Equations}
\bvolume{31},
\bfpage{130}--\blpage{138}
(\byear{1979})
\end{barticle}
\endbibitem

\bibitem[\protect\citeauthoryear{Ricard and Mischler}{2009}]{RM}
\begin{barticle}
\bauthor{\bsnm{Ricard}, \binits{M.R.}},
\bauthor{\bsnm{Mischler}, \binits{S.}}:
\batitle{Turing instabilities at hopf bifurcation}.
\bjtitle{J. Nonlinear Sci.}
\bvolume{19},
\bfpage{467}--\blpage{496}
(\byear{2009})
\end{barticle}
\endbibitem

\bibitem[\protect\citeauthoryear{Kuwamura and Izuhara}{2017}]{KI}
\begin{botherref}
\oauthor{\bsnm{Kuwamura}, \binits{M.}},
\oauthor{\bsnm{Izuhara}, \binits{H.}}:
Diffusion-driven destabilization of spatially homogeneous limit cycles in
  reaction-diffusion systems.
Chaos
\textbf{27}(3)
(2017)
\end{botherref}
\endbibitem

\bibitem[\protect\citeauthoryear{Uecker}{2019}]{U-2019}
\begin{barticle}
\bauthor{\bsnm{Uecker}, \binits{H.}}:
\batitle{Hopf bifurcation and time periodic orbits with pde2path --algorithms
  and applications}.
\bjtitle{Commun. Comput. Phys.}
\bvolume{25},
\bfpage{812}--\blpage{852}
(\byear{2019})
\end{barticle}
\endbibitem

\bibitem[\protect\citeauthoryear{Uecker}{2021}]{U-2021}
\begin{bbook}
\bauthor{\bsnm{Uecker}, \binits{H.}}:
\bbtitle{Numerical {C}ontinuation and {B}ifurcation in {N}onlinear {PDE}s}.
\bpublisher{Society for Industrial and Applied Mathematics},
\blocation{Philadelphia}
(\byear{2021})
\end{bbook}
\endbibitem

\bibitem[\protect\citeauthoryear{Uecker}{2022}]{U-2022}
\begin{barticle}
\bauthor{\bsnm{Uecker}, \binits{H.}}:
\batitle{Continuation and bifurcation in nonlinear pdes--algorithms,
  applications and experiments}.
\bjtitle{Jahresber. Dtsch. Math.-Ver.}
\bvolume{124},
\bfpage{43}--\blpage{80}
(\byear{2022})
\end{barticle}
\endbibitem

\bibitem[\protect\citeauthoryear{Uecker et~al.}{2014}]{UWR}
\begin{barticle}
\bauthor{\bsnm{Uecker}, \binits{H.}},
\bauthor{\bsnm{Wetzel}, \binits{D.}},
\bauthor{\bsnm{Rademacher}, \binits{J.D.}}:
\batitle{pde2path - a matlab package for continuation and bifurcation in 2d
  elliptic systems}.
\bjtitle{Numer. Math. Theory Methods Appl.}
\bvolume{7},
\bfpage{58}--\blpage{106}
(\byear{2014})
\end{barticle}
\endbibitem

\bibitem[\protect\citeauthoryear{Mimura and Murray}{1978}]{MM}
\begin{barticle}
\bauthor{\bsnm{Mimura}, \binits{M.}},
\bauthor{\bsnm{Murray}, \binits{J.D.}}:
\batitle{On a diffusive prey--predator model which exhibits patchiness}.
\bjtitle{J. Theor. Biol.}
\bvolume{75},
\bfpage{249}--\blpage{262}
(\byear{1978})
\end{barticle}
\endbibitem

\bibitem[\protect\citeauthoryear{Haragus and Iooss}{2011}]{HI}
\begin{bbook}
\bauthor{\bsnm{Haragus}, \binits{M.}},
\bauthor{\bsnm{Iooss}, \binits{G.}}:
\bbtitle{Local {B}ifurcations, {C}enter {M}anifolds, and {N}ormal {F}orms in
  {I}nfinite-{D}imentional {D}ynamics {S}ystems}.
\bpublisher{Springer},
\blocation{London}
(\byear{2011})
\end{bbook}
\endbibitem

\bibitem[\protect\citeauthoryear{Kuznetsov}{1997}]{Kuz}
\begin{bbook}
\bauthor{\bsnm{Kuznetsov}, \binits{Y.A.}}:
\bbtitle{Elements of {A}pplied {B}ifurcation {T}heory, 3rd ed.}
\bpublisher{Springer},
\blocation{New York}
(\byear{1997})
\end{bbook}
\endbibitem

\bibitem[\protect\citeauthoryear{Dangelmayr}{1986}]{D}
\begin{barticle}
\bauthor{\bsnm{Dangelmayr}, \binits{G.}}:
\batitle{Steady-state mode interactions in the presence of o(2)-symmetry}.
\bjtitle{Dyn. Stab. Syst.}
\bvolume{1}(\bissue{2}),
\bfpage{159}--\blpage{185}
(\byear{1986})
\end{barticle}
\endbibitem

\bibitem[\protect\citeauthoryear{Porter and Knobloch}{2001}]{PK}
\begin{barticle}
\bauthor{\bsnm{Porter}, \binits{J.}},
\bauthor{\bsnm{Knobloch}, \binits{E.}}:
\batitle{New type of complex dynamics in the 1:2 spatial resonance}.
\bjtitle{Physica D}
\bvolume{159},
\bfpage{125}--\blpage{154}
(\byear{2001})
\end{barticle}
\endbibitem

\bibitem[\protect\citeauthoryear{Smith et~al.}{2005}]{SMH}
\begin{barticle}
\bauthor{\bsnm{Smith}, \binits{T.R.}},
\bauthor{\bsnm{Moehlis}, \binits{J.}},
\bauthor{\bsnm{Holmes}, \binits{P.}}:
\batitle{Heteroclinic cycles and periodic orbits for the o(2)-equivariant 0:1:2
  mode interaction}.
\bjtitle{Physica D}
\bvolume{211},
\bfpage{347}--\blpage{376}
(\byear{2005})
\end{barticle}
\endbibitem

\bibitem[\protect\citeauthoryear{Nishiura and Ueyama}{2001}]{NU}
\begin{barticle}
\bauthor{\bsnm{Nishiura}, \binits{Y.}},
\bauthor{\bsnm{Ueyama}, \binits{D.}}:
\batitle{Spatio-temporal chaos for the gray--scott model}.
\bjtitle{Physica D}
\bvolume{150},
\bfpage{137}--\blpage{162}
(\byear{2001})
\end{barticle}
\endbibitem

\bibitem[\protect\citeauthoryear{Painter and Hillen}{2011}]{PH}
\begin{barticle}
\bauthor{\bsnm{Painter}, \binits{K.J.}},
\bauthor{\bsnm{Hillen}, \binits{T.}}:
\batitle{Spatio-temporal chaos in a chemotaxis model}.
\bjtitle{Physica D}
\bvolume{240},
\bfpage{363}--\blpage{375}
(\byear{2011})
\end{barticle}
\endbibitem

\end{thebibliography}

\end{document}